\title[Boundary controllability of 3D ideal MHD]{Exact boundary controllability of the 3D incompressible ideal MHD system}
\author[I.~Kukavica]{Igor Kukavica}
\address{Department of Mathematics, University of Southern California, Los Angeles, CA 90089}
\email{kukavica@usc.edu}
\author[W.~O\.za\'nski]{Wojciech O\.za\'nski}
\address{Department of Mathematics, Florida State University, Tallahassee, FL 32306}
\email{wozanski@fsu.edu}
  \chardef\forshowkeys=0
  \chardef\refcheck=0
  \chardef\showllabel=0
  \chardef\sketches=0
\begin{document}
\def\YY{X}
\def\na{\nabla }
\def\oo{{\Omega'}}
\def\ooo{\overline{\Omega'}}
\def\wo{{\Omega}}
\def\owo{\overline{\Omega}}
\def\OO{\mathcal O}
\def\SS{\mathbb S}
\def\CC{\mathbb C}
\def\RR{\mathbb R}
\def\TT{\mathbb T}
\def\ZZ{\mathbb Z}
\def\HH{\mathbb H}
\def\RSZ{\mathcal R}
\def\LL{\mathcal L}
\def\SL{\LL^1}
\def\ZL{\LL^\infty}
\def\GG{\mathcal G}
\def\tt{\langle t\rangle}
\def\erf{\mathrm{Erf}}
\def\mgt#1{\textcolor{magenta}{#1}}
\def\ff{\rho}
\def\gg{G}
\def\sqrtnu{\sqrt{\nu}}
\def\ww{w}
\def\ft#1{#1_\xi}
\def\lec{\lesssim}
\def\gec{\gtrsim}
\renewcommand*{\Re}{\ensuremath{\mathrm{{\mathbb R}e\,}}}
\renewcommand*{\Im}{\ensuremath{\mathrm{{\mathbb I}m\,}}}
\ifnum\showllabel=1
 \def\llabel#1{\marginnote{\color{lightgray}\rm\small(#1)}[-0.0cm]\notag}
\else
 \def\llabel#1{\notag}
\fi

\newcommand{\eqnb}{\begin{equation}}
\newcommand{\eqne}{\end{equation}}

\newcommand{\nn}{\mathsf{n}}
\newcommand{\norm}[1]{\left\|#1\right\|}
\newcommand{\nnorm}[1]{\lVert #1\rVert}
\newcommand{\abs}[1]{\left|#1\right|}
\newcommand{\NORM}[1]{|\!|\!| #1|\!|\!|}
\newtheorem{theorem}{Theorem}[section]
\newtheorem{Theorem}{Theorem}[section]
\newtheorem{corollary}[theorem]{Corollary}
\newtheorem{Corollary}[theorem]{Corollary}
\newtheorem{proposition}[theorem]{Proposition}
\newtheorem{Proposition}[theorem]{Proposition}
\newtheorem{Lemma}[theorem]{Lemma}
\newtheorem{lemma}[theorem]{Lemma}
\theoremstyle{definition}
\newtheorem{definition}{Definition}[section]
\newtheorem{Remark}[Theorem]{Remark}
\def\theequation{\thesection.\arabic{equation}}
\numberwithin{equation}{section}
\definecolor{mygray}{rgb}{.6,.6,.6}
\definecolor{myblue}{rgb}{9, 0, 1}
\definecolor{colorforkeys}{rgb}{1.0,0.0,0.0}
\newlength\mytemplen
\newsavebox\mytempbox
\makeatletter
\newcommand\mybluebox{%
    \@ifnextchar[
       {\@mybluebox}%
       {\@mybluebox[0pt]}}
\def\@mybluebox[#1]{%
    \@ifnextchar[
       {\@@mybluebox[#1]}%
       {\@@mybluebox[#1][0pt]}}
\def\@@mybluebox[#1][#2]#3{
    \sbox\mytempbox{#3}%
    \mytemplen\ht\mytempbox
    \advance\mytemplen #1\relax
    \ht\mytempbox\mytemplen
    \mytemplen\dp\mytempbox
    \advance\mytemplen #2\relax
    \dp\mytempbox\mytemplen
    \colorbox{myblue}{\hspace{1em}\usebox{\mytempbox}\hspace{1em}}}
\makeatother
\def\bnew{\colr }
\def\enew{\colb {}}
\def\bold{\colu }
\def\eold{\colb {}}
\def\phiij{\phi_{ij}}
\def\un{u^{(n)}}
\def\Bn{B^{(n)}}
\def\unp{u^{(n+1)}}
\def\unm{u^{(n-1)}}
\def\Bnp{B^{(n+1)}}
\def\Bnm{B^{(n-1)}}
\def\pn{p^{(n)}}
\def\pnm{p^{(n-1)}}

\def\ee{\epsilon}
\def\eeo{\overline{\epsilon}}
\def\eet{\bar\epsilon}
\def\us{U}
\def\rr{r}
\def\weaks{\text{\,\,\,\,\,\,weakly-* in }}
\def\inn{\text{\,\,\,\,\,\,in }}
\def\cof{\mathop{\rm cof\,}\nolimits}
\def\Dn{\frac{\partial}{\partial N}}
\def\Dnn#1{\frac{\partial #1}{\partial N}}
\def\tdb{\tilde{b}}
\def\tda{b}
\def\qqq{u}
\def\lat{\Delta_2}
\def\biglinem{\vskip0.5truecm\par==========================\par\vskip0.5truecm}
\def\inon#1{\hbox{\ \ \ \ \ \ \ }\hbox{#1}}                
\def\onon#1{\inon{on~$#1$}}
\def\inin#1{\inon{in~$#1$}}
\def\FF{F}
\def\andand{\text{\indeq and\indeq}}
\def\ww{w(y)}
\def\startnewsection#1#2{\newpage \section{#1}\label{#2}\setcounter{equation}{0}}   
\def\nnewpage{ }
\def\sgn{\mathop{\rm sgn\,}\nolimits}    
\def\Tr{\mathop{\rm Tr}\nolimits}    
\def\div{\mathop{\rm div}\nolimits}
\def\curl{\mathop{\rm curl}\nolimits}
\def\dist{\mathop{\rm dist}\nolimits}  
\def\supp{\mathop{\rm supp}\nolimits}
\def\indeq{\quad{}}           
\def\period{.}                       
\def\semicolon{\,;}                  
\def\colr{\color{red}}
\def\colrr{\color{black}}
\def\colb{\color{black}}
\def\coly{\color{lightgray}}
\definecolor{colorgggg}{rgb}{0.1,0.5,0.3}
\definecolor{colorllll}{rgb}{0.0,0.7,0.0}
\definecolor{colorhhhh}{rgb}{0.3,0.75,0.4}
\definecolor{colorpppp}{rgb}{0.7,0.0,0.2}
\definecolor{coloroooo}{rgb}{0.45,0.0,0.0}
\definecolor{colorqqqq}{rgb}{0.1,0.7,0}
\def\colg{\color{colorgggg}}
\def\collg{\color{colorllll}}
\def\coleo{\color{colorpppp}}
\def\cole{\color{black}}
\def\colu{\color{blue}}
\def\colc{\color{colorhhhh}}
\def\colW{\colb}   
\definecolor{coloraaaa}{rgb}{0.6,0.6,0.6}
\def\colw{\color{coloraaaa}}
\def\comma{ {\rm ,\qquad{}} }            
\def\commaone{ {\rm ,\quad{}} }          
\def\cmi#1{{\color{red}IK: \hbox{\bf ~#1~}}}
\def\nts#1{{\color{red}\hbox{\bf ~#1~}}} 
\def\ntsf#1{\footnote{\color{colorgggg}\hbox{#1}}} 
\def\blackdot{{\color{red}{\hskip-.0truecm\rule[-1mm]{4mm}{4mm}\hskip.2truecm}}\hskip-.3truecm}
\def\bluedot{{\color{blue}{\hskip-.0truecm\rule[-1mm]{4mm}{4mm}\hskip.2truecm}}\hskip-.3truecm}
\def\purpledot{{\color{colorpppp}{\hskip-.0truecm\rule[-1mm]{4mm}{4mm}\hskip.2truecm}}\hskip-.3truecm}
\def\greendot{{\color{colorgggg}{\hskip-.0truecm\rule[-1mm]{4mm}{4mm}\hskip.2truecm}}\hskip-.3truecm}
\def\cyandot{{\color{cyan}{\hskip-.0truecm\rule[-1mm]{4mm}{4mm}\hskip.2truecm}}\hskip-.3truecm}
\def\reddot{{\color{red}{\hskip-.0truecm\rule[-1mm]{4mm}{4mm}\hskip.2truecm}}\hskip-.3truecm}
\def\tdot{{\color{green}{\hskip-.0truecm\rule[-.5mm]{6mm}{3mm}\hskip.2truecm}}\hskip-.1truecm}
\def\gdot{\greendot}
\def\bdot{\bluedot}
\def\pdot{\purpledot}
\def\ydot{\cyandot}
\def\rdot{\cyandot}
\def\fractext#1#2{{#1}/{#2}}
\def\ii{\hat\imath}
\def\fei#1{\textcolor{blue}{#1}}
\def\vlad#1{\textcolor{cyan}{#1}}
\def\igor#1{\text{{\textcolor{colorqqqq}{#1}}}}
\def\igorf#1{\footnote{\text{{\textcolor{colorqqqq}{#1}}}}}
\def\Sl{S_{\text l}}
\def\Sll{\bar S_{\text l}}
\def\Sh{S_{\text h}}
\newcommand{\p}{\partial}
\newcommand{\os}{{\overline{S}}}
\newcommand{\oos}{{\overline{\overline{S}}}}
\newcommand{\low}{\mathrm{l}}
\newcommand{\high}{\mathrm{h}}
\newcommand{\UE}{U^{\rm E}}
\newcommand{\PE}{P^{\rm E}}
\newcommand{\KP}{K_{\rm P}}
\newcommand{\uNS}{u^{\rm NS}}
\newcommand{\vNS}{v^{\rm NS}}
\newcommand{\pNS}{p^{\rm NS}}
\newcommand{\omegaNS}{\omega^{\rm NS}}
\newcommand{\uE}{u^{\rm E}}
\newcommand{\vE}{v^{\rm E}}
\newcommand{\pE}{p^{\rm E}}
\newcommand{\omegaE}{\omega^{\rm E}}
\newcommand{\ua}{u_{\rm   a}}
\newcommand{\va}{v_{\rm   a}}
\newcommand{\omegaa}{\omega_{\rm   a}}
\newcommand{\ue}{u_{\rm   e}}
\newcommand{\ve}{v_{\rm   e}}
\newcommand{\omegae}{\omega_{\rm e}}
\newcommand{\omegaeic}{\omega_{{\rm e}0}}
\newcommand{\ueic}{u_{{\rm   e}0}}
\newcommand{\veic}{v_{{\rm   e}0}}
\newcommand{\vp}{v^{\rm P}}
\newcommand{\tup}{{\tilde u}^{\rm P}}
\newcommand{\bvp}{{\bar v}^{\rm P}}
\newcommand{\omegap}{\omega^{\rm P}}
\newcommand{\tomegap}{\tilde \omega^{\rm P}}
\renewcommand{\vp}{v^{\rm P}}
\renewcommand{\omegap}{\Omega^{\rm P}}
\renewcommand{\tomegap}{\omega^{\rm P}}
\renewcommand{\d}{\mathrm{d}}
\newcommand{\R}{\mathbb{R}}
\newcommand{\C}{\mathbb{C}}
\newcommand{\N}{\mathbb{N}}
\renewcommand{\tt}{{\widetilde{\tau}}}
\newcommand{\im}{\mathrm{Im}}
\newcommand{\re}{\mathrm{Re}}
\newcommand{\id}{\mathrm{id}}

\newcommand{\tX}{{\widetilde{X}}}
\newcommand{\tY}{{\widetilde{Y}}}
\newcommand{\oY}{{\overline{Y}}}
\newcommand{\pmf}{{\psi^{(m)}_{\rm f}}}
\renewcommand{\pmb}{{\psi^{(m)}_{\rm b}}}

\begin{abstract}
We consider the three-dimensional ideal MHD system on a domain $\Omega' \subset \R^3$ with a part $\Gamma$ of the boundary~$\p \Omega$, where we prescribe both $u\cdot \nn$ and $b\cdot \nn$, while $u\cdot \nn = b\cdot \nn =0$ on $\p \Omega' \setminus \Gamma$. We prove the boundary controllability of the system, namely that we can prescribe the boundary data such that the unique solution of the system with initial state $(u_0,b_0)$ achieves another state $(u_1,b_1)$ in finite time, where $u_0,b_0,u_1,b_1$ are arbitrary divergence-free vector fields satisfying impermeability boundary condition which are extendable to vector fields with the same properties on any bounded domain obtained by extension of $\Omega'$ via $\Gamma$. As a byproduct, we give the first local well-posedness proof of incompressible, ideal MHD system, which does not use Elsasser variables and is thus applicable to any bounded domain with sufficient Sobolev regularity. We also provide a new and simple proof of the $2$D controllability.
\end{abstract}
\colb
\maketitle
\setcounter{tocdepth}{2}

\section{Introduction}\label{sec_intro}
Let $\Omega'\subset \R^3$ be a  bounded $H^{r+2}$ domain ($r\geq 3$), and let $\Gamma\subset \p \Omega$. We are concerned with the boundary controllability for the ideal magnetohydrodynamic (MHD) system on~$\Omega'$.  Namely, given $(v_0,b_0),(v_1,b_1)\in H^r$ satisfying the divergence-free condition, we look for $T>0$ and $k,l\in C([0,T];H^{r-1/2} (\Gamma ))$  such that there exists a unique solution $(v,b)$ to the system
\eqnb\label{mhd}
\begin{split}
\p_t v + v\cdot \nabla v - b\cdot \nabla b + \nabla p &=0,\\
\p_t b + v\cdot \nabla b - b\cdot \nabla v &=0,\\
\div \, v=\div\, b&=0 \quad \text{ in }  \Omega',\\
v\cdot \nn = b\cdot \nn &=0 \quad \text{ on } \p\Omega' \setminus \Gamma,\\
v\cdot \nn = k,\quad  b\cdot \nn &=l \quad \text{ on } \Gamma,
\end{split}
\eqne
where $\nn$ denotes the unit outward normal vector to $\p \Omega'$, which takes the state $(v_0,b_0)$ to $(v_1,b_1)$ in a given time~$T$.   The study of boundary controllability in inviscid models goes back to the work of Coron \cite{C1,C2,C3,C4}, who showed the boundary controllability for the $2$D incompressible Euler equation (see also the book \cite{C6}), and to the subsequent result of Glass \cite{G1,G2} regarding the $3$D incompressible Euler equation. These results answered a question posed in the paper of 
J.-L.~Lions~\cite{L3} on the relation between turbulence and controllability; see Section~7 therein and \cite[Section~4]{L1}.

We note that the ideal MHD system \eqref{mhd} is more challenging to treat than the incompressible Euler equations because (1) we have to deal with the interaction between two variables, $u$ and $b$, (2) pressure is more challenging to work with, particularly regarding the divergence-free constraint for $b$ (no pressure term in the equation for~$b$; see the discussion below), and (3) the local well-posedness of the perturbed MHD system on a general bounded domain is not clear due to the use of Elsasser variables (see the discussion below). 

We also note that while the results of Coron and Glass prove that the boundary controllability and null-controllability are equivalent in the case of the Euler equations, they are not for the MHD equations. In fact, as shown in \cite{KNV}, not every state is null-controllable, and, in the case of a rectangle, the paper \cite{KNV} provides a necessary and sufficient condition for being able to go from $(v_0,b_0)$ to~$(v_1,b_1)$.

The first result on controllability of the MHD system is due to  Rissel and Wang~\cite{RW1}, who showed the null-controllability for any state, however, by adding an additional small magnetic
force, i.e., by adding a forcing term in~\eqref{mhd}$_2$.
The difficulty in obtaining the null-controllability stems from the fact that the equation for the magnetic field does not contain the pressure term, so maintaining the divergence-free constraint for $b$ is unclear.

 Initially, the problem was solved in \cite{KNV} for the rectangle  $\Omega' = [0,1]^2 \subset \R^2$ with the controlled part of the boundary $\Gamma \coloneqq \{ (x,y) \colon x\in \{ 0,1\}, \,y\in (0,1) \}$. In order to describe the main idea, we note that if
\eqnb\label{knv_cond}
\int_{[0,1]^2} b_0\cdot e_1  = \int_{[0,1]^2} b_1\cdot e_1 = 0,
\eqne 
then one can reduce the boundary controllability problem on $\Omega'$ into a null-controllability of the problem extended to an infinite channel $\Omega \coloneqq \{ (x,y) \colon x\in \R , y\in (0,1) \}$ by considering extensions of $u_0$, $b_0$ into $\Omega$ such that $b_0$ is compactly supported in~$\Omega$. In fact, such extension of $b_0$ is possible if and only if \eqref{knv_cond} holds. In order to see this, one can first observe that, by the divergence-free condition, $\div\,b_0=0$, there exists a stream function $\Psi_0$ such that $b_0 = (-\p_2,\p_1)\Psi_0$. Consequently, the particle trajectories of the vector field $b_0$ are level sets of~$\Psi_0$. Thus, if $b_0$ is extendible to a compactly supported, divergence-free vector field on $\Omega$, then the top and bottom parts of $\p\Omega'$ must be part of the same streamline of $b_0$, and so the values of $\Psi_0$ there must be equal. Hence, for all $x\in (0,1)$,
\[
0= \Psi_0 (x,1) - \Psi_0 (x,0) = \int_0^1 \left( \Psi_0 (x',1) - \Psi_0 (x',0) \right) \,\d x' = \int_{\Omega'} \p_2 \Psi_0 = - \int_{\Omega'} b_0\cdot e_1 ,
\]
which explains the necessity of the compatibility condition~\eqref{knv_cond}. 

We note that we can assume that $\| v_0 \|_{H^r}$ and $\| b_0 \|_{H^r}$ are arbitrarily small and that $T>0$ is arbitrarily large, due to a scaling property of the ideal MHD system (see Step~2 in Section~\ref{sec_3d}, for example). The strategy for the 2D controllability then follows from the following two observations. The first is that there exist many nontrivial 2D Euler flows on $\Omega$ such as shear flows 
\eqnb\label{shear}
U=h(t) e_1 \qquad \text{ for any }h\colon (0,\infty ) \to \R.
\eqne
Thus one can think of a scenario where one considers $v=u+\us$, where $h(t)$ is a smooth function such that $h(0)=0$, and $\| u_0\|_{H^r}$ is taken small so that particle trajectories of $u(t)+\us (t) $ remain arbitrarily close to the particle trajectories of~$\us$. The second observation is that the support of $b$ is transported by $u(t)+\us (t)$. This is clear from the second equation in \eqref{mhd}, which becomes a transport equation for $b$ on $\{ b=0 \}$. Using these two observations, it is clear that $\supp\, b(\cdot ,t )$ will be transported away from $\Omega'$ in finite time, which gives the desired null-controllability result of the magnetic field $b$, see Figure~\ref{fig_knv_sketch} for a sketch. The velocity field $v$ can be then brought to $0$ using the theory of exact boundary controllability of the 2D Euler equations~\cite{C1,C2,C3,C4}.\\

\includegraphics[width=0.9\textwidth]{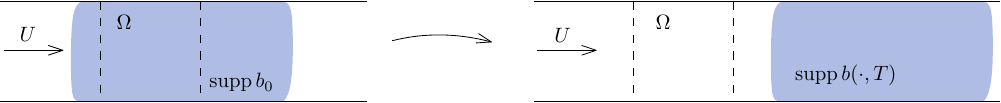}
  \captionof{figure}{A sketch of the shear flow method used by~\cite{KNV}.}\label{fig_knv_sketch} 

With this strategy in mind, the main difficulty obtaining the null-controllability is thus a local well-posedness result for~\eqref{mhd}, perturbed around a given 2D Euler flow~$U$. To this end, we write $(v,b)=(u+\us , b)$, and we use the Elsasser transformation,  
\eqnb\label{els}
z_1 \coloneqq u+b,\qquad z_2 \coloneqq u-b
.
\eqne
The ideal MHD system \eqref{mhd} for $u+\us$, $b$ then becomes
\eqnb\label{mhd_els}
\begin{split}
\p_t z_1 + (z_2 + \us )\cdot \nabla z_1 + \nabla q_1 &= 0,\\
\p_t z_2 + (z_1 + \us )\cdot \nabla z_2 + \nabla q_2 &= 0 ,\\
\div\, z_1=\div z_2 &=0,
\end{split}
\eqne
with appropriate boundary and initial conditions. To obtain the local well-posedness of the Elsasser system \eqref{mhd_els}, one can, for example, set $\omega_i \coloneqq \nabla^\perp \cdot z_i$, $i=1,2$ to obtain a ``vorticity-Elsasser-MHD'' system, from which $(z_1,z_2)$ can be recovered using De Rham's theorem. This is analogous to the vorticity  formulation of the Euler equations; see \cite[(2.11) and Lemma~2.4]{KNV} for details. We emphasize that the
use of Elsasser variables \eqref{els} is a well-known trick, which was originally introduced by Elsasser \cite{E}; however,  there is a difficulty in returning from the transformed system to the original one. In fact, given $z_1,z_2$ inverting the Elsasser transformation \eqref{els} only guarantees that $b$ satisfies the second equation in \eqref{mhd} with an additional pressure term $\nabla (q_1-q_2)/2$. Thus, in order to recover a solution $(u+\us,b)$ to the MHD system \eqref{mhd} one must show that $q_1=q_2$. This is possible in the case of the rectangle $\Omega'$ by noting that, at each time, $\p_2 (q_1-q_2)=0$ for $y\in \{ 0,1\}$, $\int_{\Omega'} (q_1 -q_2) =0$, and that, applying the divergence operator to the first two equations of \eqref{mhd_els} gives
\[
-\Delta (q_1-q_2)= \div  \left( ( z_2 +\us ) \cdot \nabla z_1 \right) - \div  \left( ( z_1 +\us ) \cdot \nabla z_2 \right) =\div  \left( \us  \cdot \nabla ( z_1 -z_2 ) \right) =(\na \us )^T \colon \nabla (z_1 -z_2) =0
\]
(see~\cite[p.~104]{KNV}), where we used the divergence-free condition for $u,b,\us$, and in the last equality; we also employed the fact that $\us$ is independent of $x,y$ (by \eqref{shear}). Thus $q_1=q_2$ by the classical elliptic theory, and consequently the desired local well-posedness of ideal MHD \eqref{mhd} follows from the Elsasser-MHD system~\eqref{mhd_els}. 

We emphasize that the fact that $\us$ is constant in space in the case of the rectangle $\Omega'$ is the main reason why the local well-posedness result is possible. This, together with the existence of the background flow $\us$, which guarantees the transport of $\supp\,b(\cdot ,t)$ away from $\Omega'$, are the main two reasons why the result of \cite{KNV} is possible. We note that these arguments cannot be generalized to an arbitrary 2D domain $\Omega'$ or the 3D case, resolved in this paper.
We note that the boundary controllability problem for a general 2D domain was recently resolved by Rissel~\cite{R}, who reduced the problem to interior controllability.

To state our main result, let $\Omega \subset \R^n$, $n=2,3$ be a bounded $H^{r+2}$  domain in the sense that there exists a Lipschitz function $d\colon \R^3 \to \R$ such that 
\eqnb\label{sobolev_domain}
d\in H^{r+2} (Q)\qquad \text{ and }\qquad  \p \Omega = \{ d=0 \}
\eqne
for some neighbourhood $Q$ of~$\p \Omega$. For example, if $\p \Omega$ is piecewise a graph of a $H^{r+2}$ function, then one can take $d$ as the signed distance function. Let $\Omega'\subset \Omega$ be a subdomain, set $\Gamma \coloneqq \p \Omega' \cap \Omega$, and let $u_0,b_0,u_1,b_1 \in \mathcal{H}_r$, where
\eqnb\label{def_Hr}
 \mathcal{H}_r \coloneqq \{ u\in H^r (\Omega ) \colon \div u =0\text{ in }\Omega , \, u\cdot \nn =0 \text{ on }\p \Omega  \},
\eqne
and $\nn$ denotes the outward normal vector to~$\p \Omega$. We note that we can assume that $b_0=b_1=0$ on $\Omega \setminus \overline{\Omega' + B(\delta )}$ for any preassigned $\delta $; see Corollary~\ref{cor_wlog} for details. 

\begin{theorem}[Main result: boundary controllability of the 3D and 2D ideal MHD system]\label{T01}
Let $r\geq 3$ in the case $n=3$ and $r\geq 2$ in the case $n=2$. There exist $T_0>0$ and $k,l\in C([0,T_0];H^{r-1/2} (\Gamma ))$ such that there exists a unique strong solution $(u,b)$ to \eqref{mhd} on time interval $[0,T_0]$ such that $(u(0),b(0))=(u_0,b_0)$ on $\Omega'$ and $(u(T_0 ),b(T_0 ))=(u_1,b_1)$ on~$\Omega'$.
\end{theorem}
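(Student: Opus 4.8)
The plan is to reduce the boundary controllability problem on $\Omega'$ to a global-in-space construction on the extended domain $\Omega$, following the philosophy of \cite{KNV,R} but using the full-space (rather than Elsasser-based) well-posedness theory announced in the abstract. First I would extend the four given fields $u_0,b_0,u_1,b_1\in\mathcal H_r$ to divergence-free, impermeability-satisfying fields on $\Omega$, and, invoking Corollary~\ref{cor_wlog}, arrange that $b_0$ and $b_1$ vanish outside a small neighbourhood $\Omega'+B(\delta)$ of $\Omega'$ inside $\Omega$; the fields $u_0,u_1$ need no support constraint since the Euler part of the system is controllable without moving mass out of the domain. By the scaling symmetry of ideal MHD (reproduced in Step~2 of Section~\ref{sec_3d}), I may also assume $\|u_0\|_{H^r},\|b_0\|_{H^r},\|u_1\|_{H^r},\|b_1\|_{H^r}$ are as small as needed and that the control time $T_0$ is as large as needed.

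The heart of the construction is to pick a \emph{reference Euler flow} $\us$ on $\Omega$ — a smooth, divergence-free, time-dependent vector field solving the Euler equations on $\Omega$ with $\us\cdot\nn=0$ on $\p\Omega$ — whose particle trajectories sweep the closure of $\Omega'$ entirely outside of $\overline{\Omega'}$ within a fixed time, and whose flow map carries the $\delta$-neighbourhood of $\Omega'$ off $\overline{\Omega'}$ as well (this is where one uses that $\Omega$ is strictly larger than $\Omega'$, connected through $\Gamma$; in 2D one can take a shear flow as in \eqref{shear}, in 3D one uses the Glass-type construction of a flow that is globally transporting on a prescribed compact set). Writing $(v,b)=(u+\us,b)$, the perturbation $(u,b)$ solves a system of the form
\eqnb\label{perturbed-sys}
\begin{split}
\p_t u + (u+\us)\cdot\nabla u + u\cdot\nabla\us - b\cdot\nabla b + \nabla p &= 0,\\
\p_t b + (u+\us)\cdot\nabla b - b\cdot\nabla(u+\us) &= 0,\\
\div u = \div b &= 0,
\end{split}
\eqne
on $\Omega$, with $u\cdot\nn = b\cdot\nn = 0$ on $\p\Omega$. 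The new well-posedness theory (the ``byproduct'' advertised in the abstract) gives, for small data, a unique strong solution on a time interval whose length is controlled from below by the size of $\us$ and is therefore at our disposal after rescaling; crucially it does \emph{not} require $\us$ to be spatially constant, so it applies on an arbitrary $H^{r+2}$ domain. Because $b_0$ is supported in $\Omega'+B(\delta)$ and the $b$-equation is (on $\{b=0\}$) a linear transport equation along $u+\us$, and since $u$ stays $H^r$-small so the flow of $u+\us$ stays uniformly close to that of $\us$, one concludes $\supp b(\cdot,t)\cap\overline{\Omega'}=\emptyset$ for $t$ past a fixed threshold $T_1<T_0$. At that moment the magnetic field has been removed from $\Omega'$ and the system restricted to $\Omega'$ is just the incompressible Euler equation, to which the exact boundary controllability of Coron \cite{C1,C2,C3,C4} (for $n=2$) or Glass \cite{G1,G2} (for $n=3$) applies on the remaining time interval $[T_1,T_0]$ to steer $v=u+\us$ from its value at time $T_1$ to $u_1$ while keeping $b\equiv 0$ near $\overline{\Omega'}$; one must check the Euler-controllability trajectory can be chosen to also preserve $b=0$ on $\Omega'$, which holds because one may take the Euler control flow to leave $\Omega'$-exterior mass undisturbed near $\supp b$. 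Finally, the boundary controls $k=v\cdot\nn|_\Gamma$ and $l=b\cdot\nn|_\Gamma$ are simply \emph{read off} from the trace of the solution just constructed on $\Omega$ restricted to $\Gamma$; uniqueness of the solution on $\Omega'$ with these traces follows from the well-posedness theory applied on $\Omega'$ directly (or by restriction, once one verifies that the $\Omega$-solution is the unique $\Omega'$-solution with the prescribed normal traces).

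The main obstacle — and the reason the paper needs a genuinely new ingredient — is the local well-posedness of \eqref{perturbed-sys} on a general bounded domain without Elsasser variables. The Elsasser trick converts MHD into two coupled transport-type equations, but as the excerpt explains, inverting it forces $q_1=q_2$, which on a general domain fails because $\nabla\us$ is not constant; the obstruction $(\nabla\us)^T:\nabla(z_1-z_2)$ no longer vanishes. So one must do energy estimates directly on $(u,b)$ in $H^r$, and the delicate point is the pressure and the divergence-free constraint for $b$, since the $b$-equation carries no pressure: one has to show that $\div b$ is propagated from $0$ by the structure of the equation itself (testing the $b$-equation against $\nabla\phi$ and using $\div u=\div\us=0$), while simultaneously closing the $H^r$ a priori estimate for the coupled $(u,b)$ with the elliptic estimate for $p$ from $-\Delta p = \nabla:\bigl((u+\us)\otimes u + u\otimes\us - b\otimes b\bigr) + \text{b.c.}$ on the $H^{r+2}$ domain. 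I expect this energy-method well-posedness (existence via a Galerkin or mollified iteration scheme, uniqueness and continuity-in-data via the same estimates) to be the substantial technical core, with the rest of the controllability argument being a careful but essentially standard orchestration of the transport-out-of-domain idea and the known Euler controllability results.
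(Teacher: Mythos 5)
The central gap in your plan is the reference flow. You postulate a single \emph{divergence-free} Euler flow $\us$ on $\Omega$ with $\us\cdot\nn=0$ on $\p\Omega$ whose trajectories carry all of $\overline{\Omega'}$ (and a $\delta$-neighbourhood of it) outside $\overline{\Omega'}$ in finite time. In $n=3$ the paper explicitly flags the existence of such a flow as an open problem, and the Glass-type constructions actually used (return flows $\us_a=\nabla\theta$, Lemma~\ref{L03}, Section~\ref{sec_uS}) are solutions of the \emph{forced} Euler equations~\eqref{pert_Euler} with $\div\,\us_a=g$ supported in $\Omega\setminus\overline{\Omega'}$, i.e.\ they are \emph{not} divergence-free on the full extended domain. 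Moreover $\Omega$ is bounded, so a volume-preserving flow cannot sweep all of $\overline{\Omega'}$ out simultaneously; the 2D argument avoids this by passing to a periodic channel via a conformal map, but no such device is available in 3D. This is exactly why the paper abandons the ``single sweep'' idea and instead eliminates $b$ piece-by-piece over finitely many forward/backward passes with the return flows $\us_{x_m}$ (Steps~3--7 of Section~\ref{sec_3d}).

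Once the return flow fails to be divergence-free outside $\Omega'$, a second difficulty opens up that your argument does not address: the magnetic field $b$ satisfies $\p_t(\div\,b)+v\cdot\nabla(\div\,b)-b\cdot\nabla(\div\,\us)=0$ (equation~\eqref{div_b}), so $\div\,b=0$ is preserved only as long as $\supp\,b$ avoids $\supp(\div\,\us)$. Your sentence ``one has to show that $\div b$ is propagated from $0$ by the structure of the equation itself'' is true only when $\div\,\us\equiv 0$, which here it is not. The paper's cure is the cancelling operator of Step~6/Section~\ref{sec_cancel} and the tentacle-cutting procedure of Step~7a, which excise parts of $\supp\,b$ in $\Omega\setminus\overline{\Omega'}$ whenever it approaches $\supp(\div\,\us_{x_m})$, plus a bookkeeping argument that only finitely many cuts are needed. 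None of this is present in your proposal, and without it the evolution of $b$ would not remain divergence-free, which breaks the $b\cdot\nn=0$ constraint and the whole energy estimate. Your reading of the Elsasser obstruction and of why a domain-general well-posedness theorem is needed is accurate (though the paper builds solutions via analytic spaces and analytic approximation of the domain rather than a Galerkin scheme), and Steps~1--2 of your plan do match the paper; but the core 3D mechanism — piece-by-piece flushing with surgical cancellations — is missing, and what you wrote in its place would not close.
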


We note that the above theorem covers both cases, $n=2$ and $n=3$, and that it is the first result on the existence or controllability of ideal MHD system \eqref{mhd} which does not use the Elsasser variables~\eqref{els}. In particular, it provides the first local well-posedness result for the ideal MHS system which holds in an arbitrary bounded $H^{r+2}$ domain. We refer the reader to the local well-posedness lemma (Lemma~\ref{L01}) for details.

In the case of $n=2$ of Theorem~\ref{T01}, an analogue of the shear flow \eqref{shear} can be found using complex analytical methods (see Section~\ref{sec_2d}), but  the existence of such flows  appears to be a difficult open problem in the case of $n=3$. In such case we develop a new method  of, roughly speaking, eliminating the magnetic field $b$ piece-by-piece. Namely, we will divide $\supp\,b_0$ into $M$ pieces such that, for each piece has a corresponding background flow $\us_{a}$, where $a=x_m$ for some $x_m\in \Omega'$, $m=1,\ldots , M$, with the property that $b$ can be ``transported away'' from the piece into $\Omega \setminus \Omega'$, where we will perform a number of ``surgeries'' on it (see Step~7 in Section~\ref{sec_3d}) that guarantee, after bringing $b$ back (via a reversed background flow), that $b=0$ on the given piece; see Figure~\ref{fig_sketch} for a sketch. The process is repeated finitely many times until $b|_{\Omega'}=0$.

We emphasize that the construction of the background flows in Step~3 of Section~\ref{sec_3d} is not based on shear flows but is instead inspired by the return method developed by Coron \cite{C5,C6} and used by Glass \cite{G1,G2} in the context of controllability of the 3D incompressible Euler equations. We will thus refer to these as the \emph{return flows}. We also note that each of the return flows $\us_{a}$, for $m=1,\ldots, M$, will solve  the forced incompressible Euler equations in $\Omega$, 
\eqnb\label{pert_Euler}
\begin{split}\p_t \us_a + (\us_a \cdot \nabla ) \us_a + \nabla p_a &=f,\\
\mathrm{div}\, \us_a &=g
\end{split}
\eqne
in $\Omega \times (0,T)$, where $T>0$, with some $p_a \in C^\infty (\overline{\Omega }\times [0,T]; \R^3)$ such that $\us_a \cdot \nn =0$ on $\p \Omega$, $\supp\, \us_a \subset \overline{\Omega }\times [T_0/4,3T_0/4]$, $\supp\,f (\cdot,t),\supp\,g(\cdot,t)  \subset \Omega \setminus \Omega'$. This means that each $\us_a$ solves the 3D incompressible Euler equations in $\Omega'$, but we also need to be extremely careful with the dynamics in $\Omega\setminus \Omega'$.  The main challenge is to ensure that $\div\, b(\cdot ,t)=0$ for all times, which is related to the fact mentioned above that there is no pressure term guaranteeing the preservation of divergence in the evolution equation for $b$ in~\eqref{mhd}.  To describe the challenge, we note that we will consider a solution $(v,b)$ such that
\eqnb\label{v_form}
v(x,t)= \us (x,t) + u(x,t).
\eqne
Then $(v,b)$ satisfies the ideal MHD equations on $\Omega$ with no penetration boundary conditions and the same forcing as in \eqref{pert_Euler}
 if 
\eqnb
  \begin{split}
   \partial_{t} u + u\nabla u + \us \nabla u + u \nabla \us - b\nabla
   b + \nabla p &= 0,\\
      \partial_{t} b
    + u \nabla b
    - b \nabla u
    + \us \nabla b
    - b \nabla \us
    &=0,
    \\
    \nabla \cdot u
    = \nabla \cdot b &= 0
    \inon{in $\Omega$,}
    \\
    u\cdot\nn = b\cdot \nn &= 0
    \inon{on $\partial\Omega $}
   ,
  \end{split}
   \label{EQ07a}
  \eqne
with the initial conditions
  \begin{equation}
   (u(0),b(0))
   = (u_0,b_0) = (v_0-U(0),b_0).
   \label{EQ26}
  \end{equation}
Thus 
\eqnb\label{div_b}
\p_t (\div \,b) + v\cdot \nabla (\div \,b) - b\cdot \nabla (\div \, \us ) =0\qquad \text{ in }\Omega ,
\eqne
which shows that $\div\,b=0$ is preserved, except when $\supp\,b(\cdot ,t ) \cap \supp \, (\div \, \us (\cdot ,t )) \ne \emptyset$. This means that we need to find a way to guarantee that the support of $b$ never touches $\supp \, (\div \, \us (\cdot ,t ))$, which is a major difficulty of this work and is addressed in Step~7 of Section~\ref{sec_3d} via a ``tentacle-cutting procedure'', i.e., a surgery on $b$ in $\Omega \setminus \Omega'$ that makes $b$ vanish near $\supp \, (\div \, \us (\cdot ,t ))$ whenever $\supp\,b(\cdot ,t)$ approaches it (i.e., whenever $\supp\, b$ pokes a tentacle where it is not welcome).\\

Having found a family of return flows $\{ U_{x_m} \}$, for $x_m\in \Omega'$, with $m=1,\ldots , M$, and being able to take care of the constraint $\div \, b=0$, the main difficulty now becomes to prove a local well-posedness result for~\eqref{EQ07a}. To this end, we develop a new method of constructing solutions using analytic spaces.

We define the analytic spaces in Section~\ref{sec_prelim_an} below and give a detailed proof of local well-posedness in Section~\ref{sec_lwp}. However, we point out here that the method yields a Sobolev solution $(u(t),b(t))$, which is defined on a domain $\Omega$ of merely Sobolev regularity. To achieve this, we first suppose that  $\Omega$ is an analytic domain and approximate the initial conditions by analytic functions.  For such approximations we use analytic estimates \eqref{an1}--\eqref{an3}, the product estimate \eqref{product_curved} and the pressure estimates \eqref{EQ27_curved}--\eqref{EQ27_acurved} to obtain local-in-time solutions satisfying both the Sobolev a~priori bound and an analytic a~priori bound (see~\eqref{001} and \eqref{002}, respectively). An important feature of such solutions is the \emph{persistence of analyticity}, namely that the analytic norms remain under control as long as the Sobolev norms do; see~\eqref{002c} for details. We then take the limit of the approximations as well as approximate a given Sobolev domain (recall~\eqref{sobolev_domain}) by an analytic one to obtain a local well-posedness result. This use of analytic spaces thus makes, generally speaking, for a robust existence and uniqueness theorem in Sobolev settings. In particular, thanks to this approach, we can completely avoid the use of Elsasser variables~\eqref{els}.\\

As for the proof of Theorem~\ref{T01} in the 2D case, we rely on a geometric proposition (Proposition~\ref{prop_conformal}), which allows us to find a bijection between an extension of any domain and a periodic  flat channel. To this end, we use the Riemann Mapping Theorem (Theorem~\ref{thm_riemann}); however, the main challenge is to guarantee that, from among all conformal mappings arising from the Riemann Mapping Theorem, we can find one guaranteeing periodicity of the mapping in the horizontal direction. In Section~\ref{sec_conform}, we prove, using geometric arguments, that this can be achieved.\\ 

Using the geometric proposition, we construct a background flow $\us$ which guarantees to bring $b$ outside, and then the proof of Theorem~\ref{T01} in the 2D case can be obtained following the same local well-posedness result (Lemma~\ref{L01}) as in the 3D case; see Section~\ref{sec_2d}.\\

The structure of the paper is as follows. In the following section, we introduce certain basic tools as well as discuss some techniques involving analytic spaces. In Section~\ref{sec_3d}, we prove Theorem~\ref{T01} in the 3D case, by describing seven steps leading to the proof. The steps use an algorithm for finding background flows (Section~\ref{sec_uS}), the local well-posedness lemma (Section~\ref{sec_lwp}), and a construction of a certain cancellation operator, which lets us shrink the support of the magnetic field $b$ (Section~\ref{sec_cancel}). Finally, we prove Theorem~\ref{T01} in the 2D case in Section~\ref{sec_2d}.

\section{Preliminaries}\label{sec_prelims}

First, we introduce some basic tools and then discuss analytic spaces in Section~\ref{sec_prelim_an} below.

For two open subsets $V,W\subset \Omega'$, we will write $V\subset \subset W$ to mean that $(V+B(\delta ))\cap \Omega' \subset W$ for some $\delta >0$.

We will use the following ODE fact:
\eqnb\label{ode_fact}
f'(t) \leq c H(t) f(t) + g(t)\qquad \text{ implies }\qquad f(t) \leq \left( f(0) + \int_0^t g(s) \d s \right) \mathrm{e}^{c\int_0^t H(s) \d s}
.
\eqne

Given a vector field $\us (x,t)$, we denote by $\Phi^{\us} (x,t,s)$ the particle trajectory along $\us$ at a time $t$, with the starting time $s$, i.e., the solution to
\eqnb\label{part_traj}
\begin{cases}
&\p_t \Phi^{\us} (x,t,s) = \us (\Phi^{\us} (x,t,s) ,t), \\
&\Phi^{\us} (x,s,s) =x,
\end{cases}
\eqne
and we set $\Phi^{\us} (x,t)\coloneqq \Phi^{\us} (x,0,t)$.\\

We also recall a version of a result of Cheng and Shkoller~\cite{CS} regarding solvability of a div-curl system on a bounded Sobolev domain. 

\begin{lemma}[Elliptic system involving div and curl]\label{L_CS}
Let $r>3/2$ and assume that $Q$ is a bounded $H^{r+2}$ domain. Let $f,g\in H^{l-1} (Q)$, where $l\in \{ 1, \ldots , r+1\}$, be such that $\div \, f =0$,  and $h\in H^{l-1/2}(\p Q)$ such that $\int_{\p Q} h\cdot \nn= \int_Q \div g$. Then there exists a unique solution $\phi \in H^l (Q)$ to
\[
\begin{cases}
\curl\, \phi  =f \qquad &\text{ in }Q,\\
\div \, \psi =g   &\text{ in } Q ,\\
\psi \cdot \nn =h &\text{ on }\p Q
\end{cases}
\]
and
\[
\| \phi \|_{H^l (Q)} \lec_{Q,l} \| f \|_{H^{l-1}(Q)} + \| g \|_{H^{l-1}(Q)} + \| h \|_{H^{l-1/2}(\p Q)}  .
\]
Moreover, if $h=0$ and $ f\cdot \nn =0 $ on $\p  Q$, then there exists a unique solution $\phi \in H^l (Q)$ to
\[
\begin{cases}
\curl\, \phi  =f \qquad &\text{ in }Q,\\
\div \, \psi =g   &\text{ in } Q ,\\
\psi \times \nn =0 &\text{ on }\p Q
\end{cases}
\]
such that $\| \phi \|_{H^l (Q)} \lec_{Q,l} \| f \|_{H^{l-1}(Q)} + \| g \|_{H^{l-1}(Q)} $.
\end{lemma}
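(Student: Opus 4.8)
The plan is to reduce the div--curl system to a pair of scalar/vector elliptic boundary value problems whose solvability on a bounded $H^{r+2}$ domain is classical, and then to combine them. I would first treat the case $h \in H^{l-1/2}(\p Q)$ with the compatibility condition $\int_{\p Q} h\cdot \nn = \int_Q \div g$. The idea is to split the unknown as $\psi = \nabla \varphi + w$, where $\varphi$ absorbs the divergence and the normal boundary data, and $w$ carries the curl. Concretely, I would first solve the Neumann problem $\Delta \varphi = g$ in $Q$, $\p_\nn \varphi = h$ on $\p Q$; this is solvable precisely because of the stated compatibility condition, and elliptic regularity on the $H^{r+2}$ domain gives $\varphi \in H^{l+1}(Q)$ with $\|\varphi\|_{H^{l+1}} \lesssim_{Q,l} \|g\|_{H^{l-1}} + \|h\|_{H^{l-1/2}}$, so that $\nabla\varphi \in H^l$ has divergence $g$ and the correct normal trace. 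It then remains to find $w \in H^l(Q)$ with $\div w = 0$, $\curl w = f$, and $w\cdot\nn = 0$ on $\p Q$; note $\div f = 0$ is exactly the integrability condition needed for $\curl w = f$ to be solvable, and $\int_Q \div f = 0$ together with $w\cdot\nn=0$ is consistent. This last step is the heart of the matter and is where I would invoke the Cheng--Shkoller div--curl machinery (or, equivalently, solve a vector Laplace/Hodge-type problem): writing $w = \curl A$ with $A$ in a suitable gauge, one is led to $-\Delta A = f$ with appropriate boundary conditions, and elliptic estimates give $\|w\|_{H^l} \lesssim_{Q,l} \|f\|_{H^{l-1}}$. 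Adding, $\phi$ (the potential whose relation to $\psi$ is fixed by the statement) inherits the estimate $\|\phi\|_{H^l} \lesssim_{Q,l} \|f\|_{H^{l-1}} + \|g\|_{H^{l-1}} + \|h\|_{H^{l-1/2}}$, and uniqueness follows because the difference of two solutions is curl-free, divergence-free, with vanishing normal trace, hence lies in the (finite-dimensional, and for these boundary conditions trivial on the relevant topology, or else handled by the stated normalization) harmonic field space.

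For the second assertion, with $h = 0$ and the extra hypothesis $f\cdot\nn = 0$ on $\p Q$, the boundary condition on $\psi$ is changed to the tangential condition $\psi \times \nn = 0$. Here I would run the same strategy but with the roles of the boundary conditions on the scalar potential and the vector potential interchanged: solve the Dirichlet problem for $\varphi$ (so that $\nabla\varphi$ has the right divergence and purely tangential part behaves correctly) and then solve the remaining system $\curl w = f$, $\div w = 0$, $w \times \nn = 0$, where now the condition $f\cdot\nn = 0$ on $\p Q$ is precisely the compatibility condition making the tangential-boundary div--curl problem solvable. The estimate $\|\phi\|_{H^l} \lesssim_{Q,l} \|f\|_{H^{l-1}} + \|g\|_{H^{l-1}}$ follows as before, with no boundary term since $h = 0$, and uniqueness again reduces to triviality of the space of harmonic fields with vanishing tangential trace on the given domain.

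The step I expect to be the main obstacle is the sharp elliptic regularity on a domain of only $H^{r+2}$ regularity: one must be careful that solving the div--curl (or Hodge) subsystem with $f \in H^{l-1}$ really yields $w \in H^l$ up to the boundary, uniformly in the relevant range $l \in \{1,\dots,r+1\}$, and that the implied constants depend only on $Q$ and $l$. This is exactly what the cited Cheng--Shkoller result is designed to provide — it is stated for $H^{r+2}$ domains and the regularity counts match — so the cleanest route is to quote it directly for the divergence-free, normal- (resp.\ tangential-) boundary piece rather than to rebuild the estimate by hand; the remaining Neumann/Dirichlet scalar problem is then standard on such domains. I would also double-check the low-regularity endpoint $l = 1$, where traces such as $\p_\nn\varphi \in H^{-1/2}(\p Q)$ must be interpreted weakly, and the topological normalization (fixing $\phi$ up to harmonic fields) that makes the solution genuinely unique.
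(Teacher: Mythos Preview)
The paper does not prove this lemma at all: its entire proof is the single line ``See \cite[Theorem~1.1]{CS}.'' Your sketch is a reasonable outline of how one would establish such a div--curl estimate from scratch (Hodge-type splitting into a scalar Neumann/Dirichlet potential plus a divergence-free remainder), and you yourself identify that the sharp boundary regularity on an $H^{r+2}$ domain is ``exactly what the cited Cheng--Shkoller result is designed to provide'' and that ``the cleanest route is to quote it directly.'' That is precisely what the paper does, and nothing more --- so your proposal is not wrong, but it is considerably more elaborate than what the paper actually contains.
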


\begin{proof}
See \cite[Theorem~1.1]{CS}.
\end{proof}

\subsection{Analytic spaces}\label{sec_prelim_an}

We say that a domain $\Omega$ is analytic if there exists a Lipschitz function $d\colon \R^3 \to \R$ such that $ \p \wo = \{ d =0 \}$ and
\eqnb\label{def_an_domain}
d \text{ is a real analytic function on }Q,
 \eqne
 where $Q$ is an open neighbourhood of~$\p \wo$. 
 
We introduce the Komatsu convention \cite{K1} on derivative notation,
\eqnb\label{komatsu_not}
\p^i \coloneqq \bigsqcup_{\alpha \in \{ 1,2\}^i} D^\alpha u,
\eqne
where 
\[
D^\alpha u \coloneqq \p_{\alpha_1} \ldots  \p_{\alpha_i} u.
\]
For example, $\p^2$ is the $2\times 2$ matrix of second order derivatives. Note that the same mixed derivative is included in $\p^i$ multiple times. 
As a consequence of such notation, we obtain
\[
\| D^i u \| = \sum_{\alpha \in \{ 1,2\}^i} \| D^\alpha u \|.
\]
Moreover, we have the product rule 
\eqnb\label{komatsu_product}
\p^i (fg) - f \p^i g = \sum_{k=1}^i {i \choose k } (\p^k f ) (\p^{i-k} g ),
\eqne
where the last product denotes the tensor product of derivatives that is consistent with \eqref{komatsu_not}
(see (5.1.3) in~\cite{K1}). The product rule \eqref{komatsu_product} is a consequence of the  identity
\[
\sum_{\substack{\alpha' \subset \alpha \\ |\alpha' |=k }} {\alpha \choose \alpha' } = {m \choose k},
\]
for each $m\in \N$, $k\in \{ 0 , \ldots , m \}$, and~$|\alpha |=m$. 
 
 We say that a vector field $T=\sum_{i=1}^n a_i \p_i$ is a \emph{tangential operator} to $\p \Omega$ if $T$ is a global analytic vector field such that $T\delta =0$ on $\p \Omega$, where $\delta (x) $ is the distance function to the boundary $\p \Omega$, taking positive values inside $\Omega$ and negative outside. We refer the reader to \cite[Section~2.1]{CKV} and \cite{JKL2} for an extensive discussions on tangential operators, but we note here that there exists a tensor $T^{j}$, using the Komatsu notation \eqref{komatsu_not}, such that its components span the tangent space at $x$ for each $x\in \p \Omega $. For example, as shown in \cite{JKL2}, we can use one tangential derivative in 2D and three in 3D, with a constant number $n(n-1)/2$ in any dimension~$n\geq 2$.
 
 We define analytic spaces as:
 \begin{align}
  \begin{split}
   &
   \Vert \psi \Vert_{X(\tau)}
   \coloneqq 
  \sum_{i+j\geq r } c_{i,j} 
    \Vert \p^i T^j  \psi \Vert , \qquad \text{ where } c_{i,j} \coloneqq \frac{(i+j)^r}{(i+j)!}\tau^{i+j-r }
  \overline{\epsilon}^i \epsilon^j,
    \\&
   \Vert \psi \Vert_{\tX(\tau)}
   \coloneqq 
   \Vert \psi \Vert_{X(\tau)}
   + \Vert \psi \Vert_{H^{r}}
  \\&
   \Vert \psi \Vert_{Y(\tau)}
   \coloneqq 
   \sum_{ i+j\geq r+1}
    \frac{
     (i+j)^{r+1}
        }{
     (i+j)!
    }
    \tau^{i+j-r-1}
    \overline{\epsilon}^i \epsilon^{j}
    \Vert \p^i T^j  \psi \Vert
    \\&
   \Vert \psi \Vert_{\widetilde{Y}(\tau)}
   \coloneqq 
  \tau  \Vert \psi \Vert_{Y(\tau)}
   + \Vert \psi \Vert_{H^{r}} \\&
   \Vert \psi \Vert_{\oY (\tau)}
   \coloneqq 
  \Vert \psi \Vert_{Y(\tau)}
   + \Vert \psi \Vert_{H^{r}},
  \end{split}
   \label{EQ19c}
  \end{align}
where all norms are taken on $\Omega$, the parameter $r\geq 3$ is fixed, and we use the notation
\[
\| \cdot \| \coloneqq \| \cdot \|_{L^2 (\wo )}
\]
for the $L^2$ norm.  It is well-known (see \cite{KP2}) that 
\eqnb\label{n_is_ana}
\nn \in \tX ( \tau_0; \widetilde{\Omega }),
\eqne
for some $\varepsilon_0>0$ and some neighbourhood $\widetilde{\Omega} \subset \Omega$ of $\partial \Omega$ in $\Omega$, and $\| f\|_{\tX (\tau ; \Omega')}\coloneqq \|  f\|_{X (\tau ; \Omega')}+ \| f \|_{H^r(\Omega')}$, where
\[
\| f\|_{X(\tau;\Omega')} \coloneqq \sum_{i+j\geq r} c_{i,j} \| \p^i T^j f \|_{L^2 (\Omega')}
\] 
denote the analytic spaces $\tX$ and $X$ on the subdomain $\Omega'$; see~\eqref{EQ19c} below.
In~\eqref{n_is_ana}, $\nn$ denotes the outward unit normal.

We note that  definitions \eqref{EQ19c} give 
\eqnb\label{an1}
\| f \|_{\tX} \lec_r \| f \|_{\tY },
\eqne
\eqnb\label{an2}
\| \na f \|_{\tX} \lec_r \| f \|_{\oY }
\eqne
and
\eqnb\label{an3}
\sum_{i+j\geq r} c_{i,j} \| \p^{i+1} T^j  f \|  \lec_r \| f \|_{Y }.
\eqne

\section{3D controllability}\label{sec_3d}

Here we prove Theorem~\ref{T01}. using the following steps.\\

\noindent\texttt{Step~1.} We first note that we can assume that $b_1=0$ and that $v_1$ is arbitrary. \\

\noindent Indeed, suppose that we have found $T_1 >0$ and $k,l\in C([0,T];H^{r-1/2} (\Gamma ))$ such that \eqref{mhd} has a unique solution $(v,b)$ on $[0,T]$ with $u(\cdot, 0 ) = u_0$, $b(\cdot ,0) = b_0$ such that $b(\cdot ,T)=0$. Thus, at the time $T$, the system becomes the 3D Euler equations, whose exact boundary controllability was established by Glass~\cite{G2}. Thus there exists $T'>0$ and $k\in C([T,T+T'];H^{r-1/2} (\Gamma ))$ such that the solution can be continued to $T+T'$ (where $l\coloneqq 0$ on $[T,T+T']$) to obtain $v(\cdot ,T+T')=b(\cdot ,T+T')=0$. Applying the same procedure to the initial data $(u_1,b_1)$ and using time-reversibility of the ideal MHD system, we can combine  such solutions and controls at $T+T'$ to obtain controls $k,l$ which drive an initial data $(u_0,b_0)$ to $(u_1,b_1)$ in a finite time. \\

\noindent\texttt{Step~2.} We  note that we can assume that $\| v_0 \|_{H^r} +\| b_0 \|_{H^r}  $ is arbitrarily small and that $T_0$ is arbitrarily large.\\

\noindent Indeed, given $v_0, b_0\in \mathcal{H}_r$ we consider the initial data $(\lambda v_0, \lambda b_0)$ for some small parameter $\lambda>0$. Suppose that we can solve the control problem with such initial data, i.e., obtain $T_0^{(\lambda )}>0$ and $k^{(\lambda )},l^{(\lambda )} \in C([0,T_0^{(\lambda )}];H^{r-1/2} (\Gamma )$ such that the unique solution $(v^{(\lambda )}, b^{(\lambda )})\in C([0,T_0^{(\lambda )}];H^r )$ to \eqref{mhd} with the initial data $(\lambda v_0, \lambda b_0)$ satisfies $b^{(\lambda )} (T_0^{(\lambda )})=0$. Then
\[
k(x,t)\coloneqq \frac{1}{\lambda} k^{(\lambda )} \left( x , \frac{t}{\lambda } \right), \qquad l(x,t)\coloneqq \frac{1}{\lambda} l^{(\lambda )} \left( x , \frac{t}{\lambda } \right)
\]
gives rise to the unique solution $v(x,t)\coloneqq \frac{1}{\lambda} v^{(\lambda )} \left( x , \frac{t}{\lambda } \right)$, $b(x,t)\coloneqq \frac{1}{\lambda} b^{(\lambda )} \left( x , \frac{t}{\lambda } \right)$ to \eqref{mhd} on the time interval of length $T_0^{(\lambda )}/\lambda$ with the initial data $(v_0,b_0)$.\vspace{0.5cm}\\

We now comment on the main strategy of the proof. For each $a\in \overline{\Omega'}$, we will  find an appropriate background flow $\us_a \in C([0,T_0]; H^{r+1} (\wo ))$ and we will study the incompressible ideal MHD system around $(\us ,0)$, posed on the extended domain $\Omega$, namely we will look looking for a solution $(u,b)$ of \eqref{EQ07a} on $\wo$ with $u_0, b_0 \in \mathcal{H}_r$ (recall~\eqref{def_Hr}). 

We emphasize that such reformulation gives us freedom in prescribing the dynamics of $u,b$ on $\wo \setminus \overline{\Omega '}$. This lets one use the \emph{return method} by first constructing $\us$ that, roughly speaking, ``flushes'' a piece of the fluid in $\Omega$ through $\wo\setminus \overline{\Omega '}$ over time interval $[0,T]$. Given the ``piece of the fluid'' which is, at a given time, traveling through $\wo\setminus \overline{\Omega '}$ we can modify it in an arbitrary way, so that, when it comes back to $\oo$ the magnetic field $b$ of the piece is zero. We will achieve this by choosing a sequence of time instances at which we will replace $b$ on a subset of $\wo \setminus \overline{\Omega '}$ by zero (for this, we will need an appropriate Bogovski\u{\i}-type lemma (see Step~6), since we need to retain the div-free condition and the slip boundary condition). Note that such strategy will work because the level set $\{ b=0 \}$ is transported by the velocity flow (recall~\eqref{mhd}), which  guarantees that the magnetic field $b$ will remain $0$, provided $\| u\|_{H^r}$ remains sufficiently small.\\

We now make this idea precise: In Step~3, we define an algorithm for generating the background flow  that can bring a given point $a\in \oo$ (and so also its neighbourhood $B(a,r_a)\cap \oo$) into $\wo\setminus \overline{\Omega }$ and then bring it back, by time reversal. In Step~4, we use these neighbourhoods to divide $\oo$ into pieces, and in Step~5 we provide a general local well-posedness lemma which allow us to solve the perturbed MHD system~\eqref{EQ07a} for $\psi(t)=(u(t),b(t))$ in $\wo$ around the return flows. In Step~6 we describe the cancellation operator which will let us modify $b$ in $\Omega \setminus \overline{\Omega'}$. Finally, we conclude the proof of Theorem~\ref{T01} in Step~7, where we first construct $\us$ by putting together the return flows corresponding to all the pieces, one after another, and then perform the modifications of the magnetic field $b$ at a discrete sequence of time instances when each piece is present in $\wo\setminus \oo$.\\

\noindent\texttt{Step~3.} (The return flow) Given $a\in \overline{\Omega'}$ and $T>0$ we construct a \emph{return flow}, namely a solution $\us_a \in C ([0,T];H^{r+1}({\wo })) $ to the forced Euler equations \eqref{pert_Euler} with some $p_a $ such that $\us_a \cdot \nn =0$ on $\p\Omega$, $\supp\, \us_a \subset \overline{\Omega }\times [T_0/4,3T_0/4]$, $\supp\,f (\cdot,t),\supp\,g(\cdot,t)  \subset \Omega \setminus \overline{\Omega'}$, and $\Phi^{\us_a} (a,T) \in \overline{\wo } \setminus \overline{\Omega'}$, where $\Phi^{\us}$ denotes the flow of $\us$; recall~\eqref{part_traj}.\\

\noindent The construction is inspired by the classical lemmas of Coron \cite[Lemma~A.2]{C3} and Glass \cite[Lemma~6.2]{G1}, and guarantees not only that $\Phi^{\us_a} (a,T) \in \overline{\wo } \setminus \overline{\Omega'}$, but also that $\Phi^{\us_a} (a,t) $ follows \emph{any prescribed path} starting from~$a$. In particular, the construction is based on the solvability of the problem of finding $\theta \in H^{r+2} (\Omega )$ satisfying
\eqnb\label{theta_problem}
\begin{cases}
\Delta \theta =0 \qquad &\text{ in }\Omega',\\
\p_{\nn } \theta =0 \qquad &\text{ on }\p \Omega' \setminus \Gamma,\\
\nabla \theta (\overline{x}) =v,
\end{cases}
\eqne
where $\overline{x}$ is an arbitrary point of $\overline{\Omega'}$, and, if $\overline{x}\in \Omega'$, $v$ is an arbitrary vector in $\R^3$ (if, otherwise, $\overline{x}\in \p \Omega' \setminus \Gamma$, then $v$ is an arbitrary element of the tangent space $T_{\overline{x}} (\p \Omega')$). Then the background flow $\us_a$ (for each $a\in  \overline{\Omega'}$) is constructed as 
\eqnb\label{temp00}
\us_a = \sum_{i=1}^l h_i (t) \nabla \theta_i (x)\in H^r (\Omega ),
\eqne
where $l\in \N$, the $h_i$'s are appropriate smooth cut-off functions in time, and the $\theta_i$'s are solutions to \eqref{theta_problem} for various choices of $\overline{x}$, $v$, determined by~$a$. \\

We refer the reader to Section~\ref{sec_uS} for details, but we emphasize at this point that the sum in \eqref{temp00} is finite, which allows us to approximate a Sobolev vector field $\us_a$, defined on $\Omega$, by another Sobolev vector field, defined on an analytic domain approximating $\Omega$, which is of the same structure as~\eqref{temp00}. It can also be made into an analytic vector field; see~Lemma~\ref{L_an_approx}. \\

\noindent\texttt{Step~4.} We construct a finite cover of $\supp\, b_0$ by open balls $B_m\coloneqq B(x_m,r_m)\subset \R^3$, where $r_m>0$, for $m\in \{ 1, \ldots , N\}$, such that $\Phi^{\us_{x_m}} (B(x_m,r_m)\cap \oo ,1) \subset \overline{\wo}\setminus \oo$.\\

To this end, we apply Step~3 with $T\coloneqq 1$  for each $x\in \oo$ to obtain the return flow $\us_{x}$ that takes $B(x,r_x)\cap \oo $ outside of $\oo$ in time $1$, where $r_x>0$. Then the claim follows from compactness of~$\oo$ and the fact that we can assume $b_0=0$ outside of $\bigcup_{m=1}^M B_m$; recall the comments before Theorem~\ref{T01}, as well as Corollary~\ref{cor_wlog}). \\

\noindent We note that Step~3 gives us return flows $\us_{x_m}$, for $m\in \{ 1, \ldots ,N \}$, such that $\Phi^{\us_{x_m}}(B_m\cap \oo ,1) \subset \overline{\wo}\setminus \oo$.  \\

\noindent\texttt{Step~5.} We prove a general local well-posedness of the MHD system on $\Omega$ near a background solution of the forced Euler equations~\eqref{pert_Euler}.\\

\noindent Namely, given $T>0$ and $\us \in C([0,T]; H^r (\Omega ))$, we  consider solutions $(v,b)$ of the form

  We will use the short-hand notation
  \eqnb\label{psi_notation}
  \psi(t) \coloneqq (u(t),b(t)),\qquad \psi_0 \coloneqq (u_0,b_0),
  \eqne
  and, in Section~\ref{sec_lwp}, we prove the following.
\cole

\begin{lemma}[local well-posedness of the perturbed MHD system]\label{L01}
Given $T>0$, a bounded $H^{r+2}$ domain $\Omega$, and $\us \in C([0,T]; H^{r+1} (\Omega ))$, there exists $\delta >0$ such that if $\| \psi_0 \|_{H^r} \leq \delta  $, then the system \eqref{EQ07a}--\eqref{EQ26} admits a unique solution $\psi \in C ([0,T];H^r (\Omega ) )$ satisfying the estimate 
\eqnb\label{a_priori_perturbedMHD}
\| \psi (t) \|_{H^r} \leq \| \psi_0 \|_{H^r} + C_r \int_0^t \| \psi (s) \|_{H^r} \left( \| \psi(s) \|_{H^r} + \| \us \|_{H^{r+1}} \right) \d s
,
\eqne
for all $t\in [0,T]$.
\end{lemma}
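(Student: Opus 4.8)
The proof follows the classical energy-method scheme for symmetric-hyperbolic-type systems, but carried out in the analytic spaces $X(\tau), Y(\tau)$ so that the Sobolev solution lives on a merely $H^{r+2}$ domain. First I would reduce to an analytic domain: approximate $\Omega$ (in the sense of \eqref{sobolev_domain}) by an analytic domain $\Omega_\epsilon$, approximate $\us$ by an analytic vector field, and approximate $\psi_0\in H^r$ by analytic data $\psi_0^{(k)}$ with $\|\psi_0^{(k)}\|_{H^r}\le 2\delta$ and $\|\psi_0^{(k)}\|_{X(\tau_0)}<\infty$. On the analytic domain one constructs local-in-time solutions by a standard iteration scheme (e.g.\ mollified/Picard iterates, or a parabolic regularization), and the content is the pair of a priori bounds: the Sobolev bound \eqref{a_priori_perturbedMHD} (which is \eqref{001}) and an analytic bound of the form
\[
\tfrac{d}{dt}\|\psi(t)\|_{X(\tau(t))} \le \dot\tau(t)\,\text{(radius loss term)} + C_r\|\psi\|_{\widetilde X(\tau)}\bigl(\|\psi\|_{\widetilde X(\tau)}+\|\us\|_{H^{r+1}}+\|\us\|_{X(\tau)}\bigr),
\]
with $\tau(t)$ decreasing, so that analyticity persists as long as the $H^r$ norm stays bounded (this is \eqref{002c}).

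**Key steps.** (1) \emph{Energy/commutator estimates.} Apply $\p^i T^j$ to \eqref{EQ07a}, pair in $L^2(\Omega)$, and estimate the commutators $[\p^iT^j, v\cdot\nabla]$, $[\p^iT^j, b\cdot\nabla]$, the lower-order terms $u\nabla\us$, $b\nabla\us$, $\us\nabla u$, $\us\nabla b$, and the pressure term. The quadratic terms $u\cdot\nabla u$, $b\cdot\nabla b$, $b\cdot\nabla u$, $u\cdot\nabla b$ are treated by the symmetry of the MHD nonlinearity: $\int b\cdot\nabla b\cdot u - \int b\cdot\nabla u\cdot b$-type cancellations remove the top-order contributions, so only genuine commutators remain, which are controlled by the product estimate \eqref{product_curved} and, in the analytic scale, lead to the Cauchy-product structure that forces the radius $\tau$ to shrink. (2) \emph{Pressure.} Taking divergence of \eqref{EQ07a}$_1$, $-\Delta p = \mathrm{tr}((\nabla v)^2) - \mathrm{tr}((\nabla b)^2) + \text{terms in }\us$, with Neumann data $\p_\nn p = -(\text{everything})\cdot\nn$ on $\p\Omega$ obtained from the tangential part of the momentum equation; the pressure estimates \eqref{EQ27_curved}--\eqref{EQ27_acurved} then bound $\|\nabla p\|$ in both the $H^r$ and the analytic norm in terms of $\|\psi\|^2$ and $\|\us\|_{H^{r+1}}$. (3) \emph{Closing the estimates.} Combine (1)--(2) with the ODE fact \eqref{ode_fact}: since the right side of \eqref{a_priori_perturbedMHD} is quadratic in $\|\psi\|_{H^r}$ plus the linear $\|\us\|_{H^{r+1}}$ term, a standard continuity argument gives a time $T_*>0$ and a bound $\|\psi(t)\|_{H^r}\le M$ on $[0,T_*]$ depending only on $\delta$ and $\|\us\|_{C([0,T];H^{r+1})}$; choosing $\delta$ small makes $T_*\ge T$. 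The analytic bound then shows $\tau(t)$ stays positive on $[0,T]$, so the analytic norm stays finite. (4) \emph{Passing to the limit.} The analytic solutions on $\Omega_\epsilon$ are uniformly bounded in $C([0,T];H^r)$; extract a limit as $k\to\infty$ (fixed $\epsilon$) and then as $\epsilon\to0$, using the div-curl Lemma~\ref{L_CS} to recover $\psi$ from $(\curl u,\curl b,\div u,\div b)$ on the limiting domain, and check the limit solves \eqref{EQ07a}--\eqref{EQ26}. (5) \emph{Uniqueness.} Subtract two solutions, run the same energy estimate at one derivative lower (level $H^{r-1}$ suffices for the difference since coefficients are in $H^r$), and apply Gr\"onwall.

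**Main obstacle.** The crux is the persistence-of-analyticity estimate: one must track how the analytic radius $\tau(t)$ degrades, and show the degradation rate is controlled \emph{linearly} by $\|\psi\|_{H^r}$ (not by the analytic norm itself), so that as long as the Sobolev norm is finite the radius only shrinks at a finite rate and never reaches $0$ on $[0,T]$. This is delicate because the commutator terms, the tangential-vector-field commutators $[T^j,\cdot]$ coming from the curved boundary, and especially the pressure term (which is nonlocal) all feed into the Cauchy-product bookkeeping with the weights $c_{i,j}=(i+j)^r\tau^{i+j-r}\overline\epsilon^i\epsilon^j/(i+j)!$, and one needs the combinatorial inequalities behind \eqref{komatsu_product} together with the choice of $\overline\epsilon,\epsilon$ small to absorb the highest-order terms into the $\dot\tau$ contribution. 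Handling the boundary geometry — that $T^j$ are only analytic (not polynomial) vector fields on an analytic approximating domain, and that $\nn\in\widetilde X(\tau_0)$ by \eqref{n_is_ana} — is what makes the argument work on a general domain and is the technically heaviest part; the interior-case (flat or periodic) energy estimates are comparatively routine.
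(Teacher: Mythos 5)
Your proposal follows essentially the same route as the paper: analytic-space energy estimates with the Komatsu weights $c_{i,j}$, the pressure Neumann problem controlled via Lemma~\ref{L_pressure}, persistence of analyticity by choosing $\tau(t)$ through an ODE of the type \eqref{tau_choice}, approximation of both the Sobolev data and the Sobolev domain by analytic ones, and a final double limit. Two small imprecisions worth noting: the paper's passage to the limit uses an Aubin--Lions compactness argument (a uniform bound on $\partial_t\psi_k$ in $C([0,T_0];H^{r-1})$, then strong convergence to handle the nonlinearity), whereas invoking Lemma~\ref{L_CS} to ``recover $\psi$ from its curl and div'' does not by itself give the needed compactness in the quadratic terms; and the domain approximation is not a black box --- the paper builds an explicit $H^{r+2}$ diffeomorphism $\eta\colon\Omega\to Q$ (Step~2 of Section~\ref{sec_constr_sobsob}) and reconstructs the background flow on $Q$ via the $\Delta_a$-problem of Lemma~\ref{L66}, which is the technically delicate part of transferring the problem to an analytic domain.
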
\colb

We note that, since $\supp\, f , \supp\, g \subset \Omega \setminus \overline{\Omega'}$, the solution $(u,b)$ of \eqref{EQ07a} around $U$ provides us with a solution $(u+\us ,b)$ to the homogeneous ideal MHD \eqref{mhd} on $\Omega'$ with some nonhomogeneous boundary conditions $k,l\in C([0,T];H^{r-1/2} (\Gamma ))$.\\

\noindent\texttt{Step~6.} (the cancelling of $b$) Given a $H^{r+2}$ domain $\Omega $,   $H^{r+2}$ subdomains $\widetilde{W}, W\subset \Omega$  such that $\widetilde{W}\subset \subset W$, $\chi \in C^\infty (\Omega ; [0,1])$, and $b\in \mathcal{H}_r$ such that
\[
\text{ either } \chi (x) =1 \text{ or } b(x)=0 \quad \text{ for every } x\in W\setminus \widetilde{W},
\]
we construct $Tb \in \mathcal{H}_r$ such that
\eqnb\label{opT_prop}
\begin{cases}
\div\, Tb =0 \qquad &\text{ in }\Omega,\\
Tb =b &\text{ in } \Omega \setminus W,\\
Tb\cdot \nn =0 &\text{ on } \p \Omega ,\\
\|Tb\|_{H^r(\Omega) } \lec_{\Omega,W,\widetilde{W},\chi } \|b\|_{H^r(\Omega)}. &
\end{cases}
\eqne

We note that this step is reminiscent of a Bogovski\u{\i}-type cancellation: If one needs to consider a cutoff $\chi b$ of a  divergence-free vector field $b$ and find a correction of $\chi b$ which changes its divergence, $\div (\chi b)= b\cdot \na \chi$, into $0$ one can simply apply the Bogovski\u{\i} lemma (see~\cite{B1,B2} or \cite[Section~III.3]{Ga}) on~$W$. The difficulty of this step is to ensure that $b$ remains unchanged outside of $W$, and that the no-penetration boundary condition $b\cdot n$ is recovered. We construct operator $T$ in Section~\ref{sec_cancel}. \\

Having constructed the background flows $U_{x_m}$ in Step~4 and constructed solutions to the MHD system \eqref{EQ07a}--\eqref{EQ26} around them, we now set
\[
\psi_0 \coloneqq (u_0, b_0),
\]
where $u_0$ and $b_0$ denote the extensions to $\Omega$ such that $\supp\, b_0 \subset \bigcup_{m=1}^M B_m$. Now, we will construct a sequence of such solutions over consecutive time intervals such that the resulting glued solution (from one time interval to the next one) is continuous in time with values in $H^r (\Omega')$, with the first equal to $\psi_0$ and with $b=0$ at the final time.
We emphasize that the solutions are not going to be continuous in time with values in $H^r (\Omega )$, since we will perform certain cancellations of $b$ at (finitely many) times, which will be bounded operators on $H^r (\Omega)$, and will be localized in $\Omega \setminus \overline{\Omega'}$.\\

\noindent\texttt{Step~7.} (heuristic version) We construct forward and backward solutions and the cancelling operators.\\

Namely, for $m\in \{ 1,\ldots , M \}$ we will construct the forward solution $\psi_{\rm f}^{(m)} (\cdot , t)$ of the perturbed MHD system \eqref{EQ07a} around $U_{x_m}$ for $t\in (0,1]$, and, for $t\in (1,2]$, the backward solution $\psi_{\rm b}^{(m)} (\cdot , t)$, such that $\psi_{\rm b}^{(m)} (\cdot , 1)= \psi_{\rm f}^{(m)} (\cdot , t)$, and that $\Phi^{U_{x_m}} (B_m,1) \subset \Omega \setminus \overline{\Omega'}$. At time $t=1$, we will replace $b$ by $0$ in the most of $\Phi^{U_{x_m}} (B_m,1)$, except for a set $W_m$ on which we will apply the cancellation operation. The result is then brought back  by the background flow  $-U_{x_m}(\cdot ,2-t)$ over time interval $[1,2]$ via the backward solution $\psi_{\rm b}^{(m)}$; see~Figure~\ref{fig_sketch} for a sketch below. 

\includegraphics[width=\textwidth]{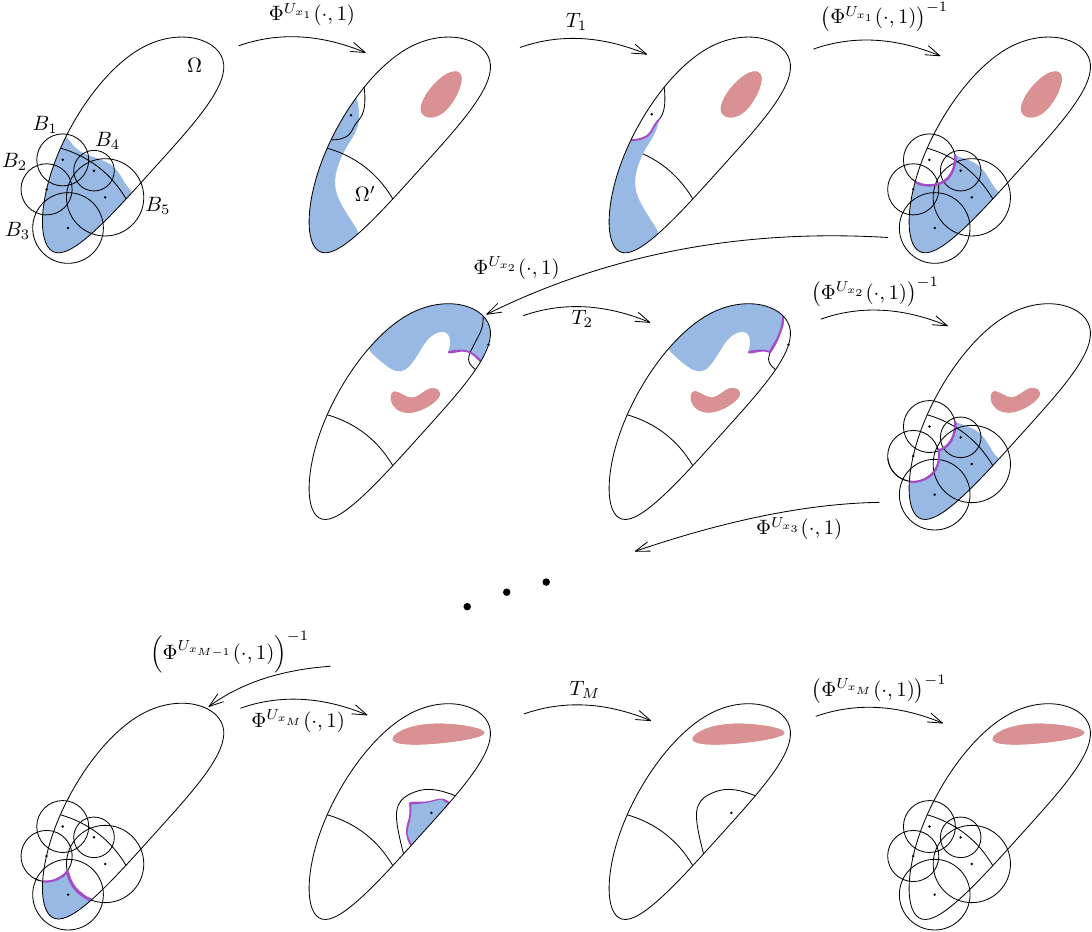}
  \captionof{figure}{A sketch of the forward and backward solutions and the cancelling operators. Here the purple sets in the sketches in the central columns correspond to the sets $W_m$, for $m=1,\ldots, M$, on which we perform the cancellation of~$b$. The red sets denote the supports of $\div\,U_{x_m}$.\vspace{0.5cm}}\label{fig_sketch} 

One difficulty is that the support of $b_{\rm f}^{(m)}(\cdot , t)$ could intersect with $\supp\, (\div U_{x_m} (\cdot ,t))$, which is depicted on Figure~\ref{fig_sketch} by the red region, for some $t\in (0,1]$. Whenever this happens, we will perform another cancelling operation on $b$ (see Step~7a below), which will eliminate a part of its support and give us an extra time to continue the evolution. There are three reason why such a cancellation resolves this issue: \begin{enumerate}
\item Since $\supp\, (\div U_{x_m} (\cdot ,t))\subset \Omega \setminus \overline{\Omega'}$, the cancellation takes place outside of $\Omega'$, which does not affect continuity in time of the solution $\psi$ inside the controlled domain $\Omega'$;
\item eliminating a part of the support of $b$ is not a problem; in fact it only helps us in achieving our goal of null-controllability of $b$;
\item we can guarantee that we will only need to perform finitely many of such operations.
\end{enumerate}

Nevertheless, the practical aspect of such cancellation is somewhat technical and requires a construction that ensures that the norms of all cancellation operators are independent of $\psi$. Also, and the small error that we make at each cancellation (which involves a slight increase of the support of $b$) does not get in the way of the overall null-controllability procedure.

\noindent\texttt{Step~7.} We construct sets $G_0, G_1, \ldots , G_M \subset \Omega$ such that
\eqnb\label{gmhm}
G_m \subset \subset H_m \coloneqq \bigcup_{k=m+1}^M B_k
,
\eqne
for all $m=0,\ldots ,M$, with $G_0 \coloneqq \supp\, b_0$, and operators $T_m \in B(\mathcal{H}_r)$, for $m=1,\ldots , M$, such that $(T_m f)(x) = f(x)$ for $x\in \Omega'$, with the following property:
For sufficiently small $\| \psi_0 \|_{H^r}$, we can use Lemma~\ref{L01} to construct, for each $m=1,\ldots , M$, the forward and backward solutions $\pmf$ and $\pmb$ on the time intervals $[0,1]$ and $[1,2]$, respectively, to the MHD system \eqref{EQ07a}--\eqref{EQ26}, perturbed around 
\[
\us_{x_m}(\cdot ,t) \quad \text{ and } \quad -\us_{x_m}(\cdot ,2-t ),
\]
respectively, such that 
\eqnb\label{bb}
b_{\rm b}^{(m)} (x ,2) =0 \qquad \text{ for }x\not \in G_m
\eqne
where $\pmb = (u_{\rm b}^{(m)}, b_{\rm b }^{(m)})$,
\[
\pmb (\cdot ,1 ) = T_m (\pmf (\cdot , 1)),
\]
\eqnb\label{ID_pmf}
\pmf (\cdot ,0 ) \coloneqq \begin{cases}
\psi_0 &m=1,\\
\psi_{\rm b}^{(m-1)}(\cdot , 2)\qquad &m>1
\end{cases}\eqne
for every $m\in \{ 1, \ldots , M\}$,
and
\eqnb\label{af}
\| \pmf (\cdot ,1) \|_{H^r} \lec C_m \| \pmf (\cdot ,0) \|_{H^r}, 
\eqne
\eqnb\label{ab}
\| \pmb (\cdot ,2) \|_{H^r} \lec C_m \| \pmb (\cdot ,1) \|_{H^r},
\eqne
by Lemma~\ref{L01}, provided $\| \psi_0 \|_{H^r}$ is sufficiently small.

Then Theorem~\ref{T01} follows by setting
\[
\psi (\cdot ,t) \coloneqq \begin{cases}
\pmf (\cdot, t-(2m-2))\qquad &\text{ for }t\in [2m-2,2m-1],\\
\pmb (\cdot, t-(2m-1))\qquad &\text{ for }t\in [2m-1,2m],
\end{cases}
\]
\[
\us (\cdot ,t) \coloneqq \begin{cases}
\us_{x_m} (\cdot, t-(2m-2))\qquad &\text{ for }t\in [2m-2,2m-1],\\
-\us (\cdot, 2m- t )\qquad &\text{ for }t\in [2m-1,2m]
\end{cases}
\]
for $t\in (2m-2,2m]$ and $m\in \{ 1, \ldots , M \}$. Indeed, $\psi + (\us ,0) \in C([0,2M]; H^r (\Omega' ))$ is then a solution to the incompressible ideal MHD \eqref{mhd} with the boundary controls $(k,l) \coloneqq \left. \left( \psi + (\us ,0) \right)\right|_{\Gamma} \in C([0,T_0];H^{r-1/2} (\Gamma ))$ and $b(2M) =0$ by \eqref{bb} since $G_M=H_M=\emptyset$.\\

\noindent\texttt{Step~7a.} We construct the forward solutions~$\pmf$.\\

Namely, given $\pmf (\cdot , 0)$ sufficiently small in $H^r$ and $b_{\rm f}^{(m)} (x,0)=0$ for $x\not \in G_{m-1} $, we construct a solution to the perturbed ideal MHD system \eqref{EQ07a} around $U_{x_m}(\cdot ,t)$ for $t\in [0,1]$. 

It might seem that this step follows directly from the local well-posedness Lemma~\ref{L01}, but it is not clear why $\div \,b_{\rm f}^{(m)}(\cdot ,t)$ would remain $0$ for all $t\in [0,1]$ (recall~\eqref{div_b}), since the support of $b_{\rm f}^{(m)}$ could intersect $\supp\, U_{x_m}$ at some $t<1$. To overcome this problem, we introduce the notion of a ``tentacle-cutting procedure'', which ``removes'' a part of the support of $b_{\rm f}^{(m)}$ every time it approaches $\supp\, U_{x_m}$ (i.e., when $b$ ``pokes a tentacle at $U_{x_m}$''). As a consequence, the support of $b_{\rm f}^{(m)}$ will increase a slightly at each such time instance, but this we will  keep under control. \\

To be precise, we first set
\eqnb\label{Em_def}
E_m \subset \subset \Omega' \setminus \overline{\Omega } \text{ be such that }  \supp U_{x_m}(\cdot , t) \subset \subset E_m
    \comma t\in [0,1],
\eqne
and let $\delta_m \in (0,1)$ be sufficiently small so that 
\[
E_m' \coloneqq E_m + B(\delta_m )   \subset \Omega'\setminus \overline{\Omega }.
\]

We let the zeroth cutting time be $0$, i.e., we set $\tau_0 \coloneqq 0$ and
\[
F_m^{(0)} (0)  \coloneqq G_{m-1}. 
\]
Given $l\geq 1$ such that $\tau_0,\ldots , \tau_{l-1} \in [0,1)$, we consider $F_m^{(l-1)}(\tau_{l-1})$ (i.e., the result of the $(l-1)$-th cutting), and we assume that 
\[\left( \Phi^{U_{x_m}} (\cdot , \tau_{l-1} )\right)^{-1} (F_m^{(l-1)}(\tau_{l-1})) \subset \subset H_{m-1}.\]
We propagate $F_m^{(l-1)}(\tau_{l-1})$ via the velocity $U_{x_m}(\cdot , t)$ from time $\tau_{l-1}$, that is, we define
\[
F_m^{(l-1)} (t) \coloneqq \Phi^{U_{x_m}} \left( F_m^{(l-1)}(\tau_{l-1}) ,t , \tau_{l-1} \right)  
\]
for $t\in (\tau_{l-1},1]$, and we consider the first time when $F_m^{(l-1)} (t)$ approaches $E_m$, i.e., we set 
\[
\tau_{l}\coloneqq \inf \{ t\in [\tau_{l-1},1] \colon F_m^{(l-1)}(t) \cap E_m \ne \emptyset \}
\]
if the set on the right-hand side is nonempty. On the other hand, if it is empty then we let $F_m^{(l)}(t)$ evolve until $t=1$ and set  $\tau_{l}\coloneqq 1$.

Given $\delta_{m,l}\in (0,\delta_m/4)$, we set
\[
\mathcal{R}_{m,l} \coloneqq \left( \p E_m'+B(\delta_{m,l}  ) \right) \cap \Omega ,
\] 
and we let $\chi_{m,l} \in C_0^\infty (\overline{\Omega},[0,1])$ be such that $\chi_{m,l} =1$ on $E_m' $ and $\chi_{m,l} =0$ outside $E_m'\cup \mathcal{R}_m $. We fix $\delta_{m,l}$ sufficiently small so that there exists an open set $W_m^{(l)} \subset \mathcal{R}_{m,l}$ such that
\eqnb\label{wound1}
\p E_m' \cap F_m^{(l-1)}(\tau_{l}) \subset W_m^{(l)}
\eqne
(so that $W_m^{(l)}$ is a neighbourhood of this part of $\p E_m'$ (i.e., the ``wound'' from the tentacle cutting)),
\eqnb\label{wound_big}
F_m^{(l-1)}(\tau_{l}) \cap \supp\, \na \chi_{m,l} \subset W_m^{(l)}
\eqne
(so that the ``wound'' is big enough to guarantee the divergence free condition for $b$ after cutting; see the comments below)
and 
\eqnb\label{wound_small}
\left( \Phi^{U_{x_m}} (\cdot , \tau_{l+1} ) \right)^{-1} (F_m^{(l)} (\tau_{l} ))\subset  \subset H_{m-1} 
,
\eqne
where
\[
F_m^{(l)} (\tau_{l} ) \coloneqq (F_m^{(l-1)}(\tau_{l}) \setminus E_m' )\cup W_m^{(l)}
\]
(i.e., the wound $W_m^{(l)}$ is small enough so that, if we would flow it back to $t=0$, it would remain strictly inside the part of $\Omega$ in which we still did not cancel $b$); see Figure~\ref{fig_tent} below for a sketch.

\includegraphics[width=\textwidth]{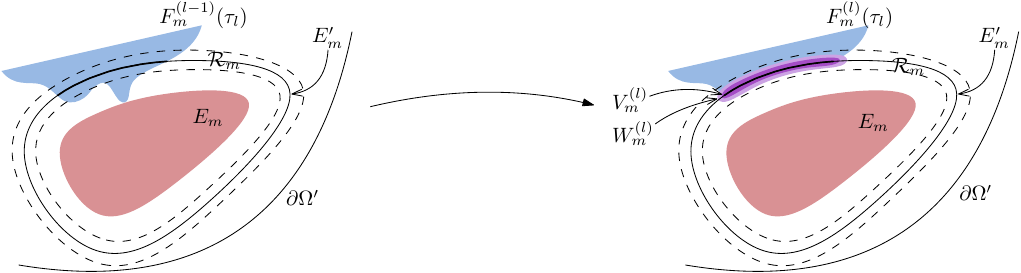}
  \captionof{figure}{A sketch of the tentacle-cutting procedure.}\label{fig_tent} 

We now let $V_m^{(l)}$ be any open subset of $ W_m^{(l)} $ such that  $V_m^{(l)} \subset \subset W_m^{(l)} $.
We now define the $l$-th tentacle-cutting operator $T_{m,l} \colon \mathcal{H}_r \to \mathcal{H}_r$ as
\[
b\mapsto T (b(1-\chi_{m,l} )),
\]
where $T$ is the cancelling operator of Step~6 applied with $V\coloneqq V_m^{(l)}$, $W\coloneqq W_m^{(l)}$, $Q\coloneqq \Omega$.

We note that the point of the tentacle-cutting operator is to make sure that the support of $b(t)$ never touches the support of $\div\,U_{x_m}$, so that the divergence-free property of $b$ is preserved. Indeed, if $b$ is any divergence-free vector field such that $b=0$ outside of $F_m^{(l-1)}(\tau_{l})$ then $T_{m,l} b$ is divergence-free  (by the construction of the cancelling operator $T$, recall Step 6).

We emphasize that the only requirement for smallness of $\delta_{m,l}$ comes from \eqref{wound_big}, since the smallness of $W_m^{(l)}$ (as required by \eqref{wound_small}) demands that $\chi_{m,l}$ goes from $1$ to $0$ over a very short distance. We emphasize that all the steps (i.e., the sets $E_m$, $E_m'$, $W_m^{(l)}$, $F_m$, the cutoff functions $\chi_{m,l}$, and the values of $\delta_m$, $\delta_{m,l}$, $\| T_{m,l} \|_{B(\mathcal{H}_r)}$) of the above tentacle-cutting procedure are independent of $\pmf$, as they depend only on $G_m$ and the return flow~$U_{x_m}$. In particular, the total number $L$ of the tentacle-cutting operators is bounded by 
\[
\frac{2 \max \{ \| U_{x_m}(\cdot, t)  \|_{L^\infty} \colon t\in [0,1] \} }{\delta_m} <\infty ,
\]
as $\delta_{m,l}< \delta_m/4$, so that, during time interval $(\tau_{l-1},\tau_{l})$, particles in $F_m^{(l-1)}(t)$ travel at least the distance $\delta_m/2$ with velocity at most $|U_{x_m} (t)|$. Let $F_m \subset \Omega$ be any open set such that
\eqnb\label{Fm_def}
F_m^{(L)}(1) \subset \subset F_m \quad \text{ and }\quad \left( \Phi^{U_{x_m}} (\cdot , 1) \right)^{-1} (F_m) \subset \subset H_{m-1}.
\eqne

We can now define $\pmf $ by applying the local well-posedness Lemma~\ref{L01} on each interval $[\tau_{l-1},\tau_{l}]$, where $l=1,\ldots , L+1$, and $\tau_{L+1}\coloneqq 1$. Namely, on $[\tau_{l-1},\tau_{l})$ we let $\pmf \coloneqq \psi_{\rm f}^{(m,l)}$, where $\psi_{\rm f}^{(m,l)}$ is the unique solution to the perturbed MHD system \eqref{EQ07a} around the return  flow $U_{x_m}(\cdot ,t)$ with the initial data
\[
\begin{cases}
\pmf (\cdot ,0) \qquad &l=1\qquad \text{(recall~\eqref{ID_pmf})},\\
\left( u_{\rm f}^{(m,l-1)}, T_{m,l-1}\left( b_{\rm f}^{(m,l-1)} (\cdot , \tau_{l-1})\right)\right) \qquad&l>1,
\end{cases}
\]
and we assume $\| \pmf (\cdot , 0 ) \|_{H^r}$ to be sufficiently small so that 
\[ \supp\, b_{\rm f}^{(m)}(\cdot ,t) \cap E_m =\emptyset\quad \text{ for all }t\in [0,1],
\]
and
\[
\supp\, b_{\rm f}^{(m)} (\cdot , 1) \subset F_m 
\]
which is possible by \eqref{Em_def} since taking $\| \pmf (\cdot , 0 ) \|_{H^r}$ small guarantees that $\| u_{\rm f}^{(m)} (\cdot ,t)\|_{H^r}$ remains small for all $t\in [0,1]$, and so the particle trajectories of the velocity $u_{\rm f}^{(m)}+U_{x_m} $ are close to the trajectories of~$U_{x_m}$.\\

\noindent\texttt{Step~7b.} We construct the backward solutions~$\pmb$.\\

Namely we construct an operator $T_m \in B(\mathcal{H}_r )$, for $m=1,\ldots ,M$, 
such that $T_m f = f$ on $\Omega'$, and a set $G_m \subset \Omega$ such that $G_m \subset \subset H_m$ (recall~\eqref{gmhm}) with the following property: Let $\pmb$ be the solution of the MHD system \eqref{EQ07a} on time interval $[1,2]$, perturbed around 
\[ \mathcal{V}_m (t)\coloneqq -U_{x_m} (\cdot ,2-t)\]
i.e., around the velocity field leading particles back to where they started at $t=0$, with the initial condition
\[
\pmb (\cdot , 1) \coloneqq  \left( u_{\rm f}^{(m)} (\cdot ,1) , T_m \left(  b_{\rm f}^{(m)} (\cdot ,1) \right)\right). 
\]
Then $b_{\rm b}^{(m)} (x ,2) =0$, for $x\in \Omega\setminus G_m$.

We now define~$T_m$. Given $\varepsilon_m \in (0,1)$, let 
\[
\mathcal{U}_m \coloneqq \left( \Phi^{U_{x_m}} (\p B_m ,1 )+B(\varepsilon_m ) \right) \cap \Omega'
\]
and let $\chi_m\in C_0^\infty (\overline{\Omega} , [0,1])$  be such that $\chi_m =1$ on $\Phi^{U_{x_m}} ( B_m ,1 )$ and $\chi_m =0$ outside $\Phi^{U_{x_m}} (\p B_m ,1 ) \cup \mathcal{U}_m$. We fix $\varepsilon_m>0$ sufficiently small so that there exists an open set $W_m \subset \mathcal{U}_m$ such that
\eqnb\label{cancelling_b}
\begin{split}
\Phi^{U_{x_m}} ( \p B_m ,1 ) \cap F_m &\subset W_m,\\
F_m \cap \supp\, \na \chi_{m,l} &\subset W_m,\\
 \Phi^{\mathcal{V}_m} (F_m',2, 1 ) & \subset  \subset H_{m},
\end{split}
\eqne
and
\eqnb\label{way_back}
\Phi^{\mathcal{V}_m} (F_m',t,1) \cap \supp\, \mathcal{V}_m (\cdot ,t ) =\emptyset \qquad \text{ for all }t\in [1,2],
\eqne
where
\[
F_m'\coloneqq (F_m \setminus \Phi^{U_{x_m}} (  B_m ,1 )) \cup W_m ;
\]
see Figure~\ref{fig_cancelling} for a sketch.

\includegraphics[width=\textwidth]{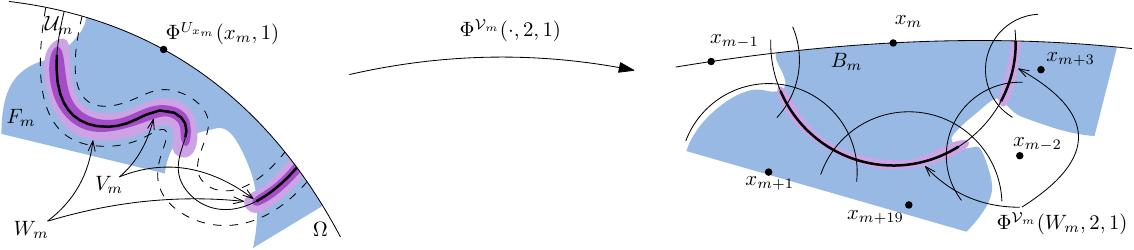}
  \captionof{figure}{A sketch of the cancelling procedure: The  grey set on the right-hand side is $\Phi^{\mathcal{V}_m} (F_m ,2,1)$, and the union of the blue set and the part of the grey set outside of $B_m$ is $\Phi^{\mathcal{V}_m} (F_m',2,1)$.}\label{fig_cancelling} 
  
The first three of the above properties are analogous to \eqref{wound1}--\eqref{wound_small}, except that now the last property is concerned with $H_m$ (rather than $H_{m-1}$). Indeed, the purpose of the current cancelling procedure is to extinguish $b$ on the most of $B_m$, i.e., to ensure that $b_{\rm b}^{(m)}(x , 2)\ne 0$ only for $x\in H_m$, rather than to ensure the support of $b_{\rm b}^{(m)}$ stays away from $\supp\,U_{x_m}$ (which is guaranteed by Step~7a). The property~\eqref{way_back} is necessary to guarantee that $\supp\, b_{\rm b }^{(m)}(\cdot ,t)$ (where $\pmb $ is defined below) will not touch $\supp\, \mathcal{V}_m(\cdot ,t)$ on the way back, i.e., for $t\in [1,2]$, so that $\div b_{\rm b }^{(m)}=0$ for all such $t$'s. 

We now let $V_m$ be any open subset of $W_m$ such that $V_m\subset \subset W_m$, and we define $T_m \colon \mathcal{H}_r \to \mathcal{H}_r$ as 
\[
b \mapsto T(b(1-\chi_m )),
\]
where $T$ is the cancelling operator of Step~6, applied with $V\coloneqq V_m$, $W\coloneqq W_m$, $Q\coloneqq \Omega$. 
We also pick an open set $G_m\subset \Omega$ such that
\[
 \Phi^{\mathcal{V}_m} (F_m',2, 1 ) \subset  \subset G_m \subset  \subset H_{m},
\]
which is possible by the last property in~\eqref{cancelling_b}.

As in Step~7a, we emphasize that the choice of $\varepsilon_m$, $\chi_m$, $W_m$, $V_m$ depends only on~$m$. We also see that 
$b_{\rm b}^{(m)} (\cdot , 1)$ is divergence free (by the second property in \eqref{cancelling_b}, as in Step~7a above), and, if $\| \pmf (\cdot , 1)\|_{H^r}$ is sufficiently small, then $b_{\rm b}^{(m)} (\cdot , t)$ remains divergence-free for all $t\in [1,2]$ (by~\eqref{way_back}) and $b_{\rm b}^{(m)} (x , t)\ne 0$ only for $x\in G_m$ (by the last property in~\eqref{cancelling_b}), as required. We note that, similarly to Step~7a above, this is possible, as taking $\| \pmf (\cdot , 1)\|_{H^r}$ small guarantees that $\| \pmb (\cdot , 1)\|_{H^r}$ is small (as $T_m\in B(\mathcal{H}_r)$), and so also $\| u_{\rm b}^{(m)} (\cdot ,t )\|_{H^r}$ remains small for all $t\in [0,1]$, which in turn guarantees that the particle trajectories of velocity $u_{\rm b}^{(m)} + \mathcal{V}_m$ remain close to the trajectories of~$\mathcal{V}_m$.

This concludes the construction of the forward and background solutions $\pmf$, $\pmb$, and so also concludes Step~7 and the proof of Theorem~\ref{T01} in the $3$D case.\\

In the remainder of this section, we prove the claims used in  the above proof.

\subsection{Construction of the background flow $u_S$}\label{sec_uS}

Here, for each $a\in \oo $, we construct the return flow $\us_a$ of Step~3, namely $\us_a\in C ([0,T] ; H^{r+1} (\Omega ))$, satisfying the forced Euler equation problem~\eqref{pert_Euler}. Namely, we set
\[
\us_a \coloneqq \nabla \theta,
\]
where $\theta $ is given by the following lemma.

\begin{lemma}[Glass flow]\label{L03}
For every $a\in \ooo$ there exists $\theta\in C ([0,T] ; H^{r+2} (\Omega ))$ such that $\mathrm{supp}\,\theta \subset \widetilde{\Omega } \times (0,1)$, $\theta=0$ for $t\in [0,1/4]\cup [3/4,1]$,
\[\begin{split}
\Delta \theta &=0 \qquad \text{ in } \overline{\Omega } \times [0,1],\\
\p_{\nn} \theta &=0 \qquad \text{ on } (\p \Omega \setminus \Gamma_0 )\times [0,1],\\
\Phi^{\nabla \theta } (a,1)&\in \overline{\widetilde{\Omega}} \setminus \overline{\Omega } .
\end{split}
\]
\end{lemma}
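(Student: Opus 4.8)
The plan is to construct $\theta$ essentially as a time-modulated superposition of finitely many harmonic functions whose gradients, at the point $a$, prescribe a desired velocity; iterating along a path that leaves $\overline{\Omega}$ then yields the trajectory condition. First I would fix a smooth path $\gamma\colon[1/4,3/4]\to\overline{\Omega}$ with $\gamma(1/4)=a$ and $\gamma(3/4)\in\overline{\widetilde\Omega}\setminus\overline{\Omega}$, which respects the constraint $\gamma(s)\in\overline{\Omega'}$ when near $\p\Omega'\setminus\Gamma_0$ (so that the tangency requirement on $\p_\nn\theta$ is compatible with moving the particle); such a path exists because $\widetilde\Omega\setminus\overline\Omega$ is reachable from $a$ through the interior, and we may reparametrize so that $\gamma\equiv a$ near $s=1/4$ and $\gamma\equiv\gamma(3/4)$ near $s=3/4$. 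The key is the solvability of \eqref{theta_problem}: given any target point $\overline x\in\overline{\Omega'}$ and any admissible vector $v$ (arbitrary in $\R^3$ if $\overline x\in\Omega'$, tangential if $\overline x\in\p\Omega'\setminus\Gamma_0$), there is a harmonic $\theta_*\in H^{r+2}$ with $\p_\nn\theta_*=0$ on $\p\Omega'\setminus\Gamma_0$ and $\nabla\theta_*(\overline x)=v$. This follows from Lemma~\ref{L_CS} (or standard elliptic theory for the Neumann problem on the $H^{r+2}$ domain): one first solves a Neumann problem to get a harmonic field with prescribed (nonzero) mean flux through $\Gamma_0$ — which is needed precisely because $\p\Omega'\setminus\Gamma_0$ is impermeable and a nontrivial interior gradient forces flux through $\Gamma_0$ — and then uses that the evaluation map $\theta_*\mapsto\nabla\theta_*(\overline x)$ on the (infinite-dimensional) space of such harmonic fields is onto the admissible subspace, by a unique-continuation / Runge-type argument.

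Next I would set up the time-dependence. Choose $0=s_0<s_1<\dots<s_l=1$ as a partition of a rescaled interval covering $[1/4,3/4]$, and on each subinterval use a single profile $\nabla\theta_i$ with a scalar cutoff $h_i(t)\ge0$ supported in that subinterval, so that $\us_a=\sum_{i=1}^l h_i(t)\nabla\theta_i(x)$ as in \eqref{temp00}. Because $\nabla\theta_i$ is frozen in space, the flow $\Phi^{\us_a}(a,\cdot)$ over the $i$-th subinterval is an integral curve of the autonomous field $\nabla\theta_i$ reparametrized by $\int h_i$; by the classical lemma of Coron \cite[Lemma~A.2]{C3} and Glass \cite[Lemma~6.2]{G1}, one can choose the $\theta_i$'s (with $\nabla\theta_i$ prescribed only at the current position of the particle, using the solvability above) and the amplitudes $h_i$ so that the composed flow follows $\gamma$ as closely as desired, and in particular $\Phi^{\nabla\theta}(a,1)=\gamma(3/4)\in\overline{\widetilde\Omega}\setminus\overline\Omega$. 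Since each $\theta_i$ is independent of $t$ and $h_i\in C^\infty_c$, we get $\theta\in C([0,T];H^{r+2}(\Omega))$, $\Delta\theta=0$ on $\overline\Omega\times[0,1]$, $\p_\nn\theta=0$ on $(\p\Omega\setminus\Gamma_0)\times[0,1]$, and, by placing all $h_i$'s inside $(1/4,3/4)$ and arranging $\supp_x\theta_i\subset\widetilde\Omega$, also $\supp\theta\subset\widetilde\Omega\times(1/4,3/4)\subset\widetilde\Omega\times(0,1)$ and $\theta=0$ for $t\in[0,1/4]\cup[3/4,1]$.

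I expect the main obstacle to be the surjectivity of the gradient-evaluation for the Neumann harmonic fields on the $H^{r+2}$ (merely Sobolev, not smooth) domain — i.e.\ rigorously producing, for each admissible $(\overline x,v)$, a harmonic $\theta_*$ with $\p_\nn\theta_*=0$ on $\p\Omega'\setminus\Gamma_0$ and $\nabla\theta_*(\overline x)=v$, uniformly enough that the iterative flow-tracking argument closes. One handles this by a dimension count: the space of such harmonic fields has image under $\theta_*\mapsto\nabla\theta_*(\overline x)$ that is a subspace of $\R^3$ (resp.\ $T_{\overline x}\p\Omega'$), and if it were proper there would be a covector $\xi$ annihilating every $\nabla\theta_*(\overline x)$, which — testing against a sequence of harmonic fields concentrating their flux near $\overline x$ — contradicts unique continuation; the boundary case $\overline x\in\p\Omega'\setminus\Gamma_0$ is handled similarly after reflecting across the boundary using the homogeneous Neumann condition. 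The remaining points — gluing the finitely many autonomous pieces, the $C([0,T];H^{r+2})$ regularity, and the support/vanishing properties in time — are then routine, and the finiteness of the sum in \eqref{temp00} is automatic from the construction, which is the feature needed later for the analytic-domain approximation.
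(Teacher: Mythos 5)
Your proposal follows essentially the same route as the paper's proof: both write $\theta(x,t)=\sum_{i}h_i(t)\theta_i(x)$ with finitely many harmonic $\theta_i$'s satisfying the homogeneous Neumann condition away from $\Gamma_0$, obtained by prescribing $\nabla\theta_i$ at points along a path from $a$ to the exterior, and both rely on the surjectivity of the gradient-evaluation map for such harmonic functions (the paper cites Glass's Lemmas 6.1--6.2 of \cite{G1}, which are proved by exactly the kind of duality/unique-continuation argument you sketch). The only cosmetic difference is that the paper upgrades approximate tracking of the path to exact tracking via a second corrector profile $\theta_{v_2}$ at each node, whereas you stop at approximate tracking — which suffices here since the target set $\overline{\widetilde\Omega}\setminus\overline\Omega$ is relatively open.
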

(Recall \eqref{part_traj} that $\Phi^{\nabla \theta}$ denotes the flow of~$\nabla \theta$.)
\begin{proof}
Suppose that $a\in \partial \oo$, let $b\in \partial  \wo \setminus \ooo$, and let $F\colon [0,1]\to \partial{\wo}$ denote a parameterization of a Sobolev smooth path connecting $a$ and $b$, i.e., $F(0)=a$, $F(1)=b$, $F'(t)\ne 0$ for all $t\in [0,1]$.
Note that
\eqnb\label{tangent_space}
\left\lbrace \nabla \theta (a) \colon \theta \in H^{r+2} (\wo ; \R ), \Delta \theta =0 \text{ in } \oo \text{ and } \p_{\nn} \theta =0 \text{ on } \p \oo\setminus \Gamma_0 \right\rbrace = T_a (\p \Omega )
;
\eqne
see~\cite[Lemma~6.2]{G1} for a proof. It follows from \eqref{tangent_space} that there exist $l\in \N$, $h_1,\ldots , h_l\in C ([0,1]; \R )$, $\theta_1,\ldots , \theta_l$ such that
\eqnb\label{choice1}
\begin{cases}
\Delta \theta_i =0 \qquad &\text{ in }\Omega \text{ and }\\
\p_{\nn} \theta_i =0  &\text{ on }\p \Omega\setminus \Gamma_0 
,
\end{cases}
\eqne
for all $i\in \{ 1, \ldots , l \}$, and there exists $\varepsilon >0$ such that
\eqnb\label{choice2}
\Phi^{\nabla \theta } (a,t) = F(t) \text{ for } t\in [0,t_0+\varepsilon ],
\eqne
where $t_0 \coloneqq \inf \{ t\in [0,1] : F(t) \not \in \overline\Omega \}$ is the first time the curve $F([0,1])$ leaves $\Omega$, and 
\[
\theta (x,t) \coloneqq \sum_{i=1}^l h_i (t) \theta_i (x).
\]

We emphasize that such choice ensures that the particle trajectory $\Phi^{\nabla \theta } (a,t)$ follows $F(t)$ \emph{exactly} (i.e., for all $t\in [0,t_0+\varepsilon ]$), even though we only apply \eqref{tangent_space} for finitely many $a$'s. To see that such choice exists, consider, for $M\in \N$, a sequence of points $\{ x_m \}_{m=0}^M\subset F([0,1])$ such that $x_0=a$, $x_M=b$,$x_{m_0} = F(t_0)$ (for some $m_0\in \{ 0,\ldots , M\}$) and 
\eqnb\label{def_partition}
(B(x_m,r )\cap \p \widetilde{\Omega} )\cap (B(x_{m+1},r )\cap \p \widetilde{\Omega} )\ne \emptyset 
,
\eqne
for $m=0,\ldots , M-1$, for some $r>0$.

Given $m\in \{ 0, \ldots , M\}$, let $v_1,v_2\in T_{x_m}(\p \widetilde{\Omega })$ be a basis of $T_{x_m}(\p \widetilde{\Omega })$ such that $v_1= F'(t_m)$, where $t_m\in [0,1]$ is such that $F(t_m)=x_m$. Let $\theta_{v_1}$ be given by \eqref{tangent_space}, and let $g_1 \in C_0^\infty ((0,1);\R)$ be such that $g_1 (t)=1$ for $t$ in some neighbourhood of~$t_m$. Then the particle trajectory $\Phi^{g_1 \nabla \theta_{v_1}}(x_m,t,t_m)$ passes through $x_m$ at $t=t_m$ and approximates the path $F(t)$ for $t$ close to~$t_m$. To make sure that the particle trajectory follows $F(t)$ \emph{exactly} we can thus replace the velocity field $g_1 \nabla \theta_{v_1} $ by $g_1(t) \nabla \theta_{v_1} + g_2(t) \nabla \theta_{v_2}$, where $g_2 (t)$ is chosen appropriately  so that $\Phi^{g_1(t) \nabla \theta_{v_1} + g_2(t) \nabla \theta_{v_2}} (x_m , t,t_m) = F(t)$ for all $t$ in a neighbourhood of~$t_m$. Clearly, choosing  $M\in \N$ sufficiently large, $r>0$ sufficiently small (so that $\{\Phi^{b\nabla \theta_{v_1} + c \nabla \theta_{v_2}}(x_m,t,t_m ) \colon b,c \in [-1,1], t\in [t_m-\delta , t_m+\delta ] \} \supset B(x_m,r)\cap \p \widetilde{\Omega} $ for all $m$, for some $\delta >0$), and ensuring that functions $h_i$'s involve appropriate partition of unity, we obtain \eqref{choice1}--\eqref{choice2}. We note that \eqref{choice2} holds since  $F'(x_{m_0})$ points towards $\p \widetilde{\Omega} \setminus \overline{\Omega }$, and so  $\Phi^{\nabla \theta} (a, t)\not \in \overline{\Omega }$ for $t\in (t_{m_0},t_{m_0}+\varepsilon)$, for some $\varepsilon >0$.\\

Clearly, the claim follows by rescaling the time (as $\phi^{\nabla \theta } (a,t_0+\varepsilon) \in \overline{\widetilde{\Omega}}\setminus \overline{\Omega }$), or, alternatively, by (smoothly) extending $\nabla \theta$ by $0$ for $t\in [t_0+\varepsilon ,1]$.

If $a\in \Omega$, the claim follows in an analogous by noting that, for such $a$, the left-hand side of \eqref{tangent_space} equals to $\R^3$; see \cite[Lemma~6.1]{G1}. 
\end{proof}

\subsection{Proof of the local-wellposedness lemma}\label{sec_lwp}

Here we prove Lemma~\ref{L01}. In order to discuss our strategy, we first note that the classical $H^r$ estimate for the perturbed MHD system \eqref{EQ07a},
\eqnb
  \begin{split}
   \partial_{t} u + u\nabla u + \us \nabla u + u \nabla \us - b\nabla
   b + \nabla p &= 0,\\
      \partial_{t} b
    + u \nabla b
    - b \nabla u
    + \us \nabla b
    - b \nabla \us
    &=0,
    \\
    \nabla \cdot u
    = \nabla \cdot b &= 0
    \inon{in $\Omega$,}
    \\
    u\cdot\nn = b\cdot \nn &= 0
    \inon{on $\partial\Omega $}
   ,
  \end{split}
   \label{EQ07a_repeat}
  \eqne
is
\eqnb\label{001}
\frac{\d }{\d t} \| \psi (t) \|_{H^r} \leq C_r \left(  \| \psi (t) \|_{H^r}^2 + \| \us  \|_{H^r} \| \psi (t) \|_{H^r} \right),
\eqne
provided all the steps in the derivation of the $H^r$ can be justified. Recall that we use the short-hand notation \eqref{psi_notation}, $\psi(t) = (u(t),b(t))$, for a solution of \eqref{EQ07a_repeat} and $\psi_0 \coloneqq (u_0,b_0)$ for the initial data. 

 Noting that
\eqnb\label{def_g}
g(t) \coloneqq  \| \psi_0 \|_{H^r} \exp \left( C_r t (1+\sup_{s \geq 0 } \| U(s) \|_{H^{r+1}} )\right)
\eqne
satisfies 
\eqnb\label{gprime}
g' \geq C_r g (g + \| \us  \|_{H^r} ) \comma t\leq T_0 \coloneqq \frac{- \log \| \psi_0 \|_{H^r }}{C_r (1+\sup_{s\geq 0} \| U(s) \|_{H^{r+1}})}
\eqne
with the same initial data, this suggests that 
\[
\| \psi (t) \|_{H^r} \leq g(t)
\comma t\leq T_0 ,
\]
which suggests that the local well-posedness result holds. However, as mentioned in the introduction, the construction of solutions is a major difficulty in the case of a Sobolev domain~$\Omega$. To overcome this challenge, we will first construct a solution in the case when $\Omega $ is an analytic domain.

 We also assume that $u_0, b_0$ and $\us $ are analytic in the sense that there exists $\tau_0>0$ such that
 \eqnb\label{def_of_an}
 u_0, b_0 \in \tX (\tau_0 ) \cap \oY (\tau_0 ) \qquad \text{ and } \us \in C([0,T];\tX (\tau_0 ) \cap \oY (\tau_0 )),
 \eqne
 where the analytic spaces $\tX, \oY$, and also $X,Y, \tY$ are discussed in Section~\ref{sec_prelim_an}. We prove in Section~\ref{sec_constr_anan} below that under such assumption there exists a unique solution $\psi \in C([0,T_0];\tX (\tau (t)))$, for some choice of decreasing $\tau \colon [0,T_0] \to (0,\infty )$, which also satisfies the a~priori estimate \eqref{001} for $t\in [0,T_0]$. In Section~\ref{sec_constr_soban}, we then obtain a unique solution $\psi \in C([0,T_0];H^r ) $ to \eqref{EQ07a_repeat} for Sobolev initial data $\psi_0$ and Sobolev background $U$, considered on an analytic domain~$\Omega$. Finally, in Section~\ref{sec_constr_sobsob}, we construct a unique Sobolev solution considered on a Sobolev domain~$\Omega $.

\subsubsection{Construction of analytic solution on an analytic domain}\label{sec_constr_anan}

Here we suppose that $\Omega$ is analytic in the sense of \eqref{def_an_domain} and, given initial data $\psi_0$ and background $\us$ satisfying \eqref{def_of_an}, we find an analyticity radius function $\tau \in C^1 ([0,T_0];(0,\infty))$, and a unique solution $\psi \in C^0 ([0,T_0];\widetilde{X}(\tau(t)) )$ to \eqref{EQ07a_repeat} satisfying the \emph{analytic a priori estimate}
\eqnb\label{002}
\frac{\d }{\d t } \| \psi \|_{\tX } - \dot \tau \| \psi \|_{Y} \leq C_r \left(     \| \psi \|_{\tX }\| \psi \|_{\tY}  + \| \psi \|_{\tX }  \| \us \|_{\oY} + \| \us \|_{\tX } \| \psi \|_{\tY}  \right)
,
\eqne
for all $t\in [0,T_0]$, where $T_0$ is given in~\eqref{gprime}.

We emphasize that, even though we aim to find an analytic solution $\psi$, the time $T_0$ of existence depends only on the Sobolev norm $\| \psi_0 \|_{H^r}$ of the initial data. This is possible thanks to \emph{persistence of analyticity}; see \eqref{002a} below. \\

We first derive the analytic a~priori bound~\eqref{002}. To this end we apply $\p^\alpha$ to the first two equations of  \eqref{EQ07a} and multiply by $(\p^\alpha u,\p^\alpha b)$ to obtain 
  \begin{align}
  \begin{split}
   &
   \frac12
   \frac{\d}{\d t}
   \int
   (|\partial^i T^j u|^2
    + |\partial^i T^j b|^2)
    \\&\indeq
    =-
         \int ( (u+\us)\cdot \nabla  \partial^i T^j u) \cdot \partial^i T^j u
      -
   \int ((u+\us)\cdot \nabla \partial^i T^j  b)\cdot  \partial^i T^j b
  \\&\indeq\indeq
  +
     \int
       \left( (b\cdot \nabla  \partial^i T^j b )\cdot \partial^i T^j u + (b\cdot \nabla  \partial^i T^j u ) \cdot \partial^i T^j b   \right) 
        - \int S_{ij} (u+\us  ,u )\cdot \partial^i T^j u
	\\&\indeq\indeq
 - \int S_{ij} (u+\us ,b )\cdot \partial^i T^j b   +\int \left( S_{ij} (b, b) \cdot \partial^i T^j u + S_{ij} (b,u)\cdot \partial^i T^j b \right) 
   \\&\indeq\indeq
    -\int \left(u \cdot \nabla \partial^i T^j \us +S_{ij} (u,\us ) \right)\cdot \partial^i T^j u + \int \left( b \cdot \nabla \partial^i T^j \us  + S_{ij} (b,\us ) \right) \cdot \partial^i T^j b
   \\&\indeq\indeq
  +
   \int \partial^i T^j\nabla p \cdot \partial^i T^j u
   ,
  \end{split}
   \label{EQ08}
  \end{align}
  where we set
\eqnb\label{def_Salpha}
S_{ij} (v,w) \coloneqq \p^i T^j ((v\cdot \nabla )w )-v \cdot \nabla \p^i T^j w .
\eqne  
To estimate the terms involving $S(\cdot , \cdot )$ in \eqref{EQ08}, we will need use the inequality 
\eqnb\label{product_curved}
\sum_{i+j\geq r }c_{i,j}   \| S_{i j} (u,v)\|  \lec_r \| v \|_{\tY (\tau )} \| u \|_{\tX (\tau )} +\| v \|_{\tX (\tau )} \| u \|_{\tY (\tau )}
;
\eqne
see \cite[(3.15)]{KOS} for a proof. In order to estimate the pressure term, we note that, applying the divergence operator to  the  evolution equation for $u$ (i.e., the first equation in \eqref{EQ07a}) gives
  \begin{align}
  \begin{split}
   -\Delta p
   &=
   \nabla \cdot (u\cdot \nabla u
                  + \us\cdot  \nabla u
                  + u\cdot \nabla \us		  
                  - b\cdot \nabla b )
    =
    \nabla u \odot \nabla u
    + 2 \nabla \us \odot \nabla u
     - \nabla b \odot \nabla b + u\cdot \nabla (\div \us)
   ,
  \end{split}
   \label{EQ13}
  \end{align}
where we set
  \begin{equation}
   \nabla u \odot \nabla v
   \coloneqq
    (\nabla u)^{T}:\nabla v
   =
   \partial_{i} u_j \partial_{j} v_i
   \llabel{EQ14}
  \end{equation}
and we also used the facts $\nabla \cdot (u\nabla v) = \nabla u\odot \nabla v + u \cdot \nabla (\nabla \cdot v)$
and $\nabla u \odot \nabla v= \nabla v \odot \nabla u$. As for the boundary condition for $p$, we apply the dot product with the normal~$\nn$ to the evolution equation for $u$, which gives 
  \begin{align}
  \begin{split}
   &
   - \nabla p \cdot \nn
   = (u\cdot \nabla u + \us\cdot \nabla u + u\cdot\nabla \us - b\cdot\nabla b)\cdot \nn
   .
  \end{split}
   \label{EQ15}  
  \end{align}
We will use the following pressure bound.

\begin{lemma}[The pressure estimate in an analytic norm]\label{L_pressure}
Consider the solution $p$ of the Neumann problem
  \begin{align}
  \begin{split}
   &\Delta p = \nabla f_1 : \nabla f_2
   \hspace{1cm}\inon{in $\Omega$}
   \\&
   {\p_{\nn}} p = ((g_1\cdot \nabla g_2)\cdot \mathsf{n})|_{\partial\Omega}
   \inon{on $\partial\Omega$}
   ,
  \end{split}
   \label{EQ25_curved}
  \end{align}
with the normalizing condition $\int_\Omega p= 0$,
where $g$ is defined in $\Omega$,
with the necessary compatibility condition. Then we have
  \begin{align}
  \begin{split}
   &
       \Vert \nabla p\Vert_{ X(\tau )}
   \lec
    \Vert f_1\Vert_{\tX (\tau)}
    \Vert f_2\Vert_{\tX (\tau)}
    +  \Vert g_1 \Vert_{\tX (\tau)}\Vert g_2 \Vert_{\oY (\tau)}
 ,
  \end{split}
   \label{EQ27_curved}
  \end{align}
provided ${\epsilon}$ and $\overline{\epsilon }/\epsilon$ are sufficiently small positive numbers. Moreover, if also $g_1\cdot \mathsf{n}= g_2 \cdot \mathsf{n}=0$ then 
 \begin{align}
  \begin{split}
   &
       \Vert \nabla p\Vert_{ X(\tau )}
   \lec
    \Vert f_1\Vert_{\tX (\tau)}
    \Vert f_2\Vert_{\tX (\tau)}
    +  \Vert g_1 \Vert_{\tX(\tau)}\Vert g_2 \Vert_{\tX(\tau)}
   .
  \end{split}
   \label{EQ27_acurved}
  \end{align}\colb
\end{lemma}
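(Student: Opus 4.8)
I would deduce \eqref{EQ27_curved}--\eqref{EQ27_acurved} from two ingredients: an \emph{analytic elliptic regularity estimate} for the Neumann problem \eqref{EQ25_curved} on the analytic domain $\Omega$, and the \emph{analytic product and trace estimates} applied to the specific right-hand side $\na f_1:\na f_2$ and boundary datum $(g_1\cdot\na g_2)\cdot\nn$. The $H^r$ part of the claimed bounds is routine (recall $\|\cdot\|_{\tX(\tau)}=\|\cdot\|_{X(\tau)}+\|\cdot\|_{H^r}$, and similarly for $\oY$): it follows from the standard $H^r$ Neumann estimate $\|\na p\|_{H^r}\lec\|\na f_1:\na f_2\|_{H^{r-1}}+\|(g_1\cdot\na g_2)\cdot\nn\|_{H^{r-1/2}(\p\Omega)}$ together with the $H^r$ product rule and the trace theorem, both valid since $r>n/2$. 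Thus the content of the lemma is the bound on the analytic seminorm $\sum_{i+j\geq r}c_{i,j}\|\p^i T^j\na p\|$.

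For the analytic elliptic estimate I would apply $\p^i T^j$ to \eqref{EQ25_curved} and use the tangential-operator calculus. Writing $q\coloneqq T^j p$, the function $q$ solves a Neumann problem of the same type with right-hand side $T^j(\na f_1:\na f_2)+[\Delta,T^j]p$ and boundary datum $T^j((g_1\cdot\na g_2)\cdot\nn)+[\p_\nn,T^j]p$; since $T$ is a global analytic tangential vector field and $\nn\in\tX$ near $\p\Omega$ (recall \eqref{n_is_ana}), the commutators $[\Delta,T^j]$ and $[\p_\nn,T^j]$ involve strictly fewer tangential derivatives of $p$ while producing Taylor coefficients of analytic functions, which decay factorially. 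Applying a bounded number of elliptic-regularity steps to $q$ (trading normal derivatives for tangential ones, two at a time, by means of the equation $\Delta q=h$), multiplying by $c_{i,j}$, and summing over $i+j\geq r$, one reduces, as in \cite{CKV,KP2}, the estimate of $\|\na p\|_{X(\tau)}$ to: a weighted analytic sum of $\p^iT^j$ of the bulk forcing $\na f_1:\na f_2$, a corresponding weighted sum of $\p^iT^j$ of the boundary datum $(g_1\cdot\na g_2)\cdot\nn$ on $\p\Omega$, a term $C(\epsilon+\overline{\epsilon}/\epsilon)\|\na p\|_{X(\tau)}$ collecting the commutator contributions, and lower-order terms controlled by $H^r$ norms. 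For $\epsilon$ and $\overline{\epsilon}/\epsilon$ small the penultimate term is absorbed into the left-hand side.

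For the bulk term, the weighted sum $\sum c_{i,j}\|\p^i T^j(\na f_1:\na f_2)\|$ is controlled by the analytic product estimate in the spirit of \eqref{product_curved} and \cite[(3.15)]{KOS}, giving $\|f_1\|_{\tX(\tau)}\|f_2\|_{\tX(\tau)}$; the smallness of $\epsilon$ is what makes the Cauchy product of the two analytic series summable. For the boundary term I would combine the $L^2$-based trace inequality with the analyticity of $\nn$ (again \eqref{n_is_ana}) to pass back to bulk analytic norms; because the trace costs half a derivative, $g_2$ necessarily appears with one derivative more than $g_1$, which is precisely why $\oY(\tau)=Y(\tau)+H^r$ (the analytic norm ``one level up'', see \eqref{EQ19c}) occurs on $g_2$ in \eqref{EQ27_curved}, yielding $\|g_1\|_{\tX(\tau)}\|g_2\|_{\oY(\tau)}$. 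For the variant \eqref{EQ27_acurved}, when $g_1\cdot\nn=g_2\cdot\nn=0$ on $\p\Omega$ the vector $g_1$ is tangent to $\p\Omega$ there, so differentiating $g_2\cdot\nn=0$ along $g_1$ gives $(g_1\cdot\na g_2)\cdot\nn=-g_2\cdot\bigl((g_1\cdot\na)\nn\bigr)$ on $\p\Omega$; the right-hand side carries no derivative of $g_2$ (only the analytic $\nn$ is differentiated), so the half-derivative loss disappears and $\oY(\tau)$ is replaced by $\tX(\tau)$.

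\textbf{Main obstacle.} The principal difficulty is the bookkeeping in the summation: one must verify that every ``lower-order in $j$'' contribution --- the commutators with $T^j$ and with $\p_\nn$, the normal-to-tangential trades via the equation, and the boundary trace terms --- yields, after weighting by $c_{i,j}$ and summing, a factor of $\epsilon$ or $\overline{\epsilon}/\epsilon$ times $\|\na p\|_{X(\tau)}$, rather than a quantity growing with $i+j$. This rests on the factorial decay of the Taylor coefficients of the analytic coefficients of $T$ and of $\nn$, on the combinatorial identity quoted before \eqref{komatsu_product}, and on elementary inequalities among the weights $c_{i,j}$ in \eqref{EQ19c}. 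The secondary delicate point is tracking the half-derivative trace loss sharply enough to land $g_2$ in $\oY(\tau)$ --- and to eliminate the loss entirely in the tangential case \eqref{EQ27_acurved}.
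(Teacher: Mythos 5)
The paper does not prove Lemma~\ref{L_pressure}; it simply cites \cite[Lemma~3.3]{KOS}. Your sketch lays out the standard strategy of that reference and of \cite{CKV,KP2} — applying $\p^iT^j$ to the Neumann problem, trading normal for tangential derivatives via $\Delta p=h$, absorbing the $[\Delta,T^j]$ and $[\p_\nn,T^j]$ commutator sums into $\|\na p\|_{X(\tau)}$ for $\epsilon$ and $\overline\epsilon/\epsilon$ small, bounding the bulk by the analytic product estimate and the boundary datum by the analytic trace estimate together with the analyticity of $\nn$ from \eqref{n_is_ana} — and your identity $(g_1\cdot\nabla g_2)\cdot\nn=-g_2\cdot\bigl((g_1\cdot\nabla)\nn\bigr)$ on $\p\Omega$ is exactly the device that removes the derivative loss on $g_2$ in the tangential case \eqref{EQ27_acurved}. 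As far as can be checked against the paper itself, this is essentially the same approach, with your ``main obstacle'' paragraph accurately locating where the real work (the weighted combinatorial summation) lies.
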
  

\begin{proof}
See \cite[Lemma~3.3]{KOS}.
\end{proof}

Using Lemma~\ref{L_pressure}, we can estimate all components of the pressure function arising from the boundary value problem \eqref{EQ13}--\eqref{EQ15}, except for the term involving $\div\,\us$. To this end, we can use the following statement.

\begin{corollary}\label{cor_pressure}
Consider the solution $p$ of the Neumann problem
  \begin{align}
  \begin{split}
   &\Delta p = f_1 \cdot  \nabla f_2
   \hspace{1cm}\inon{in $\Omega$}
   \\&
   {\p_{\nn}} p = 0
   \inon{on $\partial\Omega$}
   ,
  \end{split}
   \label{EQ25_easier}
  \end{align}
with the normalizing condition $\int_\Omega p= 0$,
where $f_1,f_2$ satisfy $\int_{\p \Omega } (f_1 f_2) \cdot \nn =0$.  Then we have
  \begin{align}
  \begin{split}
   &
       \Vert \nabla p\Vert_{ X(\tau )}
   \lec
    \Vert f_1\Vert_{\tX  (\tau)}
    \Vert f_2\Vert_{\tX (\tau)},
  \end{split}
  \end{align}
provided ${\epsilon}$ and $\overline{\epsilon }/\epsilon$ are sufficiently small positive numbers. 
\end{corollary}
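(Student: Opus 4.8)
The plan is to reduce Corollary~\ref{cor_pressure} to the first pressure estimate of Lemma~\ref{L_pressure}, equation~\eqref{EQ27_curved}, by rewriting the right-hand side $f_1\cdot\nabla f_2$ in divergence form. Since $f_1$ need not be divergence-free in the stated generality, I would write $f_1\cdot\nabla f_2 = \div(f_2\otimes f_1) - f_2\,\div f_1$; this is not yet of the form $\nabla F_1:\nabla F_2$, so instead the cleaner route is to note that the Neumann problem \eqref{EQ25_easier} is linear in $p$ and split it into two pieces. However, the most economical approach is the following: observe that in the application (the term $u\cdot\nabla(\div\us)$ in \eqref{EQ13}) we actually have $f_1 = u$ which \emph{is} divergence-free and satisfies $u\cdot\nn=0$ on $\p\Omega$, and $f_2 = \div\us$; but the Corollary is stated without these hypotheses, so I must handle the general case. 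I would therefore integrate by parts once inside the weak formulation: for a test function $\phi$, $\int_\Omega (f_1\cdot\nabla f_2)\phi = -\int_\Omega f_2\, f_1\cdot\nabla\phi - \int_\Omega f_2\phi\,\div f_1 + \int_{\p\Omega} f_2\phi\, f_1\cdot\nn$, which shows that $\Delta p = f_1\cdot\nabla f_2$ with homogeneous Neumann data is equivalent (after the compatibility condition $\int_{\p\Omega}(f_1 f_2)\cdot\nn = 0$, which is exactly the hypothesis) to a problem whose data is $\div(f_2 f_1)$ plus the lower-order term $f_2\div f_1$. The first piece is covered by \eqref{EQ27_curved} with $g_1 = f_1$, $g_2 = f_2$ (taking $f_1=f_2\equiv 0$ in the $\nabla f_1:\nabla f_2$ slot), and the second piece is covered by another application of the same lemma.

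Concretely, the key steps in order are: (i) split $p = p^{(1)} + p^{(2)}$ where $\Delta p^{(1)} = \div(f_2 f_1)$ in $\Omega$ with $\p_\nn p^{(1)} = (f_2 f_1)\cdot\nn$ on $\p\Omega$ (so that the normal-derivative contributions cancel against the integration by parts) and $\Delta p^{(2)} = -f_2\,\div f_1$ with $\p_\nn p^{(2)} = 0$, each normalized to mean zero, checking that the compatibility condition for each sub-problem holds — for $p^{(1)}$ it is automatic since $\int_\Omega\div(f_2 f_1) = \int_{\p\Omega}(f_2 f_1)\cdot\nn$, and for $p^{(2)}$ it reads $\int_\Omega f_2\,\div f_1 = 0$, which again follows from $\int_{\p\Omega}(f_1 f_2)\cdot\nn = 0$ together with $\int_\Omega f_1\cdot\nabla f_2 = -\int_\Omega f_2\div f_1 + \int_{\p\Omega}(f_1 f_2)\cdot\nn$; (ii) apply \eqref{EQ27_curved} to $p^{(1)}$ with $g_1\coloneqq f_1$, $g_2\coloneqq f_2$ and the $\nabla f_1:\nabla f_2$ data set to zero, giving $\|\nabla p^{(1)}\|_{X(\tau)} \lec \|f_1\|_{\tX(\tau)}\|f_2\|_{\tX(\tau)}$; (iii) apply the pressure lemma (or just the elementary analytic elliptic estimate underlying it) to $p^{(2)}$, whose right-hand side $-f_2\div f_1$ is a product of $f_2 \in \tX(\tau)$ with a first derivative of $f_1$, and use the product estimate together with \eqref{an3} or \eqref{an2}-type bounds to control $\|\nabla p^{(2)}\|_{X(\tau)} \lec \|f_1\|_{\tX(\tau)}\|f_2\|_{\tX(\tau)}$ — note $\div f_1$ costs one derivative of $f_1$, which is exactly what $\oY(\tau) \hookrightarrow$ absorbs via $\|\nabla f_1\|_{\tX}\lec\|f_1\|_{\oY}\lec\|f_1\|_{\tX}$ only if one has the $\oY$ norm, so more carefully one keeps this term and recognizes it has the same scaling as the $g_1,g_2$ term in \eqref{EQ27_curved}; (iv) add the two bounds.

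Alternatively, and perhaps more in the spirit of the paper, I would simply observe that \eqref{EQ25_easier} is a special case of \eqref{EQ25_curved}: take $g_1 \coloneqq f_1$, $g_2\coloneqq f_2$ and $f_1\coloneqq 0$ (or any fixed analytic pair with zero product) in Lemma~\ref{L_pressure}, \emph{except} that the boundary datum in \eqref{EQ25_curved} is $((g_1\cdot\nabla g_2)\cdot\nn)|_{\p\Omega}$ rather than $0$. So the genuine content of the Corollary is that one can trade the nonzero Neumann datum $(f_1\cdot\nabla f_2)\cdot\nn$ for a zero datum at the price of the compatibility assumption — this is precisely the integration-by-parts identity above, which moves the boundary term into the interior. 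I would spell this out: letting $\tilde p$ solve \eqref{EQ25_curved} with $g_1=f_1$, $g_2=f_2$, the difference $p - \tilde p$ solves a Laplace equation with Neumann datum $-(f_1\cdot\nabla f_2)\cdot\nn$, and by the divergence theorem plus the hypothesis this is handled as the $p^{(2)}$ piece above.

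The main obstacle I expect is step (iii): controlling the lower-order term $f_2\,\div f_1$ in the analytic norm $X(\tau)$ without assuming $f_1$ is divergence-free. One must verify that the product estimate \eqref{product_curved} (or the simpler Komatsu product rule \eqref{komatsu_product} combined with \eqref{an3}) applied to $f_2 \cdot (\text{first derivative of } f_1)$ yields a bound of the form $\|f_1\|_{\tX(\tau)}\|f_2\|_{\tX(\tau)}$ rather than requiring the stronger $\oY$ norm on the right — this is where the condition ``$\overline\epsilon/\epsilon$ and $\epsilon$ sufficiently small'' is used, exactly as in Lemma~\ref{L_pressure}, to absorb the derivative loss. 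Everything else is bookkeeping: checking the two compatibility conditions (both of which follow from the single stated hypothesis $\int_{\p\Omega}(f_1 f_2)\cdot\nn = 0$) and invoking Lemma~\ref{L_pressure} twice.
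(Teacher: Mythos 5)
The paper gives no proof of this corollary — it is stated after Lemma~\ref{L_pressure} (itself imported from \cite[Lemma~3.3]{KOS}) and used without further argument — so I assess your proposal on its own. Your governing idea is sound: $f_1\cdot\nabla f_2$ should be written in divergence form so that, after integration by parts against the test function, a derivative passes to $\nabla\phi$ and one can hope for a $\tX$-$\tX$ bound. But the concrete steps break down. In step~(i) you set $\p_\nn p^{(1)}=(f_2 f_1)\cdot\nn$ and $\p_\nn p^{(2)}=0$, so $\p_\nn(p^{(1)}+p^{(2)})=(f_2 f_1)\cdot\nn$, which is not zero pointwise; the hypothesis controls only its boundary integral. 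Hence $p\neq p^{(1)}+p^{(2)}$, and fixing this by setting $\p_\nn p^{(2)}=-(f_2 f_1)\cdot\nn$ destroys the weak-form cancellation in that sub-problem. In step~(ii) you invoke \eqref{EQ27_curved} with $g_1\coloneqq f_1$, $g_2\coloneqq f_2$ to claim $\|\nabla p^{(1)}\|_{X(\tau)}\lesssim\|f_1\|_{\tX(\tau)}\|f_2\|_{\tX(\tau)}$, but \eqref{EQ27_curved} bounds the $g$-contribution by $\|g_1\|_{\tX(\tau)}\|g_2\|_{\oY(\tau)}$ — the $\oY$ norm on $g_2$, one derivative stronger than $\tX$, which is precisely the derivative gain the corollary is asserting. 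The $\tX$-$\tX$ refinement \eqref{EQ27_acurved} requires $g_1\cdot\nn=g_2\cdot\nn=0$, which does not even type-check here since $f_2$ is a scalar. Moreover, the problem you pose for $p^{(1)}$, with source $\div(f_2 f_1)$ and Neumann datum $(f_2 f_1)\cdot\nn$, is not an instance of \eqref{EQ25_curved}: neither the interior form $\nabla f_1:\nabla f_2$ nor the boundary form $(g_1\cdot\nabla g_2)\cdot\nn$ matches, so Lemma~\ref{L_pressure} cannot be cited literally.

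Your step-(iii) diagnosis, that the lower-order term $f_2\,\div f_1$ is the obstruction, is exactly right, and the resolution is structural rather than analytic. In the only place the corollary is invoked (the term $u\cdot\nabla(\div\us)$ in \eqref{EQ13}) one has $f_1=u$ with $\div u=0$ in $\Omega$ and $u\cdot\nn=0$ on $\p\Omega$, so $f_1\cdot\nabla f_2\equiv\div(f_2 f_1)$ with no remainder and $(f_2 f_1)\cdot\nn\equiv 0$ pointwise on $\p\Omega$. The weak formulation of \eqref{EQ25_easier} then reduces cleanly to $\int_\Omega\nabla p\cdot\nabla\phi=\int_\Omega (f_2 f_1)\cdot\nabla\phi$, a pure divergence-form problem, and running the analytic elliptic estimate underlying \cite[Lemma~3.3]{KOS} on it together with the algebra property of $\tX(\tau)$ gives $\|\nabla p\|_{X(\tau)}\lesssim\|f_2 f_1\|_{\tX(\tau)}\lesssim\|f_1\|_{\tX(\tau)}\|f_2\|_{\tX(\tau)}$. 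The stated integral condition $\int_{\p\Omega}(f_1 f_2)\cdot\nn=0$ alone is too weak to support your two-piece split; the corollary should be read with $\div f_1=0$ and $f_1\cdot\nn=0$ understood, as in the application, and then the argument is immediate without any need for a decomposition of $p$.
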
 
  Applying Lemma~\ref{L_pressure} and Corollary~\ref{cor_pressure} to \eqref{EQ13}--\eqref{EQ15} gives that 
\[
\| \na p \|_{X} \lec \| \psi \|_{\tX}^2 + \| \psi \|_{\tX} \left( \| \us \|_{\tX} + \| \div \us \|_{\tX } \right) 
.
\]
Thus, noting that all term in the second and third lines of \eqref{EQ08} that do not involve $U$ vanish, due to the divergence-free condition on $u$ and $b$, we can sum \eqref{EQ08} in $i$, $j$ with coefficients $c_{i,j}$ to obtain
\[\begin{split}
\frac{\d }{\d t } \| \psi \|_{\widetilde{X}} - \dot \tau \| \psi \|_{Y} &\lec_r \| \psi \|_{\widetilde{X}} \left( \| \div\, \us \|_{L^\infty } + \| \us \|_{\tY} + \| \psi \|_{\tY }   \right)
\\&\indeq
+  \| \us \|_{\tX }  \| \psi \|_{\tY}
+ \| \psi \|_{L^\infty } \sum_{i+j\geq r } c_{i,j} \| \na \p^i T^j \us  \| + \| \na p \|_{\tX }\\
&\lec_r  \| \psi \|_{\tX }  \| \us \|_{\oY} + \left(  \| \psi \|_{\tX } +  \| \us \|_{\tX } \right)  \| \psi \|_{\tY}  ,
\end{split}
\]
which shows~\eqref{002}. We can now choose $\tau (t)$ such that \eqref{002} lets us control the solution until the final time $T=T(\| \psi \|_{H^r})$. Namely, if $\tau$ satisfies
\eqnb\label{tau_choice}
-\dot \tau \geq 2 C_r \tau (\| \psi \|_{\tX}  + \| U \|_{\tX} )
,
\eqne
then \eqref{002} gives
\eqnb\label{002a}
\frac{\d }{\d t } \| \psi \|_{\tX } - \frac{\dot \tau}2  \| \psi \|_{Y } \leq C_r \left( \| \psi \|_{\tX } ( \| \psi \|_{H^r}+   \| \us \|_{\oY } ) + \| \psi \|_{H^r }\| \us \|_{\tX } \right).
\eqne
This shows that the $L^\infty_t \tX \cap L^1_t \tY$ norm of $\psi$ remains under control at least as long as $\| \psi (t) \|_{H^r}$ remains bounded, provided the analyticity radius $\tau (t)$ satisfies~\eqref{tau_choice}. As for the $H^r$ estimate, we have
\[
\frac{\d }{\d t} \| \psi \|_{H^r} \le_r  \| \psi \|_{H^r}^2 + \| \psi \|_{H^r} \| U \|_{H^{r+1}},
\] 
by \eqref{EQ08}, which in particular implies that 
\eqnb\label{002b}
\| \psi (t) \|_{H^r} \leq  \| \psi_0 \|_{H^r} \exp \left( C_r t (1+\sup_{s \geq 0 } \| U(s) \|_{H^{r+1}} )\right)=: g(t)
\comma t\leq \frac{- \log \| \psi_0 \|_{H^r }}{C_r (1+\sup_{s\geq 0} \| U(s) \|_{H^{r+1}})}.
\eqne
For such $t$, the inequality \eqref{002a} gives
\eqnb\label{002c}
\| \psi(t) \|_{\tX } \leq \left( \| \psi_0 \|_{\tX}+\int_0^t g(s) \| U(s) \|_{\tX (\tau (s))} \d s \right) \exp \left( C_r \int_0^t \left( g(s) + \| U(s) \|_{\oY (\tau (s))}\right) \d s \right) =: G(t)
\eqne
(recall~\eqref{ode_fact}), provided \eqref{tau_choice} holds. We can thus choose $\| \psi_0 \|_{H^r}$ sufficiently small, so that the analytic solution exists for a given time. Moreover, we can now fix $\tau (t)$, by imposing the ODE of a similar form to \eqref{tau_choice}, except with $\| \psi \|_{\tX}$ replaced by $G$, i.e., we define $\tau (t)$ by the ODE 
\eqnb\label{tau_choice_actual}
\dot \tau (t) = - C_r \tau (t) \left( G(t) + \| U(t) \|_{\tX (\tau (t))} \right)=:f(t),\qquad \tau (0) = \tau_0.
\eqne
With such choice of $\tau$, the a~priori estimate \eqref{002a} holds unconditionally, and moreover a simple Picard iteration (see \cite[Section~2.2]{KOS} for details) lets one construct a solution $\psi \in C([0,T_0];\tX (\tau ))$ such that $-\dot \tau \psi \in L^1((0,T_0);\tY (\tau ))$ and the upper bounds \eqref{002b}, \eqref{002c} hold.\\

\subsubsection{Construction of Sobolev solution on an analytic domain}\label{sec_constr_soban}

Here we prove the local well-posedness Lemma~\ref{L01} in the case when $\Omega$ is analytic (in the sense that \eqref{def_an_domain}). To this end, we first show how to approximate Sobolev vector fields by analytic ones.

\begin{lemma}[Analytic approximation lemma]\label{L_an_approx}
Let $\Omega$ be a bounded and analytic domain (in the sense of \eqref{def_an_domain}). There exists $\varepsilon_0>0$ such that, for every $\varepsilon>0$, there is $S_\varepsilon \in B(\mathcal{H}_r,\mathcal{H}_r)$ with $\| S_\varepsilon \| \lec 1$ such that, for sufficiently small $\varepsilon >0$, $S_{\varepsilon } u \in \oY (\varepsilon_0 )$ and
\[
\| S_\varepsilon u - u \|_{H^r} \to 0
\]
as $\varepsilon \to 0$. Moreover, there exist an operator $T_\varepsilon$ with the same properties as $S_\varepsilon$, except that $T\in B(V,V)$, where  
\[
V\coloneqq \{ u\in H^r (\Omega ) \colon u\cdot \nn =0 \text{ on }\p \Omega \}.
\]
\end{lemma}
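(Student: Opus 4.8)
The plan is to build $S_\varepsilon$ by mollification carried out in a coordinate system that respects the analytic structure of $\partial\Omega$, followed by a correction that restores the divergence-free and no-penetration constraints. First I would fix, using the analyticity of the distance function $d$ (recall~\eqref{def_an_domain}), an analytic diffeomorphism of a neighbourhood of $\overline\Omega$ onto a fixed reference domain $\Omega_\ast$ (e.g.\ straightening the boundary locally and patching with an analytic partition of unity); it suffices to work in the reference picture, since composition with an analytic map preserves membership in the analytic spaces $\oY(\cdot)$ with a comparable radius (this is exactly the mechanism behind~\eqref{n_is_ana}, cf.~\cite{KP2}). In that picture, given $u\in\mathcal H_r$, extend $u$ across $\partial\Omega_\ast$ to a compactly supported $H^r$ field on $\R^n$ by a bounded extension operator, convolve with the Gaussian kernel $G_\varepsilon(x)=\varepsilon^{-n}G(x/\varepsilon)$, and restrict. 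The Gaussian convolution $G_\varepsilon\ast u$ is entire in $x$ with a radius of analyticity that, after the standard heat-kernel estimate $\|\p^i T^j (G_\varepsilon\ast u)\|\lesssim (C/\varepsilon)^{i+j}\|u\|$ for $i+j>r$ combined with the $H^r$ bound for the low-order terms, gives $G_\varepsilon\ast u\in\oY(\varepsilon_0)$ for a fixed $\varepsilon_0$ depending only on $\varepsilon$; and $\|G_\varepsilon\ast u-u\|_{H^r}\to 0$ is classical. This already yields the operator $T_\varepsilon\in B(V,V)$ claimed at the end: the no-penetration condition $u\cdot\nn=0$ can be retained by using a reflection-type extension adapted to the straightened boundary (odd reflection of the normal component, even reflection of the tangential ones), so that $(G_\varepsilon\ast u)\cdot\nn=0$ on $\partial\Omega$, and no divergence constraint needs to be preserved for $V$.

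For $S_\varepsilon$ we must in addition keep $\div=0$. The natural route is to set $S_\varepsilon u\coloneqq G_\varepsilon\ast u - R_\varepsilon u$, where $R_\varepsilon u$ is the solution of a div--curl system produced by Lemma~\ref{L_CS}: since $\div(G_\varepsilon\ast u)=G_\varepsilon\ast(\div u)=0$ automatically (convolution commutes with $\div$, and $\div u=0$ in $\Omega$ once we extend $u$ divergence-freely to $\R^n$, which a Bogovski\u\i{}-type or Hodge extension permits), the only defect is the boundary value $(G_\varepsilon\ast u)\cdot\nn$ on $\partial\Omega$. Apply Lemma~\ref{L_CS} with $f=0$, $g=0$, $h=-(G_\varepsilon\ast u)\cdot\nn\in H^{r-1/2}(\partial\Omega)$ (the compatibility $\int_{\partial\Omega}h=\int_\Omega\div g=0$ holds because $\int_{\partial\Omega}(G_\varepsilon\ast u)\cdot\nn=\int_\Omega\div(G_\varepsilon\ast u)=0$) to get $R_\varepsilon u\in\mathcal H_r$-complement with $\div R_\varepsilon u=0$, $R_\varepsilon u\cdot\nn=-(G_\varepsilon\ast u)\cdot\nn$ on $\partial\Omega$, and $\|R_\varepsilon u\|_{H^r}\lesssim\|h\|_{H^{r-1/2}(\partial\Omega)}\lesssim\|(G_\varepsilon\ast u)\cdot\nn-u\cdot\nn\|_{H^{r-1/2}(\partial\Omega)}\to 0$ by the trace theorem and $\|G_\varepsilon\ast u-u\|_{H^r}\to0$. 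Then $S_\varepsilon u\in\mathcal H_r$, $\|S_\varepsilon\|\lesssim 1$, and $\|S_\varepsilon u-u\|_{H^r}\le\|G_\varepsilon\ast u-u\|_{H^r}+\|R_\varepsilon u\|_{H^r}\to0$. It remains to check $S_\varepsilon u\in\oY(\varepsilon_0)$: $G_\varepsilon\ast u$ is, and $R_\varepsilon u$ is harmonic-type data solved from Lemma~\ref{L_CS} with analytic boundary data (the boundary trace of an analytic field on an analytic boundary is analytic), so elliptic analytic regularity up to an analytic boundary gives $R_\varepsilon u\in\oY(\varepsilon_0')$ for some $\varepsilon_0'>0$; shrinking $\varepsilon_0$ to $\min(\varepsilon_0,\varepsilon_0')$ finishes the argument.

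The main obstacle I expect is the quantitative analytic bound on the correction term $R_\varepsilon u$: one needs that the div--curl solution operator of Lemma~\ref{L_CS}, when fed analytic data on an analytic domain, returns a field whose analyticity radius is bounded \emph{below uniformly in $\varepsilon$}, rather than degenerating as the data oscillates on scale $\varepsilon$. This requires either an analytic-elliptic-regularity statement with explicit Komatsu-type constants (in the spirit of the estimates \eqref{an1}--\eqref{an3} and the pressure Lemma~\ref{L_pressure}) applied to the boundary-value problem defining $R_\varepsilon u$, or the observation that $(G_\varepsilon\ast u)\cdot\nn$ on $\partial\Omega$, though it has $H^{r-1/2}$ norm only bounded (not small relative to $\varepsilon^{-1}$), is itself the boundary value of the \emph{globally} analytic field $G_\varepsilon\ast u$ whose analyticity radius is $\varepsilon_0$ independent of the oscillation scale — so the right formulation is to solve for the \emph{difference} $R_\varepsilon u$ with source data already in $\oY(\varepsilon_0)$ and invoke that the solution operator preserves $\oY(\varepsilon_0)$ up to a fixed loss. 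Handling this cleanly is the heart of the proof; everything else is the standard mollification-plus-Bogovski\u\i/Hodge-correction scheme.
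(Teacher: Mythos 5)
There are two concrete gaps in your proposal, both of which the paper's construction avoids by taking a different route.

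First, the $T_\varepsilon$ construction by odd/even reflection fails for the Sobolev index at hand. Odd reflection of the normal component (which vanishes on the boundary) and even reflection of the tangential components preserves at most $H^{s}$ for $s<\fractext{3}{2}$ across the interface: the second normal derivative of the odd extension jumps unless $\partial_{\nn}^2 u_{\nn}=0$ on $\partial\Omega$, which is not part of the definition of $V$. Since the lemma requires $r\geq 3$, the reflected field is not in $H^r(\R^n)$, so the subsequent mollification does not land in $H^r$ with a uniform bound, and the claimed $T_\varepsilon\in B(V,V)$ with $\|T_\varepsilon\|\lesssim 1$ does not follow. (Higher-order Hestenes-type reflections would restore $H^r$ but destroy the property $(G_\varepsilon\ast u)\cdot\nn=0$, so you cannot escape a correction step for $T_\varepsilon$ any more than for $S_\varepsilon$.) Second, the opening move -- straightening the boundary with analytic charts and "patching with an analytic partition of unity" -- cannot be carried out: nontrivial real-analytic functions cannot have compact support, so analytic partitions of unity do not exist, and this preparatory reduction to a reference domain is not available.

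The paper's proof sidesteps both issues. It does not straighten the boundary, does not need a divergence-free extension, and treats $S_\varepsilon$ and $T_\varepsilon$ by the same mechanism: extend by an ordinary (not structure-preserving) Sobolev extension operator, mollify by the heat kernel, and then define $S_\varepsilon u$ as the solution of a \emph{Stokes} boundary value problem
$-\Delta v+\nabla p=-\Delta u_\varepsilon$, $\div v=0$, $v\cdot\nn=0$, $v\times\nn=u_\varepsilon\times\nn$ on $\partial\Omega$, which simultaneously restores the divergence constraint and the no-penetration condition (for $T_\varepsilon$ one only changes the divergence datum to $\div u_\varepsilon-[\div u_\varepsilon]_\Omega$). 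The $H^r\to H^r$ bound and the convergence $\|S_\varepsilon u-u\|_{H^r}\to 0$ then follow from Stokes regularity (\cite[Proposition~2.2, Ch.~1]{T}) applied to the difference $w=S_\varepsilon u-u_\varepsilon$, and the analyticity $S_\varepsilon u\in\oY(\varepsilon_0)$ follows from the known analyticity of the Stokes solution operator on analytic domains, cited from \cite{JKL1,JKL2,KP2}. Your identification of "the analytic bound on the correction" as the crux is accurate -- the paper also outsources this to a regularity theorem -- but your div--curl correction via Lemma~\ref{L_CS} is a genuinely different elliptic problem (first-order, prescribing only $R\cdot\nn$) than the Stokes system used in the paper, and if you pursue it you would still need an analytic-regularity statement for that operator analogous to the Stokes one the paper cites.
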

\begin{proof}
We denote by $E\in B(H^r (\Omega ), H^r (\R^3))$ the Sobolev extension operator (see \cite[Theorem~5.22]{AF}). Given $u\in \mathcal{H}_r$, we set
\[
u_\varepsilon \coloneqq \Phi (\varepsilon ) \ast Eu,
\]
where $\Phi(x, t)\coloneqq (4\pi t)^{-3/2} \mathrm{e}^{-|x|^2/4t} $. We let $S_\varepsilon u\coloneqq v$ to be the unique $H^r$ solution to the Stokes system
\eqnb\label{stokes}
\begin{split} -\Delta v + \nabla p &= -\Delta u_\varepsilon ,\\
\div\, v &=0\qquad \text{ in }\Omega,\\
v\cdot \nn &=0 ,\\
v\times \nn &= u_\varepsilon \times \nn \qquad \text{ on }\p \Omega
;
\end{split}
\eqne
see~\cite[Proposition~2.2 in Ch.~1]{T}. Note that $\| v \|_{H^r (\Omega )} \lec \| u_\varepsilon \|_{H^r (\Omega )} \leq \| u_\varepsilon \|_{H^r (\R^3 )} \leq \| E u \|_{H^r (\R^3 )} \lec \| u \|_{H^r (\Omega )}$, which shows that $S_\varepsilon u \in \mathcal{H}_r$ for every $\varepsilon >0$. Moreover,
\[
\| S_\varepsilon u - u \|_{H^r} \leq \| S_\varepsilon u - u_\varepsilon  \|_{H^r} +\|  u_\varepsilon - u \|_{H^r(\Omega )} \to 0
,
\]
as $\varepsilon \to 0$, where used the approximation property of the heat kernel (see Appendix~6.5.1 in \cite{OP}, for example) to obtain convergence of the second term. As for the second term we see that $w\coloneqq S_\varepsilon u - u_\varepsilon $ is a solution to
\eqnb\label{stokes1}
\begin{split} -\Delta w + \nabla p &= 0 ,\\
\div\, w &=-\div \,u_{\varepsilon }   \qquad \text{ in }\Omega,\\
w\cdot \nn &=-u_\varepsilon \cdot \nn ,\\
w\times \nn &= 0 \qquad \text{ on }\p \Omega,
\end{split}
\eqne
which, by \cite[Proposition~2.2 in Ch.~1]{T}, gives 
\[\| w \|_{H^r (\Omega )} \lec \|\div\, u_{\varepsilon}  \|_{H^{r-1} (\Omega )}+\| u_{\varepsilon} \cdot \nn   \|_{H^{r-1/2} (\p \Omega )}  \to \|\div\, u  \|_{H^{r} (\Omega )} +\| u \cdot \nn   \|_{H^{r-1/2} (\p \Omega )}=0\]
as $\varepsilon \to 0$, since $ \,u_\varepsilon \to  u $ in $H^{r} (\R^3)$.

Furthermore, $S_\varepsilon u \in \oY (\varepsilon_0 )$ since $u_\varepsilon \in Y(\R^3;\varepsilon_0 )$ for sufficiently small $\varepsilon_0 = \varepsilon_0 (\Omega )$ (see~\cite{KP2}) and then $S_\varepsilon u \in \oY (\Omega ;\varepsilon_0 )$ by analyticity of $\Omega $ and analyticity of solutions to the Stokes problem (see \cite{JKL1,JKL2} and \cite{KP2} for details).

As for operator $T_\varepsilon$, we apply the same construction, except that the second equations in \eqref{stokes} and \eqref{stokes1} are replaced by $\div\, v =\div u_\varepsilon - [\div \,u_{\varepsilon } ]_\Omega$ and $\div\, w =-[\div \,u_{\varepsilon } ]_\Omega$, respectively, where $ [\div \,u_{\varepsilon } ]_\Omega \coloneqq \frac{1}{|\Omega |} \int_{\Omega } \div \,u_\varepsilon $.
\end{proof}

Using Lemma~\ref{L_an_approx}, we can use analytic approximations of $U,u_0,b_0$. Namely, given $U\in C([0,T_0]; V)$ and $u_0,b_0\in \mathcal{H}_r$ we consider a sequence $\varepsilon_k \to 0$ and a sequence of approximations
\[
\us_{k } \coloneqq  T_{\varepsilon_k} \us,\qquad u_k \coloneqq S_{\varepsilon_k } u_0 , \,b_k \coloneqq S_{\varepsilon_k } b_0.
\]
For each $k$, we solve the MHD system \eqref{EQ07a} on $[0,T_0]$, perturbed around $\us_k$, with initial data $(u_k,b_k)$ to obtain a solution $\psi_k \in C([0,T_0]; H^r (\Omega ))$, and we consider the limit $k\to \infty$. We note that, for each $k$ the choice of $\tau$ in the analytic spaces \eqref{EQ19c} is defined by \eqref{tau_choice_actual}, except with $\tau_0$ replaced by~$\varepsilon_0$. In particular, we see that $H^r$  estimate \eqref{002b} remains uniform with respect to $k $ and so we can find a subsequence of $\psi_k$ which converges weakly-$*$ in $L^\infty ((0,T_0);H^r)$. We also note that $\p_t \psi_k$ is bounded in $C([0,T_0];H^{r-1})$, uniformly with respect to $k$, which lets us use the Aubin-Lions lemma (see~\cite[Theorem~2.1 in Section~3.2]{T}, for example) to obtain a solution $\psi \in C ([0,T_0];H^r)$ of the MHD system \eqref{EQ07a_repeat} on $[0,T_0]$, perturbed around $\us$ with initial data~$\psi_0$. The uniqueness of such solution can be obtain by a simple energy argument in $C([0,T_0];L^2)$. 

\subsubsection{Construction of Sobolev solution on Sobolev domain}\label{sec_constr_sobsob}

Here we prove the local well-posedness Lemma~\ref{L01} in full generality. Namely, we consider $r\geq 3$, a bounded $H^{r+2}$ domain $\Omega $, a background vector field $\us \in C([0,T_0]; H^{r+1} (\Omega ) )$, $T_0$ given by \eqref{gprime}, and, for every $u_0,b_0\in \mathcal{H}_r$, we construct a unique solution $\psi \in C([0,T_0];H^r (\Omega ))$ to~\eqref{EQ07a_repeat}. 

\noindent\texttt{Step~1.} For a given $\varepsilon>0$, we find an analytic domain $Q$ that approximates~$\Omega$.\\

Namely, we fix $\phi \in H^{r+2}$ such that $\Omega =\{ \phi >0 \}$ (recall~\eqref{sobolev_domain}). Given $\varepsilon>0$, we set \[\phi_\varepsilon \coloneqq \Phi (\varepsilon )\ast \phi,
\]
where $\Phi $ denotes the 3D heat kernel, and we set 
\[
Q \coloneqq \{ \phi_\varepsilon >0 \}.
\]
Then, $\phi_\varepsilon $ is analytic by properties of the heat kernel (see \cite{KP2}, for example), and so $Q$ is an analytic domain; recall \eqref{def_an_domain}. Moreover, $Q$ can be described locally as a graph of an analytic function, which can be proven using an analytic version of the Implicit Function Theorem; see~\cite{KP1}, and $Q \to \Omega  $ in $H^{r+2}$ as $\varepsilon \to 0$, by following the analysis of \cite[Section~6.3]{A}. We emphasize that a given Sobolev domain is denoted by $\Omega$, while the approximate analytic domain, for a given $\varepsilon$, is denoted by $Q$. \\

\noindent\texttt{Step~2.} We construct a $H^{r+2}$ diffeomorphism $\eta \colon \Omega \to Q$.\\

To this end, we first let $\beta >0$ be a small number, to be determined later. Let $\xi \colon \R^3 \to [0,1]$ be a smooth function such that $\xi=1$ on $B(0,1-\beta )$ and $\xi =0$ on $B(0,1 )^c$.

Let $B_1, \ldots , B_L$ be a collection of balls that cover $\p \Omega $ and are such that, for each fixed $l\in \{ 1,\ldots , L\}$, the surface $\p \Omega\cap B_l$ is (up to rotation) a graph of a $H^{r+2}$ function $F_0$ that is bounded away from~$0$. Let $\varepsilon>0$ be sufficiently small so that the collection also covers $\p Q $ and, similarly, for each $l$, $\p \Omega \cap B_l$ is (after the same rotation) a graph of a $H^{r+2}$ function $F$ . We now fix $\beta>0$ sufficiently small so that the same is true for the collection $(1-\beta)B_1,\ldots , (1-\beta )B_L$.
Set $\xi_l (x) \coloneqq \xi (x-x_l)$, where $x_l$ is the center of~$B_l$. 
We define $\eta$ inductively: $\eta_0 \coloneqq \id$, and, for $l=\{ 1,\ldots , L \}$, we redefine $F_0$ and $F$ to be the graphs of $\p \eta_{l-1} (\Omega)\cap B_l$ and $\p \eta_{l-1} (\Omega)\cap B_l$, respectively, and we let 
\[ G_l \colon B_l \to  B_l \]
be such that $G_l$ is a diffeomorphism of $\p \eta_{l-1} (\Omega)\cap B_l$ onto $\p \eta_{l-1} (Q )\cap B_l$. Moreover, since both $\Omega$, $Q$ are $H^{r+2}$ domains, we can guarantee that $G_l \in H^{r+2}(B_l, B_l )$. Such $G_l$ can be constructed, for example, by considering the corresponding rotation and translation and then defining $G_l$ via the quotient~$F/F_0$. Since $F,F_0\in H^{r+2}$, and since $F_0$ is bounded away from $0$, the same is true of the quotient, and of the appropriate rotation and translation.

We now set
\[
\eta_{l} \coloneqq \left( \xi_l G_l + (1-\xi_l )\id \right) \circ \eta_{l-1}
\]
and $\eta \coloneqq \eta_{L}$. An inductive argument shows that $\eta \in H^{r+2}$ and that
\eqnb
\| \eta - \id \|_{H^{r+2}} \to 0\qquad \text{ as }\varepsilon \to 0.\vspace{0.6cm}
\eqne

\noindent\texttt{Step~3.} Given initial data $(v_0,b_0)$ on $\Omega$ and background flow $\us\in C([0,T_0];H^{r+1} (\Omega))$, we construct approximate data and background flow on~$Q$.\\

Namely, we set 
\[
v_0 \coloneqq u_0 \circ \eta^{-1}
\andand
g_0\coloneqq b_0 \circ \eta^{-1}
\]
and we use Lemma~\ref{L_CS} to construct solutions $U_0,B_0 \in H^r$ to the problems
\[
\begin{cases}
\div\, U_0 =0,&\\
\curl\, U_0 =\curl_a\, v_0 \qquad &\text{ in }\Omega,\\
U_0\cdot \nn =0\qquad &\text{ on }\p \Omega,
\end{cases} \qquad \begin{cases}
\div\, B_0 =0,&\\
\curl\, B_0 =\curl_a\, g_0 \qquad &\text{ in }\Omega ,\\
B_0\cdot \nn =0\qquad &\text{ on }\p \Omega ,
\end{cases} 
\]
respectively, satisfying 
\[
\| U_0 \|_{H^r} \lec_{\Omega } \|  v_0 \|_{H^r } \lec \|  u_0 \|_{H^r (\Omega )} \andand \| B_0 \|_{H^r} \lec_{\Omega } \|  g_0 \|_{H^r } \lec \|  b_0 \|_{H^r (\Omega )} ,
\]
where
\[
a\coloneqq (\na \eta )^{-1}
\]
and 
\[
\curl_a w \coloneqq \epsilon_{ijk} a_{lj}\p_l w_k ,
\]
using the summation convention. Similarly, we set 
\[
\Delta_a \coloneqq \p_j (a_{jl } a_{kl} \p_k (\cdot ))
.
\]
We now define the background flow $\us$ on the approximate domain~$Q$. To this end we recall Section~\ref{sec_uS} that $U$ is of the form
\eqnb\label{us_form}
U(x,t) = \sum_{i=1}^l h_i (t) \theta_i (x)
,
\eqne
where each $\theta_i\in H^{r+2} (\Omega )$ is a solution of the problem \eqref{theta_problem} on $\Omega$, i.e.,
\eqnb\label{theta_problem_restate_onOmega0}
\begin{cases}
\Delta \theta =0 \qquad &\text{ in }\Omega',\\
\p_{\nn} \theta =0 \qquad &\text{ on }\p \Omega' \setminus \Gamma,\\
\nabla \theta (x) =v,
\end{cases}
\eqne
 for some $x\in \p \Omega'$ and some $v\in T_{x }(\p \Omega')$. In order to obtain an approximation of $U$, which is defined on $Q \times [0,T_0]$, and is of the same form as \eqref{us_form},  we will need the following statement.
 
\begin{lemma}\label{L66}
Let $\overline{x}\in \p \Omega$. There exists $\varepsilon_0$ such that for each $\varepsilon\in (0,\varepsilon_0]$ the following holds: For every $v\in T_{\overline{x}}(\p \Omega_0)$ there exists a solution $\theta_a$ to the problem
\[
\begin{cases}
\Delta_a \theta_a =0\qquad &\text{ in }\Omega',\\
\p_{\nn} \theta_a =0 \qquad &\text{ on }\p \Omega'\setminus \Gamma ,\\
\nabla \theta_a (\overline{x})=v .
\end{cases}
\] 
Moreover, $\| \theta_a - \theta \|_{H^{r+2}}\to 0$ as $\varepsilon \to 0$.
\end{lemma}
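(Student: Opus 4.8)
The plan is to reduce the variable-coefficient problem for $\theta_a$ to a perturbation of the constant-coefficient Laplace problem \eqref{theta_problem_restate_onOmega0}, for which solvability and the prescription of the gradient at a boundary point are already known (recall \eqref{tangent_space} and \cite[Lemma~6.1--6.2]{G1}). The key observation is that $a = (\nabla \eta)^{-1} \to \mathrm{id}$ in $H^{r+1}$ as $\varepsilon \to 0$ (by Step~2 of Section~\ref{sec_constr_sobsob}), so $\Delta_a = \p_j(a_{jl}a_{kl}\p_k(\cdot))$ is a small $H^{r+1}$-perturbation of $\Delta$, and likewise the conormal derivative $\p_{\nn_a}$ associated with $\Delta_a$ converges to $\p_{\nn}$; here $\nn$ is the outward normal to $\p\Omega'$ and $\nn_a$ is the pulled-back conormal. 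Note that one should either work with the conormal boundary condition $a_{jl}a_{kl}\p_k\theta_a\, \nn_j = 0$ or, equivalently, observe that since $\eta$ is a diffeomorphism onto $Q$, $\theta_a = \theta_Q \circ \eta$ where $\theta_Q$ solves the genuine Laplace problem on the approximate domain $\eta(\Omega') \subset Q$; I will use whichever formulation is cleanest, but the perturbation argument below works directly on $\Omega'$.

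First I would set up the affine space of candidate solutions. Fix a smooth vector field $G$ on $\overline{\Omega'}$ with $G(\overline{x}) = v$ and, if $\overline{x} \in \p\Omega' \setminus \Gamma$, with $G$ tangent to $\p\Omega'\setminus\Gamma$ near $\overline{x}$; pick a cutoff so that $\nabla\cdot$ issues are local. Writing $\theta_a = \ell + \psi$ where $\ell$ is an affine function with $\nabla\ell = v$ (so $\nabla\theta_a(\overline{x}) = v$ reduces to $\nabla\psi(\overline{x}) = 0$), the problem becomes an inhomogeneous Neumann-type problem for $\psi$ with the constraint $\nabla\psi(\overline{x})=0$. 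The standard device, following Glass, is to enlarge the data: by \eqref{tangent_space} applied on $\Omega'$ (the constant-coefficient case), the map sending $\theta$ to $\nabla\theta(\overline{x})$ restricted to the space of harmonic functions with the homogeneous conormal condition is onto $T_{\overline x}(\p\Omega')$ (resp. $\R^3$ if $\overline x\in\Omega'$), so we may fix finitely many harmonic functions $\phi_1,\dots,\phi_k$ with $\{\nabla\phi_i(\overline x)\}$ spanning the relevant space. Then I would look for $\theta_a = \theta + \sum_i c_i(\varepsilon)\, \phi_i + w_a$, where $\theta$ is the fixed solution of \eqref{theta_problem_restate_onOmega0}, $w_a \in H^{r+2}(\Omega')$ solves the variable-coefficient boundary value problem with right-hand side $-(\Delta_a - \Delta)\theta$ and boundary datum $-(\p_{\nn_a} - \p_{\nn})\theta$ on $\p\Omega'\setminus\Gamma$ and on $\Gamma$ with appropriate values of $\nabla\theta_a\cdot\nn$ matching \eqref{theta_problem}, and the constants $c_i(\varepsilon)$ are chosen to enforce $\nabla\theta_a(\overline x) = v$, i.e. $\sum_i c_i \nabla\phi_i(\overline x) = -\nabla w_a(\overline x)$.

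The three steps to carry out, in order, are: (i) solvability and $H^{r+2}$ elliptic estimates for the operator $\Delta_a$ with mixed (conormal on $\p\Omega'\setminus\Gamma$, prescribed-flux on $\Gamma$) boundary conditions, uniform in $\varepsilon$ small — this follows from the classical theory of \cite{CS}-type div-curl/elliptic estimates (cf. Lemma~\ref{L_CS}) together with the fact that $a$ is a small perturbation of the identity in $H^{r+1}$, so the leading symbol stays uniformly elliptic and the lower-order terms are controlled by $\|a - \mathrm{id}\|_{H^{r+1}}$; (ii) the estimate $\|w_a\|_{H^{r+2}(\Omega')} \lesssim \|(\Delta_a-\Delta)\theta\|_{H^r} + \|(\p_{\nn_a}-\p_{\nn})\theta\|_{H^{r+1/2}(\p\Omega')} \to 0$ as $\varepsilon\to 0$, by that same elliptic estimate and $a\to\mathrm{id}$; (iii) $c_i(\varepsilon) \to 0$, which is immediate from $c_i(\varepsilon)$ being a fixed linear function of $\nabla w_a(\overline x)$ (well-defined because $\{\nabla\phi_i(\overline x)\}$ is a basis), together with the Sobolev embedding $H^{r+2}\hookrightarrow C^1$ ($r\ge 3$) giving $|\nabla w_a(\overline x)| \lesssim \|w_a\|_{H^{r+2}} \to 0$. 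Combining, $\|\theta_a - \theta\|_{H^{r+2}} \le \sum_i |c_i(\varepsilon)|\|\phi_i\|_{H^{r+2}} + \|w_a\|_{H^{r+2}} \to 0$, which is the claimed convergence.

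The main obstacle I anticipate is step (i): obtaining the elliptic estimate for the mixed boundary value problem for $\Delta_a$ with a constant implied by $H^{r+2}$-regularity and \emph{uniform in $\varepsilon$}. Two subtleties enter — first, the boundary $\p\Omega'$ contains the "artificial" interface $\Gamma = \p\Omega' \cap \Omega$ where a Neumann-type flux is prescribed rather than a homogeneous condition, so one must check the mixed problem is well-posed (it is, since $\Gamma$ and $\p\Omega'\setminus\Gamma$ meet only along $\p\Gamma \subset \p\Omega$, and the compatibility $\int_{\p\Omega'}\p_{\nn_a}\theta_a = 0$ is built into the construction); second, one must confirm that the perturbation $(\Delta_a - \Delta)$ and the boundary perturbation genuinely have small $H^r$, resp. $H^{r+1/2}$, norms when acting on the \emph{fixed} function $\theta \in H^{r+2}$ — this is where $r \ge 3$ is used, via the algebra property of $H^{r+1}(\Omega')$ and the trace theorem, and where the convergence rate $\|\eta - \mathrm{id}\|_{H^{r+2}}\to 0$ from Step~2 of Section~\ref{sec_constr_sobsob} is essential. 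Everything else is routine elliptic perturbation theory.
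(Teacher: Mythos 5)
Your route is genuinely different from the paper's, and it contains a gap in the corrector step that needs attention.

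The paper does not attempt a direct $H^{r+2}$ elliptic solve for the variable-coefficient operator $\Delta_a$. Instead it sets up a Picard iteration on the \emph{constant-coefficient} Neumann problem \eqref{psia3}: at each step $k$ one solves $\Delta\psi_k = (\Delta-\Delta_a)(\psi_{k-1}-\theta)$ with Neumann data $\nn\cdot((I-a^T)\nabla\psi_{k-1}) + c_k\chi$, where the scalar $c_k$ and the cutoff $\chi$ supported on $\Gamma$ are inserted precisely to make the Neumann compatibility condition hold at every iterate. Smallness of $\|I-a\|_{H^{r+2}}$ then makes the map a contraction in $H^{r+2}\times\R$, and the limit $\psi$ yields $\theta_a = \theta-\psi$. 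This avoids having to establish uniform mixed-boundary elliptic estimates for $\Delta_a$ (your step (i)), which is where you anticipated trouble. Your invocation of Lemma~\ref{L_CS} is not a priori sufficient, since \eqref{psia} is a Neumann problem for a variable-coefficient \emph{scalar} elliptic operator rather than a div-curl system; one would have to re-derive the estimate. The paper's device is more self-contained. Notably, the paper does \emph{not} enforce $\nabla\theta_a(\overline{x})=v$ exactly; the remark immediately following the lemma explains that an approximate match of the gradient vector and base point suffices for the intended use (constructing an approximating return flow on $Q$), so the gradient constraint is treated as soft.

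The genuine gap in your proposal is the additive corrector $\sum_i c_i(\varepsilon)\phi_i$. You take the $\phi_i$ to be $\Delta$-harmonic with the homogeneous conormal condition for $\Delta$, so with your ansatz $\theta_a=\theta+\sum_i c_i\phi_i+w_a$ and your choice $\Delta_a w_a = -(\Delta_a-\Delta)\theta$, one computes $\Delta_a\theta_a = \sum_i c_i\,\Delta_a\phi_i$, which is not zero: the $\phi_i$ are harmonic for $\Delta$, not for $\Delta_a$. Likewise the $\phi_i$ do not satisfy the variable-coefficient conormal condition on $\p\Omega'\setminus\Gamma$. You can repair this in one of two ways: include $-(\Delta_a-\Delta)(\sum_i c_i\phi_i)$ in the source for $w_a$ and observe that the resulting $k\times k$ linear system for $(c_i)$ is a small perturbation (in $\varepsilon$) of the invertible system $\sum_i c_i\nabla\phi_i(\overline{x}) = -\nabla w_a(\overline{x})$, so it remains uniquely solvable for $\varepsilon$ small; or replace the $\phi_i$ by their $\Delta_a$-harmonic counterparts (obtained by the very perturbation argument you are proving, applied to each $\phi_i$) and then the ansatz closes. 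Either fix also requires you to confirm the Neumann compatibility condition at the level of $w_a$, which is exactly what the paper's $c\chi$ term handles. As written, your proposal asserts the $c_i$ are determined by $\nabla w_a(\overline{x})$ and in turn claims $\Delta_a\theta_a = 0$, and these two statements are not simultaneously true without the additional argument.
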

\begin{proof} We first suppose that $\varepsilon$ is sufficiently small so that
\eqnb\label{psia9}
\| I-a \|_{H^{r+2}} \leq 1
.
\eqne
Given $w\in T_{\overline{x}}(\p \Omega)$, let $\theta$ be a solution of \eqref{theta_problem_restate_onOmega0} with $\nabla \theta (\overline{x}) = w$. We will find a solution $\psi\in H^r (\Omega)$ to the problem
\eqnb\label{psia}
\begin{cases}
\Delta_a \psi =-(\Delta-\Delta_a) \theta \qquad &\text{ in }\Omega,\\
{\nn}\cdot (a^T  \nabla \psi) =0 \qquad &\text{ on }\p \Omega \setminus \Gamma,
\end{cases}
\eqne
such that
\eqnb\label{psia_conv}
\| \psi \|_{H^{r+2}} \lec \| I-a \|_{H^{r+2}} \| \theta \|_{H^{r+2}}.
\eqne
Then taking $\theta_a \coloneqq \theta - \psi$ gives the claimed solution.

In order to find $\psi$, we first let $\chi \in C_0^\infty (\Gamma)$ be such that $\int_\Gamma \chi =1$, and we consider the Neumann problem
\eqnb\label{psia1}
\begin{cases}
\Delta \psi = (\Delta - \Delta_a ) (\psi - \theta ) \qquad &\text{ in }\Omega,\\
\p_{\nn} \psi = {\nn}\cdot ((I-a^T)  \nabla \psi)+ c \chi  \qquad &\text{ on }\p \Omega,
\end{cases}
\eqne
where $c\in \R$.
Note that \eqref{psia1} is uniquely solvable if and only if 
\eqnb\label{psia2}
c = -\int_{\p \Omega } \nn\cdot ((I-a^T)  \nabla \psi) + \int_{\Omega} (\Delta - \Delta_a ) (\psi - \theta ).
\eqne
We are thus looking for a solution $(\psi,c)\in H^{r+2} \times \R$ to \eqref{psia1}--\eqref{psia2}. We set $(\psi_0,c_0)\coloneqq (0,0)$, and, for $k\geq 1$ we let 
\[
c_k\coloneqq -\int_{\p \Omega } \nn \cdot ((I-a^T)  \nabla \psi_{k-1}) + \int_{\Omega} (\Delta - \Delta_a ) (\psi_{k-1} - \theta ),
\]
and we let $\psi_k \in H^{r+2} $ be the solution to
\eqnb\label{psia3}
\begin{cases}
\Delta \psi_k = (\Delta - \Delta_a ) (\psi_{k-1} - \theta ) \qquad &\text{ in }\Omega,\\
\p_{\nn} \psi_k = {\nn}\cdot ((I-a^T)  \nabla \psi_{k-1})+ c_k \chi  \qquad &\text{ on }\p \Omega.
\end{cases}
\eqne
We have that 
\eqnb\label{psia4}
|c_k|+\| \psi_k \|_{H^{r+2}} \lec \| I-a \|_{H^{r+2}} \left( \| \psi_{k-1} \|_{H^{r+2}}+\| \theta \|_{H^{r+2}}\right) ,
\eqne
due to~\eqref{psia9}. Since $\psi_{k+1} - \psi_k$ is a solution to
\eqnb\label{psia5}
\begin{cases}
\Delta (\psi_{k+1} - \psi_k ) = (\Delta - \Delta_a ) (\psi_{k} - \psi_{k-1}  ) \qquad &\text{ in }\Omega,\\
\p_{\nn} (\psi_{k+1} - \psi_k ) = {\nn}\cdot ((I-a^T)  \nabla (\psi_{k} - \psi_{k-1} ) ) + (c_k-c_{k-1})  \chi  \qquad &\text{ on }\p \Omega,
\end{cases}
\eqne
we have that
\[
|c_{k+1} - c_k | + \| \psi_{k+1} - \psi_k  \|_{H^{r+2}} \lec \| I-a \|_{H^{r+2}} \left( |c_{k} - c_{k-1} | + \| \psi_{k} - \psi_{k-1}  \|_{H^{r+2}} \right) 
,
\]
for all $k\geq 1$. Thus, if $\varepsilon_0>0$ is sufficiently small, then $(\psi_k,c_k) $ is Cauchy in $H^{r+2} \times \R$, and so $\| \psi_k - \psi \|_{H^{r+2}} , |c_k -c | \to 0$ as $k\to \infty$, for some $\psi \in H^{r+2}, c\in \R$. We can thus take $k\to \infty $ in \eqref{psia3} to see that $(\psi,c)$ solve \eqref{psia1}--\eqref{psia2}, which implies \eqref{psia}, as $\chi=0 $ on $\p \Omega_0 \setminus \Gamma$. Letting $k\to \infty $ in \eqref{psia4}, we  obtain \eqref{psia_conv}, as required.
\end{proof}
The point of the lemma is, of course, that, if $\theta_a$ is given by Lemma~\ref{L66} then $\theta_a \circ \eta^{-1}$ satisfies \eqref{theta_problem_restate_onOmega0} on $Q$, with a slightly different $x_i$ and  $v_i$, if $\varepsilon>0$ is sufficiently small.

To be precise, we consider the finite sequence $\{ \eta (x_m )\}_{m=1}^M$ (recall~\eqref{def_partition}), and we let $\theta^{(a)}_i$ be given by Lemma~\ref{L66} with $x_i\in \p \Omega'$, $v_i \in T_{x_i}(\p \Omega)$. Then we set
\[
V(x,t) \coloneqq \sum_{i=1}^l h_i (t) \theta^{(a)}_i\circ \eta^{-1} (x),
\]
where $h_i$ are the same as in~\eqref{us_form}. We note that this way $V(x,t)$ solves the 3D incompressible forced Euler equations with forcing supported in $\eta (\Omega \setminus \overline{\Omega'})$, with non-penetration boundary condition on $\p \Omega'$. 
Moreover the path $\eta (F([0,1]))$ might not be the same as $\phi^V (\eta (a))$. This is not a problem, as the path is not relevant for the local well-posedness result (Lemma~\ref{L01}).
However, we have
\eqnb\label{V_goesto_U}
\| U- V \|_{C([0,T_0];H^{r+2})}\to 0\qquad \text{ as }\varepsilon \to 0,
\eqne
due to the last claim of Lemma~\ref{L66}.\\

\noindent\texttt{Step~4.} We take the limit $\varepsilon \to 0$ to conclude the proof of the local well-posedness Lemma~\ref{L01}.\\

Namely, we consider the background flow $V$ defined in the previous step to apply the construction of Section~\ref{sec_constr_soban} to obtain a $(v,g)\in C([0,T_0];H^r)$ to the  MHD system \eqref{EQ07a_repeat}, perturbed around $V$ on $Q$ with initial data $(U_0,B_0)$. Since taking the limit $\varepsilon\to 0$ we have $U_0 \to u_0$, $B_0 \to b_0$ as $\varepsilon \to 0$ and recalling \eqref{V_goesto_U}, we can use the uniform $C([0,T_0];H^{r})$ estimate \eqref{002b} as well as an Aubin-Lions argument (as in Section~\ref{sec_constr_soban} above) to obtain a limit $\psi \in C([0,T_0];H^r (\Omega ))$ satisfying \eqref{EQ07a_repeat} around background velocity field $\us$ and with initial condition $(u_0,b_0)$. Uniqueness follows by a similar energy estimate in $C([0,T_0];L^2 )$ as in Section~\ref{sec_constr_soban}.  

\subsection{The cancelling operator}\label{sec_cancel}

Here we construct the cancelling operator $T$ of Step~6 (recall~\eqref{opT_prop}). Namely, we suppose that $\Omega $ is a $H^{r+2}$ domain, that $\widetilde{W}, W\subset \Omega$ are  $H^{r+2}$ subdomains such that $\widetilde{W}\subset \subset W$, and that $\chi \in C^\infty (\Omega ; [0,1])$. Then we set 
\eqnb\label{what_is_V}
V \coloneqq \{ w\in \mathcal{H}_r \colon \text{ either } \chi (x) =1 \text{ or } w(x)=0 \quad \text{ for every } x\in W\setminus \widetilde{W}  \}
\eqne
(recall \eqref{def_Hr}, where $\mathcal{H}_r \coloneqq \{ u\in H^r (\Omega ) \colon \div u =0\text{ in }\Omega , \, u\cdot n =0 \text{ on }\p \Omega  \}$), 
and we prove the following.
\begin{lemma}
There exists $T\in B(V,\mathcal{H}_r)$ such that $Tb =b$ on $\Omega \setminus W$.
\end{lemma}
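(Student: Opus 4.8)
The plan is to build $Tb$ by combining the cutoff $\chi b$ with a Bogovski\u{\i}-type correction supported in $W$, and then restoring the no-penetration boundary condition via one application of Lemma~\ref{L_CS} (the div-curl system). First I would observe that for $b\in V$ the field $b-\chi b = (1-\chi)b$ vanishes on $\widetilde W$ and on $W\setminus\widetilde W$ wherever $\chi\neq 1$, hence $(1-\chi)b$ is supported (within $W$) in the region where $\chi=1$, i.e.\ in $\widetilde W$; actually the cleaner bookkeeping is to set $b_1\coloneqq \chi b$ and note $\div b_1 = \nabla\chi\cdot b$, which is supported in $W\setminus\widetilde W$ and has zero mean over $W$ since $\int_W \div(\chi b) = \int_{\p W}\chi\, b\cdot\nn$ and on $\p W$ (which lies in $\Omega\setminus\widetilde W$, away from $\Omega$'s boundary only if $\p W$ is interior—so more carefully $\int_W\div(\chi b)=\int_{\p W\cap\p\Omega}\chi b\cdot\nn + \int_{\p W\cap\Omega}\chi b\cdot\nn$, and using $b\cdot\nn=0$ on $\p\Omega$ together with $\chi b$ vanishing on the interior part of $\p W$ by the defining property of $V$, this is $0$). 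Then the Bogovski\u{\i} lemma on $W$ yields $w\in H^r_0(W)$ with $\div w = \div(\chi b)$ in $W$, $\supp w\subset W$, and $\|w\|_{H^r(W)}\lesssim_{W}\|\div(\chi b)\|_{H^{r-1}(W)}\lesssim_{\Omega,W,\widetilde W,\chi}\|b\|_{H^r(\Omega)}$.

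Next I would set $\tilde b\coloneqq \chi b - w$, extended by $b$ outside $W$; this is well-defined and in $H^r(\Omega)$ because on the overlap region $W\setminus\widetilde W$ where $\chi\neq 1$ we have $b=0$ so $\chi b = 0$, and $w$ is compactly supported in $W$, so near $\p W$ the field $\chi b - w$ agrees with $b$ only if $\chi=1$ there—this is precisely why the definition of $V$ is stated as "$\chi=1$ \emph{or} $b=0$" on $W\setminus\widetilde W$. In the first case $\chi b = b$ and $w$ vanishes near $\p W$, so $\chi b - w = b$; in the second case $b=0$ and $w$ vanishes near $\p W$, so $\chi b - w = 0 = b$. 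Hence $\tilde b = b$ on $\Omega\setminus W$ (in fact on $\Omega\setminus\widetilde W$ up to the $w$-correction), $\div\tilde b = 0$ in $\Omega$, and $\|\tilde b\|_{H^r(\Omega)}\lesssim\|b\|_{H^r(\Omega)}$. The remaining defect is that $\tilde b\cdot\nn$ need not vanish on $\p\Omega$—but since $\tilde b = b$ near $\p\Omega$ (as $W$ is an interior subdomain, so $\p\Omega\cap\overline W=\emptyset$, or at worst $\chi b-w=b$ near $\p\Omega$), and $b\in\mathcal H_r$, we actually get $\tilde b\cdot\nn = 0$ for free provided $\overline W\subset\Omega$. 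If one wants to allow $\overline W$ to touch $\p\Omega$, one instead applies Lemma~\ref{L_CS} with $f\coloneqq\curl\tilde b$, $g\coloneqq 0$, $h\coloneqq 0$ on the subdomain where the correction lives to absorb the boundary trace, but I expect the paper intends $W\subset\subset\Omega$ so this is unnecessary.

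Finally I would define $Tb\coloneqq\tilde b$ and verify the four properties in \eqref{opT_prop}: divergence-free and the $H^r$ bound are established above, $Tb=b$ on $\Omega\setminus W$ is the gluing identity, and $Tb\cdot\nn=0$ on $\p\Omega$ follows since $Tb=b$ in a neighbourhood of $\p\Omega$. Linearity and boundedness of $T\colon V\to\mathcal H_r$ follow from linearity of $b\mapsto\chi b$ and linearity of the Bogovski\u{\i} operator. \textbf{The main obstacle} I anticipate is not any single estimate but the careful case analysis at $\p W$ showing $\tilde b\in H^r(\Omega)$ is genuinely globally $H^r$ across $\p W$: this is exactly where the disjunctive hypothesis defining $V$ is used, and one must check that in each of the two cases the two formulas for $\tilde b$ (namely $\chi b - w$ inside $W$ and $b$ outside) patch together in $C^\infty$—here the compact support of the Bogovski\u{\i} correction $w$ in $W$ is essential, and one should choose the Bogovski\u{\i} operator so that $w\in H^r_0(W)$ (zero Cauchy data on $\p W$), which is standard (see \cite[Section~III.3]{Ga}).
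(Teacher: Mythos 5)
Your proposal goes via Bogovski\u{\i} on $W$, which is a genuinely different route from the paper's argument. The paper does not invoke Bogovski\u{\i} at all: it solves a div--curl system (Lemma~\ref{L_CS}) on the \emph{smaller} set $\widetilde W$ with the Neumann condition $\psi\cdot\nn=0$ on $\p\widetilde W$, observes that the resulting zero-extension is only $L^2$ across $\p\widetilde W\cap\Omega$, and then runs an $r$-step bootstrap: at step $k$ it solves $\curl\,\phi_k=\psi_{k-1}$, $\div\,\phi_k=0$, $\phi_k\times\nn=0$ in all of $\Omega$, and sets $\psi_k\coloneqq\curl(\chi_k\phi_k)$ with a nested family of cutoffs $\chi_0,\dots,\chi_r$. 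This simultaneously raises the regularity one notch per step and restores $\psi_k\cdot\nn=0$ on $\p\Omega$. Your one-shot construction is shorter if it works, and trades the iteration for a higher-order compactly-supported Bogovski\u{\i} estimate.

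However, there is a concrete error that is load-bearing. You assert that $\div(\chi b)=\nabla\chi\cdot b$ is ``supported in $W\setminus\widetilde W$.'' This is backwards. On $W\setminus\widetilde W$ the defining property of $V$ says that at each point either $\chi=1$ (hence $\nabla\chi=0$, since $\chi$ attains its maximum value $1$ there) or $b=0$; either way $\nabla\chi\cdot b\equiv 0$ on $W\setminus\widetilde W$. Thus $\nabla\chi\cdot b$ is supported, within $W$, in $\overline{\widetilde W}$ — exactly the opposite of what you wrote, and exactly what the paper uses by posing the div--curl problem on $\widetilde W$. This matters for your argument: the compact support of the Bogovski\u{\i} correction $w$ in $W$ (which you need so that $\chi b-w$ and $b$ glue in $H^r$ across $\p W\cap\Omega$) can only come from $\nabla\chi\cdot b$ being compactly supported in $W$, i.e.\ from the $\widetilde W$-support, not from the $W\setminus\widetilde W$-support. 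As written, your support claim would actually put the data right up against $\p W$, and Bogovski\u{\i} would then \emph{not} return a compactly supported $w$, so the gluing would fail.

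A second, smaller gap: you first conclude only ``$w\in H^r_0(W)$'' from Bogovski\u{\i} but then, two sentences later, use ``$w$ is compactly supported in $W$'' as if it were the same thing. These are not equivalent; the compact support needs to be extracted explicitly from the compact support of the data (once corrected) together with the support-preservation property of the Bogovski\u{\i} operator, and at the level of $H^{r}$ one must also justify the higher-order estimate on the (not necessarily star-shaped) $W$. Finally, your handling of the case $\overline W\cap\p\Omega\neq\emptyset$ is left as ``I expect the paper intends $W\subset\subset\Omega$''; the paper's iteration with $\phi_k\times\nn=0$ on $\p\Omega$ is precisely what makes $\psi_k\cdot\nn=0$ on $\p\Omega$ without assuming $\overline W$ stays away from $\p\Omega$, and is therefore the more robust construction.
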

\begin{proof}
Let $\widetilde{W}\subset \subset W$ be a $H^{r+2}$ subdomain such that \eqref{what_is_V} is valid with $W$ replaced by~$\overline{W}$. Let $\chi_0 ,\chi_1, \ldots , \chi_r  \in C^\infty (\Omega ; [0,1])$ be sequence of cutoff functions such that
\[
 \Omega\setminus W \subset \subset \{ \chi_r =1 \} \subset \subset \supp\, \chi_r   \subset \subset \ldots \{ \chi_0 =1 \} \subset \subset \supp\, \chi_0   \subset \subset \Omega \setminus \widetilde{W}.
\]

We note that since
\[
\int_{\widetilde{W}}  \nabla \chi \cdot b = \int_{\widetilde{W}} \div \, (b \chi ) = \int_{\p {\widetilde{W}}} b\chi \cdot n =0,
\]
by assumption and by $b\cdot n=0$ on $\p \Omega$, Lemma~\ref{L_CS} shows that there exists  $\psi$ such that 
\eqnb\label{make_div0}
\begin{cases}
\curl\, \psi  =0 \qquad &\text{ in }{\widetilde{W}} ,\\
\div \, \psi =\nabla \chi \cdot b   &\text{ in } {\widetilde{W}} ,\\
\psi \cdot \nn =0 &\text{ on }\p {\widetilde{W}},
\end{cases}
\eqne
and that $\| \psi \|_{H^{r}({\widetilde{W}})} \lec \| b \|_{H^r({\widetilde{W}})}$. Note that 
\[
\psi_0\coloneqq  b- \psi \in L^2 (\Omega )
\]
is divergence-free, $\psi_0 = b $ on $\Omega \setminus \widetilde{W} \supset \supp \, \chi_0$, and 
\[
\| \psi_0 \|_{L^2 (\Omega )} \lec \| b \|_{H^r (\Omega )},
\]
but it is not clear that $\psi_{0} \in \mathcal{H}_r$ (i.e., across $\p \widetilde{W}\cap \Omega $). To this end, we describe an iteration procedure which bootstraps the regularity up to $H^r (\Omega )$. Namely, for each $k=1,\ldots, r$, given $\psi_{k-1} \in \mathcal{H}_{k-1}$ such that
\[
\psi_{k-1}  =  b\qquad \text{ on } \{  \chi_{k-1} =1 \} ,
\]
 we use Lemma~\ref{L_CS} (with $l\coloneqq k$) to obtain a unique solution $\phi_k\in H^k$ be the solution to the problem
\eqnb\label{increase_Hk}
\begin{cases}
\curl\, \phi_k  =\psi_{k-1} \qquad &\text{ in }{\Omega } ,\\
\div \, \phi_k =0    &\text{ in } {\Omega} ,\\
\phi_k \times \nn =0 &\text{ on }\p {\Omega},
\end{cases}
\eqne
with $\| \phi_k \|_{H^{k}} \lec \| \psi_{k-1} \|_{H^{k-1}}$. 
We set 
\[
\psi_k \coloneqq \curl\, (\phi_k \chi_k) = \chi_k  \underbrace{\curl\, \phi_k }_{=\psi_{k-1}=b} + \phi_k \times \nabla \chi_k.
\]
Clearly, $\psi_k = b$ on $\{ \chi_k=1\} $ and 
\[
\| \psi_k \|_{H^k (\Omega )} \lec_k \| b \|_{H^r (\Omega )} + \| \phi_k \|_{H^k (\Omega )}. 
\]
Moreover $\div \,\psi_k=0$ and, on $\p \Omega $,
\[
 \psi_{k} \cdot \nn = \curl\, (\phi_k \chi_k )\cdot \nn = \chi_k \underbrace{(\curl \phi_k )}_{=b}\cdot \nn + (\phi_k \times \nabla \chi_k ) \cdot \nn=\chi_k b\cdot \nn  - (\phi_k \times\nn )\cdot \nabla \chi_k =0.
  \]
  Thus $\psi_k \in \mathcal{H}_k$ for each $k=1,\ldots , r$. The lemma follows by setting $Tb \coloneqq\psi_r$.
\end{proof}
The above proof implies also that given $b_0\in \mathcal{H}_r$ we can cut it off in $\Omega\setminus \overline{\Omega'}$ arbitrarily close to $\Omega'$.
\begin{corollary}\label{cor_wlog}
Given $\delta >0$ there exists $T\in B(\mathcal{H}_r, \mathcal{H}_r)$ such that $Tb=0$ on $\Omega \setminus  \overline{\Omega'+B(\delta )}$
\end{corollary}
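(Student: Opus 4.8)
The plan is to run the construction from the proof of the cancelling-operator lemma of Section~\ref{sec_cancel}, but applied to a cut-off of $b$ that already has the required support rather than to $b$ itself. Fix $\chi\in C^\infty(\Omega;[0,1])$ with $\chi\equiv 1$ on $\overline{\Omega'+B(\delta/3)}$ and $\supp\chi\subset\overline{\Omega'+B(2\delta/3)}$, so that $\supp\nabla\chi$ lies in the shell $S\coloneqq\{x:\delta/3\le\dist(x,\Omega')\le 2\delta/3\}$, which is at positive distance from $\overline{\Omega'}$ and is contained in $\Omega'+B(\delta)$. For $b\in\mathcal{H}_r$ the field $\chi b$ equals $b$ on $\Omega'$, vanishes on $\Omega\setminus\overline{\Omega'+B(\delta)}$, and satisfies $\div(\chi b)=\nabla\chi\cdot b$, supported in $S$; since $b\cdot\nn=0$ on $\p\Omega$ we get $\int_\Omega\div(\chi b)=\int_{\p\Omega}\chi\,b\cdot\nn=0$. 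The only obstruction to simply setting $Tb\coloneqq\chi b$ is that $\chi b$ is not divergence-free.

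First I would kill this divergence on a thin collar. Pick a bounded $H^{r+2}$ domain $\widetilde W$ with $S\subset\subset\widetilde W$ and $\overline{\widetilde W}\subset(\Omega'+B(\delta))\setminus\overline{\Omega'}$ (a thickened shell; $\widetilde W$ may abut $\p\Omega$). Since $\supp\nabla\chi\subset\widetilde W$ one still has $\int_{\widetilde W}\div(\chi b)=0$, so Lemma~\ref{L_CS} (first part, with $l=r$, $f=0$, $g=\chi b|_{\widetilde W}$ so that the target divergence is $\nabla\chi\cdot b$, and $h=0$) produces $\psi\in H^r(\widetilde W)$ with $\curl\psi=0$, $\div\psi=\nabla\chi\cdot b$ in $\widetilde W$, $\psi\cdot\nn=0$ on $\p\widetilde W$, and $\|\psi\|_{H^r(\widetilde W)}\lesssim\|\chi b\|_{H^{r-1}(\widetilde W)}\lesssim\|b\|_{H^r(\Omega)}$. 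Extending $\psi$ by zero and setting $\widehat b\coloneqq\chi b-\psi$, the normal-trace condition on $\p\widetilde W$ forces $\div\widehat b=0$ in $\Omega$; moreover $\widehat b\cdot\nn=0$ on $\p\Omega$ (from $b\cdot\nn=0$ and $\psi\cdot\nn=0$ on the part of $\p\widetilde W$ lying in $\p\Omega$), $\widehat b=\chi b$ on $\Omega\setminus\widetilde W$ --- hence $\widehat b=b$ on $\Omega'$ and $\widehat b=0$ on $\Omega\setminus\overline{\Omega'+B(\delta)}$ --- and $\|\widehat b\|_{L^2(\Omega)}\lesssim\|b\|_{H^r(\Omega)}$; the only defect of $\widehat b$ is a loss of regularity across $\p\widetilde W\cap\Omega$.

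It then remains to upgrade the regularity of $\widehat b$ to $H^r$, which is done by the same iteration as in the proof of the lemma of Section~\ref{sec_cancel}: choose nested cutoffs $\chi_0,\dots,\chi_r$ with $\Omega\setminus\widetilde W'\subset\subset\{\chi_r=1\}\subset\subset\supp\chi_r\subset\subset\cdots\subset\subset\{\chi_0=1\}\subset\subset\supp\chi_0\subset\subset\Omega\setminus\overline{\widetilde W}$ for a slightly enlarged collar $\widetilde W'$ with $\widetilde W\subset\subset\widetilde W'$, put $\widehat b_0\coloneqq\widehat b$, and for $k=1,\dots,r$ solve (Lemma~\ref{L_CS}, second part, which applies since $\div\widehat b_{k-1}=0$ and $\widehat b_{k-1}\cdot\nn=0$ on $\p\Omega$) $\curl\phi_k=\widehat b_{k-1}$, $\div\phi_k=0$ in $\Omega$, $\phi_k\times\nn=0$ on $\p\Omega$, and set $\widehat b_k\coloneqq\curl(\phi_k\chi_k)=\chi_k\widehat b_{k-1}+\phi_k\times\nabla\chi_k$. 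Exactly as in Section~\ref{sec_cancel}, each $\widehat b_k\in\mathcal{H}_k$, $\widehat b_k=\widehat b_{k-1}$ on $\{\chi_k=1\}$, and $\|\widehat b_k\|_{H^k(\Omega)}\lesssim\|b\|_{H^r(\Omega)}$, so $Tb\coloneqq\widehat b_r$ defines a bounded linear operator $T\colon\mathcal{H}_r\to\mathcal{H}_r$; since $\{\chi_r=1\}$ contains both $\Omega'$ and $\Omega\setminus\overline{\Omega'+B(\delta)}$ we conclude $Tb=\widehat b=b$ on $\Omega'$ (which is what is actually used in the reduction) and $Tb=\widehat b=0$ on $\Omega\setminus\overline{\Omega'+B(\delta)}$, as required.

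The one point needing care beyond the lemma already proven is the choice of the collar $\widetilde W$: it must simultaneously contain $\supp\nabla\chi$, stay at positive distance from $\Omega'$ and inside $\Omega'+B(\delta)$ so that the correction $\psi$ never disturbs $\Omega'$ or the exterior region, and be a legitimate $H^{r+2}$ domain to which Lemma~\ref{L_CS} applies even where it meets $\p\Omega$; granted this, the compatibility condition reduces as above to $b\cdot\nn=0$ on $\p\Omega$, and the divergence-correction together with the bootstrap are routine.
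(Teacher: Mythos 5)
Your proof is correct and follows the paper's approach: pick a cutoff $\chi$ with $\chi\equiv 1$ near $\overline{\Omega'}$ and $\chi\equiv 0$ outside $\Omega'+B(\delta)$, make $\chi b$ divergence-free on a collar via Lemma~\ref{L_CS}, and run the same nested-cutoff bootstrap as in Section~\ref{sec_cancel} to recover $H^r$ regularity across the collar. You are more explicit than the paper's terse ``follow the same procedure'' --- in particular specifying the collar $\widetilde W$, verifying the compatibility $\int_{\widetilde W}\nabla\chi\cdot b=0$, and noting that the div-correction must be subtracted from $\chi b$ --- but the decomposition and the lemmas invoked are the same.
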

\begin{proof}
We let $\chi \in C^\infty$ be such that $\chi=1 $ on $ \Omega'+B(\delta /2)$ and $\chi =0$ on $\Omega \setminus  \overline{\Omega'+B(\delta )}$. We then follow the same procedure as in the proof above, by first making $b\chi$ diveregence-free as in \eqref{make_div0}, and then iterating regularity by considering a sequence of cutoff functions and increasing the regularity up to $H^r$ as in \eqref{increase_Hk}.
\end{proof}

\section{2D controllability}\label{sec_2d}

Here we prove Theorem~\ref{T01} in the case $n=2$. As in Steps 1 and 2 in Section~\ref{sec_3d}, we note that we can assume that $b_1=0$, $u_1$ is arbitrary, and that $T_0$ is arbitrarily large.

We now note that since we can assume that $u_0$ and $b_0$ have compact support in $\Omega $ (recall Corollary~\ref{cor_wlog} above), we can replace the ambient domain $R$ by a $H^{r+2}$ domain that is periodic in $x_1$ with some period~$L$. Denote by 
\[
\Omega \coloneqq \{ (x,y) \in \R^2 \colon (x,y) = (x_0+kL,y_0)\in R , \,k\in \ZZ \}
\]
the periodic extension of~$R$.

We show in Section~\ref{sec_conform} below that $\Omega$ can be mapped via a conformal and periodic map onto a unit periodic channel $S \coloneqq \{ z\in \C \colon |\im \,z| <1 \}$. Namely, we say that $F\colon R\to S $ is periodic if there exists $M>0$ such that
\[
F(z+L)=F(z)+M
,
\]
for every $z=x+iy \in \C$ such that  $(x,y)\in R$, and we show the following.
\begin{proposition}\label{prop_conformal}
There exists a periodic conformal mapping $F\colon R\to S$.
\end{proposition}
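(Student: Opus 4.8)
The plan is to obtain $F$ by applying the Riemann Mapping Theorem to get \emph{some} conformal bijection $\Omega\to S$ and then to correct it, by post-composition with an automorphism of $S$, so that it intertwines the two horizontal translations. Throughout, write $\sigma(z)\coloneqq z+L$ for the generator of the period group of $\Omega$ and $\tau_M(z)\coloneqq z+M$ for the translation of $S$ by an amount $M$; then $\sigma$ acts on $\Omega$ freely and properly discontinuously, and the periodicity demanded of $F$ is precisely that $F\circ\sigma=\tau_M\circ F$ for some $M>0$. First I would note that $\Omega$, being a periodic channel, is simply connected and a proper subdomain of $\mathbb C$, so Theorem~\ref{thm_riemann} furnishes a conformal bijection $G\colon\Omega\to S$; I would then set $T\coloneqq G\circ\sigma\circ G^{-1}\in\mathrm{Aut}(S)$, which generates a free, properly discontinuous cyclic action on $S$.

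The core of the argument is then to show that $T$ is conjugate in $\mathrm{Aut}(S)$ to a translation, and for this I would pass to the quotient surfaces. The map $G$ descends to a conformal isomorphism $\mathcal A\coloneqq\Omega/\langle\sigma\rangle\to S/\langle T\rangle$. Now $\mathcal A$ is an open Riemann surface homeomorphic to an annulus, whose two ends are the images of the two distinct boundary curves of $\Omega$ — genuine $C^1$ curves, since $\partial\Omega$ is $H^{r+2}$ with $r\ge2$ — so by the classical uniformization of doubly connected surfaces $\mathcal A$ is conformally equivalent to a round annulus $\mathcal A_\rho\coloneqq\{w\colon 1<|w|<\rho\}$ of \emph{finite} modulus $\rho\in(1,\infty)$; in particular $\mathcal A$ is not a punctured disk. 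On the other hand, for each $M>0$ the map $w=\exp(2\pi i z/M)$ descends to a conformal isomorphism $S/\langle\tau_M\rangle\to\{e^{-2\pi/M}<|w|<e^{2\pi/M}\}\cong\mathcal A_{\exp(4\pi/M)}$, and $M\mapsto\exp(4\pi/M)$ is a strictly decreasing bijection of $(0,\infty)$ onto $(1,\infty)$. I would therefore choose $M>0$ with $\exp(4\pi/M)=\rho$, so that $S/\langle T\rangle\cong\mathcal A\cong\mathcal A_\rho\cong S/\langle\tau_M\rangle$, and lift this conformal isomorphism of quotients through the universal coverings $S\to S/\langle T\rangle$ and $S\to S/\langle\tau_M\rangle$ to a conformal bijection $A\colon S\to S$; since lifting carries deck groups onto deck groups, $A\circ T\circ A^{-1}$ generates $\langle\tau_M\rangle$, hence equals $\tau_M^{\pm1}$, and replacing $A$ by $z\mapsto-A(z)$ in the minus case we may arrange $A\circ T\circ A^{-1}=\tau_M$ with $M>0$.

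Finally I would set $F\coloneqq A\circ G\colon\Omega\to S$: this is a conformal bijection, and $F\circ\sigma=A\circ T\circ G=\tau_M\circ A\circ G=\tau_M\circ F$, i.e.\ $F(z+L)=F(z)+M$ whenever $(\mathrm{Re}\,z,\mathrm{Im}\,z)\in\Omega$, so the restriction of $F$ to $R$ is the periodic conformal map asserted in Proposition~\ref{prop_conformal}. I expect the genuine difficulty to lie in the middle step: a bare application of Theorem~\ref{thm_riemann} carries no information about periodicity, and one must exclude the ``parabolic'' alternative for $T$ — equivalently, prove that the quotient annulus $\Omega/\langle\sigma\rangle$ has finite modulus rather than being conformally a punctured disk — which is exactly the point at which the actual geometry of the channel $\Omega$ (its two nondegenerate boundary curves, not merely its topology) must be used. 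Once that is established, the identification of the quotient's conformal type and the standard lifting of covering maps complete the proof in a routine manner.
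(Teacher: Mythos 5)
Your proof is correct and takes a genuinely different route from the paper's. The paper argues concretely and at the level of the Riemann Mapping Theorem alone: it fixes the Riemann mapping $f\colon R\to D$ at the basepoint $0$, tracks the angle mismatch $h(\delta)$ between the images of $i\delta$ and $L+i\delta$ (using the tangent--chord theorem and a carefully chosen circle $\Sigma$ through $\mathrm{inv}(f(i\delta))$), shows $h(-D)=-\pi$ and $h(D)=\pi$, and extracts a zero by the Darboux property; the linear fractional transformation $G_{f(z_0)}$ then straightens things out so that the periodicity follows from the \emph{uniqueness} clause of the RMT applied to $z\mapsto F(L+z)-F(L+z_0)$. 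Your argument instead works one level up: conjugate the deck transformation $\sigma$ by an arbitrary Riemann map $G\colon\Omega\to S$ to get $T\in\mathrm{Aut}(S)$, pass to the quotient annulus $\Omega/\langle\sigma\rangle$, observe that it is of finite modulus (because both of its ends are bona fide Jordan curves, not punctures — equivalently, the quotient is a relatively compact doubly connected subdomain of the cylinder $\C/L\ZZ$ with both complementary components nondegenerate), match that modulus to $S/\langle\tau_M\rangle\cong\mathcal A_{\exp(4\pi/M)}$ to pin down $M$, and lift the isomorphism of quotients to $A\in\mathrm{Aut}(S)$ conjugating $T$ to $\tau_M$ (with the sign of $M$ fixable by post-composing with $z\mapsto -z$). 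Both are correct; the paper's buys elementarity — nothing beyond RMT, classical Euclidean plane geometry, and the intermediate value theorem — while yours buys brevity and conceptual clarity at the cost of invoking the uniformization of doubly connected surfaces and covering-space lifting. One small remark: the $C^1$ regularity of $\partial\Omega$ that you cite is not actually what makes the modulus finite — continuity of the boundary curves (two disjoint nondegenerate continua) already suffices; the Sobolev regularity is needed elsewhere (e.g.\ in the paper's invocation of Kellogg--Warschawski for boundary behaviour of $f'$, which your route sidesteps).
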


Given Proposition~\ref{prop_conformal}, we let $U\colon R\to \R^2$ denote the vector field obtained by mapping conformally  the standard uniform shear flow, i.e.,
\[
U(x)\coloneqq \nabla \left(\mathrm{Re}\, F(z) \right).
\]
Note that there exist $T>0$ such that each particle in $R$ is transported via $U$ away from $R$ after time $T>0$, i.e.,
\eqnb\label{time_to_go}
\Phi^{U}(x,T) \not \in R\qquad \text{ for }x\in R, t>T.
\eqne
In order to see this, assume that $(x_0,y_0)$ belongs to the left part of the boundary $\p R$ and consider a particle trajectory $(x(t),y(t))$ of velocity field $U$, i.e.
\[
\begin{split}
\begin{pmatrix}
\dot x (t) \\ \dot y(t) 
\end{pmatrix} &= U (x(t),y(t))= \begin{pmatrix}
\p_x u(x(t),y(t))\\
\p_y u (x (t),y(t) ),
\end{pmatrix} \\
(x(0),y(0)) &= (x_0,y_0),
\end{split}
\]
where we used the notation $F(z)= u(x,y) + i v(x,y)$. Then letting $p(t)+iq(t) \coloneqq F(x(t)+iy(t))= u(x(t),y(t))+iv(x(t),y(t))$, we see that
\[
\dot p (t) = \p_x u\cdot \dot x +\p_y u \cdot \dot y = ( \p_x u )^2 + (\p_y u )^2 = |F'(z(t))|^2
.
\] 
Thus, since $F\colon R \to S$ is a conformal mapping (which can be extended to $\p R$), we have $F (z) \ne 0$ for all $z\in \overline{R}$, and so, by compactness, there exists $\eta >0$ such that $|F'|^2 \geq \eta $ in~$\overline{R}$. This shows that $(p(t),q(t))\not \in S $ for $t\geq C \eta^{-1}$, and consequently \eqref{time_to_go} holds with $T\coloneqq C \eta^{-1}$.

We can thus denote by $\psi$ the solution of the perturbed MHD system \eqref{EQ07a} on $\Omega$, around $U$ with initial condition $\psi(0)=\psi_0\coloneqq (u_0,b_0)\chi_R$,  so that $u_0=b_0=0$ in the periodic extension of~$R$. We now take $\| \psi_0 \|_{H^r}$ sufficiently small so that particle trajectories of $u(\cdot, t) + U$ remain close to the particle trajectories of $U$ for $t\in [0,T]$, so that 
\[
\Phi^{u(\cdot, t) + U}(x,T) \not \in R\qquad \text{ for }x\in R, t>T.
\]
This guarantees that $b$ has been transported away from $R$, and so, restricting $\psi (\cdot , t)$ to $\Omega'$ for $t\in [0,T]$ completes the proof of Theorem~\ref{T01} in the case $n=2$.

\subsection{Periodic conformal mapping}\label{sec_conform}
 
Here we prove Proposition~\ref{prop_conformal}. Without loss of generality, we may assume that $0\in R$, and $[-iD, iD ] \subset R$ with $-iD,iD \in \p R$.\\

We first note that, given $z_0 \in D$, the linear fractional transformation
\eqnb\label{conformal_disc}
G_{z_0}(z) \coloneqq \frac{z-z_0}{1-\overline{z_0}z}
\eqne
maps conformally $D$ onto $D$ in a way that $G_{z_0}(z_0)=0$, $G_{z_0}'(z_0)>0$, and moreover circles that are symmetric with respect to $[0,z_0]$ and pass through the inversion of $z_0$, $\mathrm{inv}\,z_0 \coloneqq \overline{z_0}^{-1}$, become parallel lines; see Figure~\ref{fig_lft}.

\includegraphics[width=0.8\textwidth]{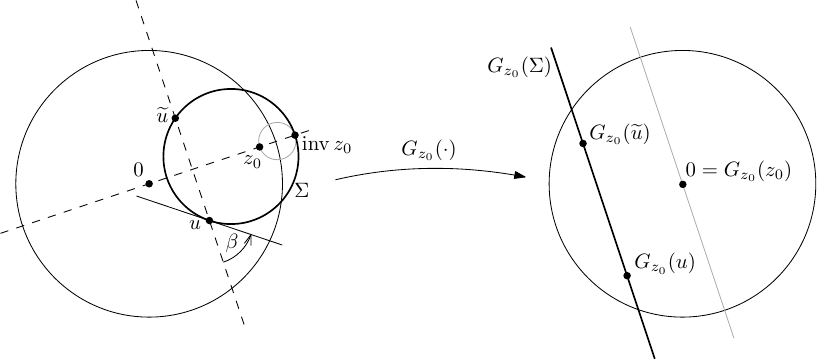}
  \captionof{figure}{A sketch of the properties of the mapping $G_{z_0}$. Note that the mapping rotates $u$ by angle $-\beta= \mathrm{Arg}\, G_{z_0}(u)$.}\label{fig_lft} 

We also recall the following.

\begin{theorem}[The Riemann Mapping Theorem]\label{thm_riemann}
Given any simply-connected $Q\subset \C$, such that $Q\ne \C$, and $z_0\in Q$, there exists a unique biholomorphic bijection $f\colon Q\to D$ such that $f(z_0)= 0$ and $f'(z_0) > 0$.
\end{theorem}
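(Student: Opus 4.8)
The plan is to prove Theorem~\ref{thm_riemann} by the classical normal–families argument; below I describe the four steps and indicate where the real work lies. \emph{Uniqueness} is immediate: if $f_1,f_2$ both satisfy the conclusion, then $\varphi \coloneqq f_2 \circ f_1^{-1} \colon D \to D$ is a biholomorphism with $\varphi(0)=0$ and $\varphi'(0)>0$, and applying the Schwarz lemma to $\varphi$ and to $\varphi^{-1}$ gives $|\varphi'(0)|=1$; the rigidity part of the Schwarz lemma then forces $\varphi(z) = \mathrm{e}^{i\theta} z$, and $\varphi'(0)>0$ yields $\varphi = \mathrm{id}$, i.e.\ $f_1=f_2$.

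\emph{A nonempty extremal family.} Let $\mathcal F$ be the set of injective holomorphic maps $g\colon Q \to D$ with $g(z_0)=0$ and $g'(z_0)>0$. First I would check $\mathcal F \neq \emptyset$: pick $a \in \C \setminus Q$; since $Q$ is simply connected and $z-a$ is nonvanishing on $Q$, it admits a holomorphic square root $h$, which is injective and satisfies $h(Q) \cap (-h(Q)) = \emptyset$, so $h(Q)$ omits an open disc about the point $-h(z_0)$. Hence $z \mapsto c\,(h(z)+h(z_0))^{-1}$ maps $Q$ injectively into a bounded set for a suitable constant $c$; composing this with a dilation, then with an automorphism of $D$ of the form \eqref{conformal_disc} carrying the image of $z_0$ to $0$, and finally with a rotation making the derivative at $z_0$ positive, yields an element of $\mathcal F$. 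Since every $g \in \mathcal F$ maps into $D$, a Cauchy estimate on a fixed ball $B(z_0,\rho) \subset Q$ gives $0 < g'(z_0) \le \rho^{-1}$, so $s \coloneqq \sup_{g \in \mathcal F} g'(z_0) \in (0,\infty)$.

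\emph{Existence of the extremal map.} Choosing $g_n \in \mathcal F$ with $g_n'(z_0) \to s$, uniform boundedness and Montel's theorem give a subsequence converging locally uniformly to a holomorphic $f\colon Q \to \overline{D}$ with $f(z_0)=0$ and $f'(z_0)=s>0$; in particular $f$ is nonconstant, so by the open mapping theorem $f(Q) \subset D$, and by Hurwitz's theorem (a locally uniform limit of injective holomorphic functions on a domain is injective or constant) $f$ is injective. Thus $f \in \mathcal F$ and it attains the supremum $s$.

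\emph{Surjectivity, the main obstacle.} It remains to show $f(Q)=D$. Suppose not, say $\alpha \in D \setminus f(Q)$; note $\alpha \neq 0$ since $0=f(z_0) \in f(Q)$. Then $G_\alpha \circ f$ omits $0$ on $Q$, and since $f(Q)$ is simply connected (being the injective holomorphic image of the simply connected $Q$) it has a holomorphic square root $w\colon Q \to D$, which is again injective. Picking $\gamma \in \R$ so that $g \coloneqq \mathrm{e}^{i\gamma}\,G_{w(z_0)} \circ w$ has positive derivative at $z_0$, we get $g \in \mathcal F$. Undoing the rotation and the automorphism $G_{w(z_0)}$, then squaring, then undoing $G_\alpha$ produces a holomorphic map $\Phi\colon D \to D$ with $\Phi(0)=0$ and $\Phi \circ g = f$; because $\Phi$ incorporates a squaring it is not injective, hence not a rotation of $D$, so the rigidity part of the Schwarz lemma gives $0 < \Phi'(0) < 1$. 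But then $s = f'(z_0) = \Phi'(0)\,g'(z_0)$ forces $g'(z_0) = s/\Phi'(0) > s$, contradicting the maximality of $s$; hence $f(Q)=D$ and $f$ is the desired biholomorphism. The two genuinely delicate points, and the ones I would write out in full, are the nonemptiness of $\mathcal F$ (which already requires the square-root construction, and so should be carried out first) and this surjectivity step, where the composite $\Phi$ must be assembled carefully as a non-automorphism self-map of $D$ fixing the origin, so that the \emph{strict} inequality in the Schwarz lemma produces $g'(z_0) > s$; everything else reduces to the Schwarz lemma together with Montel's and Hurwitz's theorems.
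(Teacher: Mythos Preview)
Your argument is the classical Koebe normal-families proof and is correct as written; the four steps (uniqueness via Schwarz, nonemptiness via the square-root trick, extraction of the extremal map via Montel and Hurwitz, and surjectivity via a second square-root construction together with the strict Schwarz inequality) are all sound, and your verification that $\Phi(0)=0$ and $\Phi'(0)\in(0,1)$ goes through. One small simplification: in the surjectivity step you do not need to pass through the simple connectedness of $f(Q)$; since $G_\alpha\circ f$ is a nonvanishing holomorphic function on the simply connected domain $Q$, it has a holomorphic square root on $Q$ directly.

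For comparison with the paper: the paper does not prove Theorem~\ref{thm_riemann} at all. It is quoted as the classical Riemann Mapping Theorem and used as a black box (together with the disc automorphisms~\eqref{conformal_disc}) in the construction of the periodic conformal map in Section~\ref{sec_conform}. So there is no ``paper's own proof'' to compare against; you have supplied a complete standard proof where the paper simply invokes the result.
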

Remark: The claim of the Riemann Mapping Theorem remains true if the condition $f'(z_0)>0$ is replaced by ${\rm e}^{-i\alpha } f'(z_0) >0$ for any fixed $\alpha \in [-\pi, \pi )$ (namely by $\mathrm{Arg}\, f'(z_0) = \alpha$).

We will call such $f$ the Riemann mapping of $Q$ with respect to $z_0$ and angle~$\alpha$.

We note that the Riemann mapping $g$ of $S$ with respect to $0$ and angle $0$ is symmetric with respect to both the real and imaginary axis, namely 
\eqnb\label{sym_riem}
\overline{g(z)} = g(\overline{z}), \qquad \text{ and }\qquad -\overline{g(z)} = g(-\overline{z})\quad \text{ for all }z\in S,
\eqne
which follows from the uniqueness part in Theorem~\ref{thm_riemann}.

As a consequence, we have the following.

\begin{corollary}\label{cor_a_mapping}
Given $z_0\in R$ and $\alpha \in \R$ there exists a unique biholomorphic bijection $F\colon R \to S$ such that $F(z_0)=0$ and ${\rm e}^{-i\alpha }F'(z_0) >0$.
\end{corollary}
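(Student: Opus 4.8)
The plan is to reduce everything to the unit disc $D$ via the Riemann Mapping Theorem. The mapping $F$ will be obtained by composing a normalized Riemann mapping $h\colon R\to D$ with the inverse of the symmetric Riemann mapping $g\colon S\to D$ of $S$ with respect to $0$ and angle $0$ (the one singled out in \eqref{sym_riem}); the normalization at $z_0$ will be read off from the angle-$\alpha$ freedom recorded in the Remark following Theorem~\ref{thm_riemann}.

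Concretely, I would first observe that both $R$ and $S$ are simply connected proper subdomains of $\C$, so Theorem~\ref{thm_riemann} and the Remark following it apply: after replacing $\alpha$ by its representative in $[-\pi,\pi)$, there is a biholomorphic bijection $h\colon R\to D$ with $h(z_0)=0$ and ${\rm e}^{-i\alpha}h'(z_0)>0$. I would then set $F\coloneqq g^{-1}\circ h\colon R\to S$ and check the normalization: $F(z_0)=g^{-1}(0)=0$ because $g(0)=0$, and, by the chain rule, $F'(z_0)=h'(z_0)/g'(0)$, so that ${\rm e}^{-i\alpha}F'(z_0)={\rm e}^{-i\alpha}h'(z_0)/g'(0)>0$ since $g'(0)>0$ by the angle-$0$ normalization of $g$. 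This settles existence.

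For uniqueness I would argue that if $F_1,F_2$ both satisfy the conclusion, then $\Psi\coloneqq F_2\circ F_1^{-1}$ is a biholomorphic automorphism of $S$ fixing $0$ with $\Psi'(0)=F_2'(z_0)/F_1'(z_0)>0$, a ratio of two complex numbers of argument $\alpha$; conjugating by $g$ turns $\Psi$ into an automorphism of $D$ fixing $0$, hence, by the Schwarz lemma, a rotation $w\mapsto{\rm e}^{i\theta}w$, and computing its derivative at $0$ forces ${\rm e}^{i\theta}=\Psi'(0)>0$, so $\theta=0$ and $\Psi=\id$, i.e.\ $F_1=F_2$. I expect the only delicate point to be this description of $\mathrm{Aut}(S)$ — transporting the automorphism problem from the strip $S$ to the disc $D$ so that the Schwarz lemma applies — but it is entirely routine, and otherwise the statement is a direct corollary of the Riemann Mapping Theorem.
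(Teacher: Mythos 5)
Your proof is correct and follows essentially the same route as the paper: compose a Riemann mapping $h\colon R\to D$ normalized at $z_0$ with the inverse of the symmetric Riemann mapping $g\colon S\to D$. The paper phrases the construction as $F=g^{-1}\circ R_{-\alpha}\circ f$, where $f$ is the angle-$0$ Riemann mapping of $R$ and $R_{-\alpha}$ a rotation of the disc, while you absorb that rotation into the angle-$\alpha$ freedom of the Riemann Mapping Theorem; these are the same thing. You are somewhat more thorough in that you verify the derivative normalization explicitly and supply the Schwarz-lemma argument for uniqueness, which the paper leaves implicit.
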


\begin{proof}
We simply use the Riemann Mapping Theorem twice, first with $R$ and $z_0$ to obtain $f\colon R \to D$ and then with $S$ and $0\in S$ to obtain $g\colon S\to D$. The required mapping is then $F\coloneqq g^{-1} \circ R_{-\alpha} \circ f \colon R\to S$.
\end{proof}

In order to prove Proposition~\ref{prop_conformal}, we now show that one can find $z_0\in R$, $\alpha \in \R$ such that the mapping in Corollary~\ref{cor_a_mapping} is periodic.

\begin{proof}[Proof of Proposition~\ref{prop_conformal}.]
We will show, in Step~1 below, that there exists $\delta\in (-D,D)$ and $\alpha \in \R$ such that the Riemann mapping $w$ with respect to $z_0 \coloneqq i\delta $ and the angle $\alpha$ satisfies $w(L+i\delta )>0$ and ${\rm e}^{-i\alpha }w'(L+i\delta )>0$. Given such $w$, we have that
\eqnb\label{conformal_what_is_F}
F= g^{-1}\circ w
\eqne
is the mapping from Corollary~\ref{cor_a_mapping}; see \eqref{sym_riem} as well as Figure~\ref{fig_periodic1} below. We will show, in Step~2 below, that such $F$ is periodic, proving Proposition~\ref{prop_conformal}.\\

\noindent\texttt{Step~1.} We construct~$w$.\\

We first denote by $f$ the Riemann mapping of $R$ with respect to $0$ and angle $0$, which maps the segment $R_0$ of $R$ between $0$ and $L$ onto a region in $D$; see Figure~\ref{fig_periodic}.

\includegraphics[width=\textwidth]{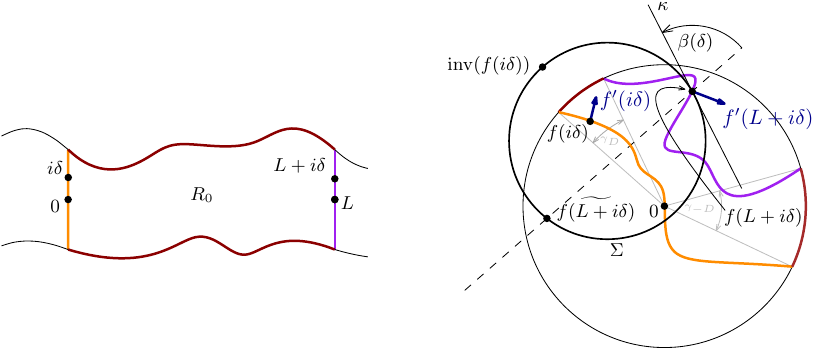}
  \captionof{figure}{A sketch of the Riemann mapping $f\colon R \to D$ with respect to $0$ and angle~$0$.}\label{fig_periodic}

We denote by $\widetilde{f(L+i\delta )}\in \C$ the symmetric image of $f(L+i\delta )$ with respect to the line $[0,f(i\delta )]$, and we denote by
\[ \Sigma \subset \C \quad \text{ the circle passing through points }\quad f(L+i\delta ), \widetilde{f(L+i\delta)}, \mathrm{inv}(f(i\delta ))
;
\]
see Figure~\ref{fig_periodic}. We also denote by $\kappa$ the tangent line to $\Sigma$ at $f(L+i\delta )$ and by $\beta = \beta (\delta ) \in (-\pi , \pi )$ the angle from the line $[f(L+i\delta ), \widetilde{f(L+i\delta )}]$ to the tangent line $\kappa$, as in Figure~\ref{fig_periodic}. 
We now consider $h\coloneqq [-D, D ] \to [-2\pi , 2\pi ]$, defined by
\[
h(\delta ) \coloneqq \beta (\delta )+ \mathrm{Arg}\,f'(L+i\delta ) - \mathrm{Arg}\, f'(i\delta )
\] 
Note that $\mathrm{Arg}\, f'(i\delta )$ is the angle that can be obtained by rotating the angle of the tangent to the curve $f(\{ i\eta \colon \eta \in [-D , D] \})$ at $f(i\delta )$ by $-\pi/2$. (Indeed, the rotation at each point is given by $\mathrm{Arg}\, f'$ and so the claim can be seen simply from the chain rule $\frac{\d }{\d \delta } f(i\delta ) = f' (i\delta ) i $, noting that $\frac{\d }{\d \delta } f(i\delta )$ is a tangent vector at $f(i\delta )$, $\delta \in [-D , D]$.)
Thus, since the theorem of Kellogg-Warschawski \cite[Theorem~3.6]{P} gives that $f$ remains conformal up to the boundary, we obtain that 
\[\mathrm{Arg}\,f'(L+iD ) - \mathrm{Arg}\, f'(iD ) = \gamma_D, 
\]
where $\gamma_D$  denotes the angle of the sector of $\p D$ which is the image of the part of $\p R$ joining $iD $ and $L+iD$, and similarly for $\delta = -D$;  see Figure~\ref{fig_periodic}.

Moreover, since for $\delta = D$ we have $\Sigma = \p D $ (and similarly at $\delta =-D$, only with the opposite orientation), the tangent-chord theorem gives that
\[
\beta (D) = \pi -\gamma_D,\qquad \beta (-D) = -\pi + \gamma_{-D}.
\] 
This shows that
\[
h(-D) = -\pi ,\qquad h(D) = \pi.
\]
Thus, since $h$ is continuous, the Darboux property implies the existence of $\delta \in (-D, D)$ such that $h(\delta )=0$. We now fix such $\delta$ and set
\[
z_0 \coloneqq i\delta .
\]

This shows that the mapping $ G_{f(z_0 )} \circ f \colon R\to D$ maps $z_0 $ onto $0$ and its derivative has the same argument at $z_0$ and at $L+z_0$. Indeed, the mapping $G_{f(z_0) } $ maps $\Sigma$ onto a line perpendicular to $[0,f(i\delta )]$, which changes the angle at $f(L+i\delta )$ by $-\beta (\delta )$, recall Figure~\ref{fig_lft} and see Figure~\ref{fig_periodic1}.

\includegraphics[width=\textwidth]{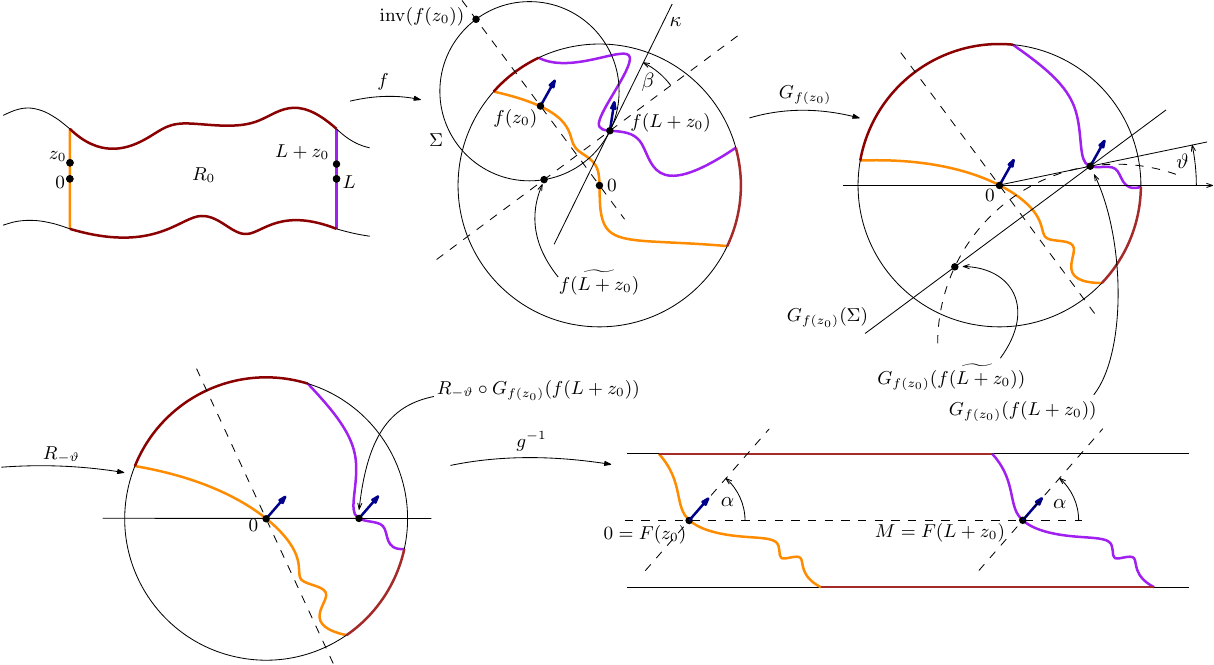}
  \captionof{figure}{A sketch of the Riemann mapping $w\colon R \to D$, and $F=g^{-1}\circ w \colon R\to D$.s}\label{fig_periodic1} 

We now let 
\[
\vartheta \coloneqq \mathrm{Arg}\, G_{f(z_0)} (f(L+z_0)  ),
\]
and $w$ can be obtained by
\[
w\coloneqq R_{-\vartheta } \circ G_{f(z_0 )} \circ f
;
\]
see Figure~\ref{fig_periodic1}.\\

\noindent\texttt{Step~2.} We show that $F$ is periodic.\\

We first note, from properties of $w$, that
\[
F(L+z_0 )>0,\qquad {\rm e}^{-i\alpha } F' (z_0+L ) >0
;
\]
see Figure~\ref{fig_periodic1}. Let 
\[
G(z) \coloneqq F(L+z)-F(L+z_0)
\]
Then $G\colon R\to S$ is biholomorphic, $G(z_0 ) =0$ and ${\rm e}^{-i\alpha } G'(z_0 ) ={\rm e}^{-i\alpha } F'(L+z_0 )$, and so the uniqueness part of Corollary~\ref{cor_a_mapping} implies that $F=G$. Equivalently $F$ is periodic with $M\coloneqq F(L+z_0)$, as required.
\end{proof}

\section*{Acknowledgments}
IK was supported in part by the
NSF grant
DMS-2205493.
This material is based upon work supported by the National Science Foundation under Grant No.~DMS-1928930 while the authors were in residence at the Simons Laufer Mathematical Sciences Institute (formerly MSRI) in Berkeley, California, during the summer of 2023. This work was partially supported by the Thematic Research Programme, University of Warsaw, Excellence Initiative Research University.



\colb
\small

\colb

\ifnum\sketches=1
\newpage
\begin{center}
  \bf   Notes?\rm
\end{center}
\huge
\colb

\fi

\end{document}